\author{Malo Jézéquel}
\address{\'Elève au DMA, ENS, 45 rue d'Ulm, 75005, Paris \\ (e-mail : malo.jezequel@ens.fr)} 
\title{Réponse linéaire et points périodiques : cas analytique.}
\newcommand{\B}{\mathcal{B}}
\renewcommand{\(}{\left(}
\renewcommand{\)}{\right)}
\newcommand{\N}{\mathbb{N}}
\newcommand{\R}{\mathbb{R}}
\newcommand{\T}{\mathbb{T}}
\newcommand{\Z}{\mathbb{Z}}
\newcommand{\C}{\mathbb{C}}
\newcommand{\A}[1]{\mathcal{A}\(#1\)}
\newcommand{\D}[2]{\mathbb{D}\left(#1,#2\right)}
\renewcommand{\L}{\mathcal{L}}
\newcommand{\tr}{\mathrm{tr}}
\newcommand{\s}[3]{\int_{#1} #2 \mathrm{d}#3}
\newcommand{\set}[1]{\left\{#1\right\}}
\newcommand{\fix}[1]{\mathrm{Fix} \( #1\)}
\newcommand{\drond}[2]{\frac{\partial #1}{\partial #2}}
\newcommand{\ddeux}[3]{\frac{\partial^2 #1}{\partial #2 \partial #3}}
\newcommand{\bul}[1]{\stackrel{\circ}{#1}}
\renewcommand{\phi}{\varphi}
\newcommand{\oh}{\hat{\otimes}}
\renewcommand{\bar}[1]{\overline{#1}}
\newtheorem{lm}{Lemme}[subsection]
\newtheorem{prop}[lm]{Proposition}
\newtheorem{df}[lm]{Définition}
\newtheorem{cor}[lm]{Corollaire}
\newtheorem{rmq}[lm]{Remarque}
\newtheorem{thm}[lm]{Théorème}
\begin{document}

\selectlanguage{english}

\begin{abstract}
We explain some points of the demonstration by Pollicott and Vytnova of a formula for linear response in terms of periodic points in the case of analytic expanding maps of the circle or Anosov diffeomorphisms of the torus. The main tool is the dynamical determinant which may be written as the Fredholm determinant of a transfer operator (or a quotient of Fredholm determinants of transfer operators).
\end{abstract}

\selectlanguage{french}

\maketitle

\section*{Introduction}

On explicite certains points peu détaillés de la démonstration d'une formule due à Pollicott et Vytnova (\cite{Poll}, sur une idée de Cvitanovic, voir \cite{Cvi}) pour la réponse linéaire, dans le cas d'une perturbation d'une application dilatante du cercle ou d'un difféomorphisme d'Anosov du tore, qui peut s'exprimer uniquement en fonction de la perturbation aux points périodiques du système initial et produit de très bonnes approximations. C'est-à-dire que si $t \mapsto f_t$ est une perturbation analytique d'une dynamique $f_0$ et $g$ une observable analytique on donne une formule pour
\[
\left. \drond{}{t} \( \s{}{g}{\mu_t} \) \right|_{t=0}
\]
où $\mu_t$ désigne la mesure physique de $f_t$. La formule dans le cas d'applications dilatantes du cercle est l'objet des corollaires \ref{derivee} et \ref{formula} et s'adapte immédiatement au cas de difféomorphismes d'Anosov du tore comme l'explique la remarque \ref{adaptation} (il s'agit du lemme 3.1 de \cite{Poll}).

L'ingrédient principal de la démonstration est le déterminant dynamique qui peut être défini formellement en dimension $1$ par
\[
z \mapsto \exp \(- \sum_{n > 0} \frac{1}{n} \( \sum_{x \in \fix{f^n}} \frac{\prod_{k=0}^{n-1} G\(f^k\(x\)\)}{\left|\(f^n\)'\(x\)-1\right|}\) z^n \)
\]
pour une dynamique $f$ et un poids $G$. Le principal apport de notre travail est l'étude de la régularité de ce déterminant lorsque l'on fait varier $f$ et $G$, justifiant ainsi certains points peu détaillés de \cite{Poll}, notamment les lemmes 2.6 et 2.13 dudit article. C'est l'objet de la proposition \ref{ppal} dans le cas d'une application dilatante du cercle et du théorème \ref{gros} dans le cas d'un difféomorphisme Anosov du cercle.

On traite dans la première partie le cas d'une application dilatante du cercle, en utilisant un déterminant dynamique dont l'usage a été introduit par Ruelle dans \cite{Ruelle1}. Le cadre géométrique particulièrement simple nous permet d'en simplifier la construction, évitant ainsi tout recours aux partitions de Markov et à la dynamique symbolique. En particulier, le déterminant ainsi construit est une fonction entière.

On traite dans la deuxième partie le cas des difféomorphismes Anosov du tore, suivant cette fois \cite{R1} et \cite{R2} pour la construction du déterminant dynamique.

Ce texte est le résultat d'un travail mené lors d'un stage de M2 de l'université d'Orsay sous la direction de Viviane Baladi. 

\tableofcontents

\section{Applications dilatantes du cercle}

La formule que l'on va donner pour la réponse linéaire se base sur l'étude d'un déterminant dynamique pour une application dilatante du cercle analytique $f$ et un poids analytique $G$, c'est-à-dire d'une fonction entière qui prolonge 
\begin{equation}\label{annonce}
z \mapsto \exp \(- \sum_{n > 0} \frac{1}{n} \( \sum_{x \in \fix{f^n}} \frac{\prod_{k=0}^{n-1} G\(f^k\(x\)\)}{\left|\(f^n\)'\(x\)-1\right|}\) z^n \)
.\end{equation}

L'étude de ce déterminant est l'objet de \S \ref{11}. On notera que dans le cas particulier que nous traitons ce déterminant peut être vu comme le déterminant de Fredholm d'un opérateur de transfert agissant sur un espace de fonctions analytiques (corollaire \ref{formule}). On évite en effet le recours à la dynamique symbolique et à la formule de Manning et la formule des résidus donne des démonstrations plus directes de plusieurs résultats (pour le cas général voir \cite{Ruelle1}).

On justifie dans \S \ref{12} que si $f_t$ est une application dilatante du cercle qui dépend de manière analytique de $t$, alors son déterminant associé au poids $\exp\(ug\)$ dépend également de manière analytique de $t$ et de $u$ (voir la proposition \ref{ppal}). La régularité du déterminant est implicitement utilisée dans \cite{Poll} et il nous semblait bon d'en donner une démonstration. 

On utilise cette étude du déterminant pour donner dans \S \ref{13} la formule pour la réponse linéaire annoncée, rappelant le résultat principal de \cite{Poll}. C'est-à-dire que si $g$ est une fonction analytique du cercle dans $\R$ et $\mu_t$ désigne la mesure invariante pour $f_t$ absolument continue devant Lebesgue, on donne dans les corollaires \ref{derivee} et \ref{formula} des formules pour
\[
\drond{}{t} \left. \( \s{S^1}{g}{\mu_t} \) \right|_{t=0}
.\]

Enfin, on donne dans \S \ref{14} une estimation de la vitesse à laquelle les coefficients du développement en série entière du déterminant tendent vers $0$ (proposition \ref{vitconv}). On donne ainsi une démonstration du lemme 2.6 de \cite {Poll}, ce qui montre que la formule énoncée dans le corollaire \ref{formula} donne lieu à de très bonnes approximations.

\subsection{Déterminant dynamique d'une application dilatante du cercle}\label{11}

Nous justifions ici l'existence du déterminant dynamique par une version simplifiée de l'argument de \cite{Ruelle1}. Cette simplification est permise par le cadre géométrique simple et assure que le déterminant dynamique est une application entière.

Commençons par fixer quelques notations. On note $\pi$ la surjection canonique de $\C$ sur $\C / \Z$ et $\psi$ le $\mathcal{C}^{\omega}$-difféomorphisme entre $\C / \Z$ et $\C^*$ obtenu par factorisation de $ x \mapsto \exp\(2i \pi x\)$. On voit $S^1 = \R / \Z$ comme une partie de $\C / \Z$. Pour tout réel $r$ strictement positif on note $\mathcal{C}_r = \set{ z \in \C / \Z : |\Im z| < r}$.

On se donne une application $f$ de $S^1$ dans $S^1$ analytique et uniformément dilatante : il existe $\lambda > 1$ tel que pour tout $x \in S^1$ on ait $f'\(x\) \geqslant \lambda$. On étend $f$ en une application holomorphe sur un voisinage $U$ de $S^1$ dans $\C / \Z$ de telle manière qu'on ait toujours $f'\(z\) \neq 0$ pour $z \in U$.

On note $F$ le relevé de $f$ qui est défini sur $\pi^{-1} U$ à valeurs dans $\C$. On pose aussi $\tilde{f} = \psi \circ f \circ \psi^{-1} $ (définie sur $\psi U$). Ce recours à différents modèles du cercle encombre légèrement les notations mais simplifie grandement un certain nombre de preuves.

Quelques résultats techniques sont nécessaires avant de pouvoir définir l'opérateur de transfert que l'on va étudier.

\begin{lm}\label{tech}
Soit $r$ un réel strictement positif tel que $\mathcal{C}_r \subseteq U$. On note $M = \underset{\substack{\Re z \in \left[ - \frac{r}{2} , 1 + \frac{r}{2} \right] , \\ \Im z \in \left[ - \frac{r}{2} , \frac{r}{2} \right]} }{\sup} |F\(z\)|$. Soit $0 < \lambda' < \lambda$. Alors pour tout $z \in \C / \Z $ tel que $|\Im z | <  \min\(\frac{r}{4}, \frac{r^2\(\lambda - \lambda'\)}{8M}\) $ on a $|\Im f\(z\)| \geqslant \lambda' |\Im z|$.
\end{lm}

\begin{proof}
Soit $x +iy$ un relevé de $z$ tel que $x \in \left[0,1\right]$ et $y = \Im z$. On a alors :
\[
F\(x+iy\) = F\(x\) + iyF'\(x\) + \sum_{n \geqslant 2} a_n \(iy\)^n
,\] 
où pour $n \geqslant 2$ on a posé $a_n = \frac{1}{2i \pi} \s{\partial \D{x}{\frac{r}{2}}}{\frac{F\(w\)}{\(w-x\)^{n+1}}}{w}$
de telle manière que
$|a_n| \leqslant \frac{2^n M}{r^n}$
et donc
\begin{align*}
\left| \sum_{n \geqslant 2} a_n \(iy\)^n \right| & \leqslant M \sum_{n \geqslant 2} \(\frac{2|y|}{r}\)^n \\
              & \leqslant M \(\frac{2|y|}{r}\)^2 \frac{1}{1- \frac{2|y|}{r}} \leqslant 8M \(\frac{|y|}{r}\)^2 \\
              & \leqslant \(\lambda - \lambda'\) |y|
,\end{align*}
et il vient alors
\begin{align*}
|\Im f\(z\)| & = |\Im F\(x+iy\)| \geqslant |y||F'\(x\)| - \left| \sum_{n \geqslant 2} a_n \(iy\)^n \right| \\
             & \geqslant \lambda|y| - \(\lambda -\lambda'\) |y| \geqslant \lambda' |\Im z|
.\end{align*}
\end{proof}

Remarquons que $f$ induit un revêtement du cercle par lui-même (c'est un homéomorphisme local propre). On note $D$ son nombre de feuillets (qui est aussi son degré et est fini). Pour tout réel strictement positif $r$ on note $\mathbb{A}_r = \psi \mathcal{C}_r = \set{z \in \C : e^{-2 \pi r} < |z| < e^{2 \pi r}}$.

\begin{lm}\label{antec}
Soit $0 < r_0 < \min\(\frac{r}{4}, \frac{r^2\(\lambda - 1\)}{8M}\)$ avec les notations du lemme précédent. Alors $\mathcal{C}_{r_0} \subseteq U$ et tout élément de $\mathcal{C}_{r_0}$ a exactement $D$ antécédents par $f$ dans $\mathcal{C}_{r_0}$.
\end{lm}

\begin{proof}
On a $r < r_0$ et donc $\mathcal{C}_{r_0} \subseteq U$. Donnons nous $\lambda'$ tel que $1 < \lambda' < \lambda$ et $ r_0 < \frac{r^2\(\lambda - \lambda'\)}{8M}$. Maintenant si $w \in \mathcal{C}_{r_0}$ le nombre d'antécédents de $w$ par $f$ dans $\mathcal{C}_{r_0}$ est le même que le nombre d'antécédents de $\psi\(w\)$ par $\tilde{f}$ dans $\mathbb{A}_{r_0}$ c'est-à-dire :
\[
\frac{1}{2 i \pi} \s{\partial \mathbb{A}_{r_0}}{\frac{\tilde{f}'\(z\)}{\tilde{f} \(z\) - \psi\(w\)}}{z}
.\]
Or cette fonction est continue en $w$ puisqu'on a la domination (avec le lemme précédent) :
\[
\left| \frac{\tilde{f}'\(z\)}{\tilde{f} \(z\) - \psi\(w\)} \right| \leqslant \frac{\sup_{\partial \mathbb{A}_{r_0}} |\tilde{f}'|}{ 2 \(\sinh\(\lambda' r_0\) - \sinh\(r_0\)\)}
\]
pour tout $z \in \partial \mathbb{A}_{r_0}$ et tout $w \in \mathcal{C}_{r_0}$.
Par connexité, les éléments de $\mathcal{C}_{r_0}$ ont donc tous le même nombre d'antécédents dans $\mathcal{C}_{r_0}$. Or un point du cercle a $D$ antécédents sur le cercle et aucun autre dans $\mathcal{C}_{r_0}$ (par le lemme précédent).
\end{proof}

\begin{lm}\label{rev}
Avec les notations du lemme précédent, $f$ induit un revêtement holomorphe à $D$ feuillets de $\mathcal{C}_{r_0} \cap f^{-1} \mathcal{C}_{r_0}$ sur $\mathcal{C}_{r_0}$. En particulier, $\mathcal{C}_{r_0} \cap f^{-1} \mathcal{C}_{r_0}$  est connexe (par arcs). 
\end{lm}

\begin{proof}
Un homéomorphisme local à fibres finies de cardinal constant est un revêtement. Tout élément de $\mathcal{C}_{r_0}$ est dans la même composante connexe (par arcs) que le cercle et donc par relèvement, en remarquant que $f^{-1}S^1 = S^1$, tout élément de $\mathcal{C}_{r_0} \cap f^{-1} \mathcal{C}_{r_0}$ aussi.
\end{proof}

Dans toute la suite on fixe un tel $r_0$ et $R < r_0$. On se donne une fonction holomorphe $G$ d'un voisinage de  $\bar{\mathcal{C}_R}$ dans $\C$ et on note $\widetilde{G} = G \circ\psi^{-1}$. Nous pouvons maintenant définir l'opérateur de transfert qui nous intéresse ici.

\begin{df}
Si $U$ est un ouvert de $\C$ ou de $\C / \Z$, on note $\A{U}$ l'espace des fonctions continues bornées sur $\bar{U}$ à valeurs dans $\C$ dont la restriction à $U$ est holomorphe. On munit $\A{U}$ de la norme du sup.
\end{df}

\begin{df}
Si $\phi \in \A{\mathcal{C}_R}$ et $z \in \bar{\mathcal{C}_R}$ on pose 
\[
\L \phi \(z\) = \sum_{x \in \mathcal{C}_R : f\(x\) = z} G\(x\) \phi\(x\)
.\]
De même si $\phi \in \A{\mathbb{A}_R}$ et $z \in \bar{\mathbb{A}_R}$ on pose 
\[
\tilde{\L} \phi \(z\) = \sum_{x \in \mathbb{A}_R : \tilde{f} \(x\) = z} \widetilde{G} \(x\) \phi\(x\)
,\]
où $\tilde{f} = \psi \circ f \circ \psi^{-1}$ et $\widetilde{G}= G \circ \psi^{-1}$ comme ci-dessus.
\end{df}

\begin{rmq}
Les antécédents qui apparaissent dans la première somme sont en fait dans $\bar{\mathcal{C}_{\frac{R}{\lambda'}}}$ (et ceux de la deuxième dans $\bar{\mathbb{A}_{\frac{R}{\lambda'}}}$) où $\lambda'$ est défini comme dans la démonstration du lemme \ref{antec}.
\end{rmq}

\begin{prop}\label{defin}
$\L$ et $\tilde{\L}$ définissent des opérateurs bornés respectivement de $\A{\mathcal{C}_R}$ dans lui-même et de $\A{\mathbb{A}_R}$ dans lui-même. De plus, l'application $\phi \mapsto \phi \circ \psi^{-1}$ définit une isométrie entre les Banach $\A{\mathcal{C}_R}$ et $\A{\mathbb{A}_R}$ qui conjugue $\L$ et $\tilde{\L}$.
\end{prop}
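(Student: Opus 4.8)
Le plan est de traiter séparément les trois assertions en remarquant que l'isométrie et la conjugaison ramènent le caractère borné de $\tilde{\L}$ à celui de $\L$, de sorte que le seul véritable travail analytique porte sur $\L$. On commencerait donc par établir que $\L$ envoie $\A{\mathcal{C}_R}$ dans lui-même et y est borné. Soit $\phi \in \A{\mathcal{C}_R}$ et $z \in \bar{\mathcal{C}_R}$. Par le lemme \ref{antec}, $z$ possède exactement $D$ antécédents par $f$ dans $\mathcal{C}_{r_0}$, et, comme on l'a noté dans la remarque suivant la définition de $\L$ (conséquence directe du lemme \ref{tech}), ces antécédents sont tous dans $\bar{\mathcal{C}_{\frac{R}{\lambda'}}}$, donc dans l'ouvert $\mathcal{C}_R$ puisque $\frac{R}{\lambda'} < R$. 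La somme définissant $\L \phi \(z\)$ ne comporte ainsi qu'exactement $D$ termes, tous évalués en des points de $\bar{\mathcal{C}_{\frac{R}{\lambda'}}}$, ce qui donne aussitôt la domination
\[
|\L \phi \(z\)| \leqslant D \( \sup_{\bar{\mathcal{C}_{\frac{R}{\lambda'}}}} |G| \) \|\phi\|_{\infty}
,\]
le majorant étant fini car $G$ est continue sur le compact $\bar{\mathcal{C}_{\frac{R}{\lambda'}}}$. On en déduira la bornitude une fois acquise l'appartenance $\L \phi \in \A{\mathcal{C}_R}$.

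Pour cette appartenance, c'est la structure de revêtement du lemme \ref{rev} qui serait l'outil clé. Au voisinage de tout point de $\bar{\mathcal{C}_R}$ (compact dans l'ouvert $\mathcal{C}_{r_0}$ car $R < r_0$), on dispose de $D$ branches inverses holomorphes $s_1, \dots, s_D$ de $f$, à valeurs dans $\bar{\mathcal{C}_{\frac{R}{\lambda'}}} \subseteq \mathcal{C}_R$, et localement $\L \phi = \sum_{j=1}^D \(G \circ s_j\) \(\phi \circ s_j\)$ est une somme finie de fonctions holomorphes (chaque $s_j$ arrive là où $\phi$ est holomorphe), donc holomorphe ; la monodromie se contente de permuter les branches et laisse la somme inchangée, si bien que $\L \phi$ est holomorphe sur $\mathcal{C}_R$. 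Pour la continuité jusqu'au bord, on observerait que pour $z$ de partie imaginaire $< R'$ avec $R < R' < \min\(\lambda' R, r_0\)$, les antécédents $x$ de $z$ vérifient $|\Im x| \leqslant \frac{|\Im z|}{\lambda'} < \frac{R'}{\lambda'} < R$ par le lemme \ref{tech}, et restent donc dans $\mathcal{C}_R$ ; le même argument montre alors que $\L \phi$ est en fait holomorphe sur le voisinage $\mathcal{C}_{R'}$ de $\bar{\mathcal{C}_R}$, d'où $\L \phi \in \A{\mathcal{C}_R}$ et, finalement, la bornitude de $\L$.

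Il resterait à traiter l'isométrie et la conjugaison, ce qui est essentiellement formel. Comme $\psi$ est un biholomorphisme de $\mathcal{C}_R$ sur $\mathbb{A}_R$ se prolongeant en un homéomorphisme des adhérences, l'application $J : \phi \mapsto \phi \circ \psi^{-1}$ préserve la continuité et la bornitude sur l'adhérence ainsi que l'holomorphie à l'intérieur, et l'on a $\sup_{\bar{\mathbb{A}_R}} |\phi \circ \psi^{-1}| = \sup_{\bar{\mathcal{C}_R}} |\phi|$ puisque $\psi^{-1}$ est une bijection $\bar{\mathbb{A}_R} \to \bar{\mathcal{C}_R}$ ; ainsi $J$ est une isométrie linéaire entre $\A{\mathcal{C}_R}$ et $\A{\mathbb{A}_R}$, d'inverse $\eta \mapsto \eta \circ \psi$. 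La conjugaison s'obtiendrait par le changement de variable $x = \psi^{-1}\(y\)$ dans la somme définissant $\tilde{\L}$ : pour $\eta = \phi \circ \psi^{-1}$ et $w \in \bar{\mathbb{A}_R}$, la relation $\tilde{f}\(y\) = w$ équivaut à $f\(\psi^{-1}\(y\)\) = \psi^{-1}\(w\)$, et comme $\widetilde{G} = G \circ \psi^{-1}$ on trouverait $\tilde{\L} \( \phi \circ \psi^{-1}\) = \(\L \phi\) \circ \psi^{-1}$, c'est-à-dire $\tilde{\L} \circ J = J \circ \L$. Le caractère borné de $\tilde{\L}$ résulterait alors de celui de $\L$ via $\tilde{\L} = J \circ \L \circ J^{-1}$.

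La partie délicate sera le deuxième paragraphe, c'est-à-dire la vérification que $\L \phi$ est holomorphe et que la somme ne comporte qu'un nombre fini ($D$) de termes dont les arguments restent strictement à l'intérieur de $\mathcal{C}_R$ (là où $\phi$ est holomorphe) ; c'est exactement là que servent les lemmes \ref{tech}, \ref{antec} et \ref{rev}. Tout le reste — majoration ponctuelle, isométrie et intertwinement — est de nature purement formelle.
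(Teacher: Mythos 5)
Your proof is correct and takes essentially the same approach as the paper: the paper's entire proof is the one-line remark that the non-trivial part (namely that $\L\phi$ lies in $\A{\mathcal{C}_R}$) is an immediate consequence of the covering lemma \ref{rev}, and your argument is precisely a detailed unfolding of that remark — local holomorphic inverse branches coming from the covering, confinement of the $D$ preimages to $\bar{\mathcal{C}_{\frac{R}{\lambda'}}}$ via Lemma \ref{tech} and Lemma \ref{antec}, and the purely formal isometry and conjugation by $\phi \mapsto \phi \circ \psi^{-1}$. Nothing to correct.
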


\begin{proof}
Ce qui n'est pas trivial est une conséquence immédiate du lemme \ref{rev}.
\end{proof}

Si $n$ est un entier naturel et $x \in \bar{\mathcal{C}_R}$, on note $A_n\(x\)$ l'ensemble des antécédents $y$ de $x$ par $f^n$ tels que $y,\dots, f^{n-1}\(y\)$ sont dans $\bar{\mathcal{C}_R}$. Si $y \in \cap_{k=0}^{n-1} f^{-k} V $, où $V$ est le domaine de $G$, on note $G_n\(y\) = \prod_{k = 0}^{n-1} G \(f^k \(y\)\)$.

Si $\phi \in \A{\mathcal{C}_R}$ et $z \in \bar{\mathcal{C}_R}$ on a alors 
\[
\L^n \phi \(x\) = \sum_{y \in A_n\(x\) } G_n\(y\) \phi\(y\)
.\]

De même si $\phi \in \A{\mathbb{A}_R}$ et $z \in \bar{\mathbb{A}_R}$ on a
\[
\tilde{\L}^n \phi \(x\) = \sum_{y \in \tilde{A_n}\(x\)} \widetilde{G}_n \(y\) \phi\(y\)
\]
où on a posé $\tilde{G}_n = G_n \circ \psi^{-1}$ et $\tilde{A_n}\(x\) = \psi\(A_n\(\psi^{-1}\(x\)\)\)$  pour $x \in \mathbb{A}_R$.

\begin{rmq}\label{cont}
Si $x \in \mathcal{C}_R$, les éléments de $A_n\(x\)$ sont dans $\mathcal{C}_{\frac{R}{\lambda'^n}}$ où $\lambda'$ est défini comme dans la démonstration du lemme \ref{antec}.
\end{rmq}

Les résultats qui suivent constituent l'essentiel du contenu de cette partie : ils justifient l'existence du déterminant dynamique et en donnent une expression en fonction de l'opérateur de transfert $\L$. Pour des raisons techniques, on traite l'essentiel de la démonstration en utilisant plutôt l'opérateur $\tilde{\L}$. Nous allons travailler avec des opérateurs nucléaires au sens de Grothendieck (voir \cite{Groth}). Notons que les opérateurs nucléaires que nous étudions ici sont systématiquement $\frac{2}{3}$-sommables et ont donc une trace et un déterminant bien définis (voir le corollaire 4 page 18 de la deuxième partie de \cite{Groth}).

\begin{lm}\label{calc}
L'opérateur $\tilde{\L}$ est $\frac{2}{3}$-sommable et pour tout entier naturel non-nul $n$ on a 
\[
\tr\(\tilde{\L}^n\) = \sum_{x \in \fix{\tilde{f}^n}} \frac{\(\tilde{f}^n\)'\(x\) \widetilde{G}_n\(x\)}{\(\tilde{f}^n\)'\(x\) - 1}
.\]
\end{lm}

\begin{proof}
Si $m \in \Z$ et $n$ est un entier naturel non-nul, on note $\phi_{m,n}$ l'élément de $\A{\mathbb{A}_R}$ (qui s'étend en une fonction holomorphe sur un voisinage de $\bar{\mathbb{A}_R}$) défini par
\[
\phi_{m,n} \(z\) = \sum_{x \in \tilde{A_n}\(z\)} \widetilde{G}_n \(x\) x^m
\]
pour $z \in \mathbb{A}_R$, et la remarque \ref{cont} implique que
\[
\|\phi_{m,n}\| \leqslant D^n \|\widetilde{G}_n\|_{\infty} \( \exp \( \frac{R}{\lambda'^n} \) \)^{|m|}
.\]

Si $m \in \Z$ on note $l_m$ la forme linéaire sur $\A{\mathbb{A}_R}$ définie par
\[
l_m\(\phi\) = \frac{1}{2 i \pi} \s{\psi S^1}{\frac{\phi\(z\)}{z^{m+1}}}{z}
\]
pour $\phi \in \A{\mathbb{A}_R}$ et on remarque que 
\[
\|l_m\| \leqslant e^{-|m|R}
\]
(par la formule de Cauchy, on peut calculer ces intégrales sur des cercles tendant vers les cercles extérieurs). De ces estimations, il découle que pour tout entier naturel non-nul $n$ :
\[
\sum_{m \in \Z} \|l_m\|^{\frac{2}{3}} \|\phi_{m,n}\|^{\frac{2}{3}} < + \infty
\]
et donc la série $\sum_{m \in \Z} l_m \otimes \phi_{m,n}$ converge vers un noyau $\frac{2}{3}$-sommable dans $\A{\mathbb{A}_R}' \oh \A{\mathbb{A}_R}$ . Le lemme précédent et la formule donnant le développement en série de Laurent sur $\mathbb{A}_R$ des éléments de $\A{\mathbb{A}_R}$ assurent que ce noyau représente $\tilde{\L^n}$.
En particulier, $\tilde{\L}$ est $\frac{2}{3}$-sommable. 

De la formule précédente on déduit que pour $n$ entier naturel non-nul on a :
\[
\tr\(\tilde{\L}^n\) = \sum_{m \in \Z} l_m \( \phi_{m,n} \) \\
.\]
Or 
\begin{align*}
\sum_{m \geqslant 0} l_m \( \phi_{m,n} \) & = \sum_{m \geqslant 0} \frac{1}{2i \pi} \s{\psi S^1}{\frac{\phi_{m,n}\(z\)}{z^{m+1}}}{z} \\
        & =  \sum_{m \geqslant 0} \frac{1}{2i \pi} \s{\partial \D{0}{e^R} }{\frac{\phi_{m,n}\(z\)}{z^{m+1}}}{z} \textrm{, par la formule de Cauchy} \\
        & = \sum_{m \geqslant 0} \frac{1}{2i \pi} \s{\partial \D{0}{e^R} }{\frac{\sum_{x \in \tilde{A_n}\(z\)} \widetilde{G}_n\(x\) x^m }{z^{m+1}}}{z}
.\end{align*}
On peut inverser les sommes dans ce dernier terme puisque si $z \in \D{0}{e^R}$ et $x \in \tilde{A_n}\(z\)$ alors $\left|\frac{x^m}{z^{m+1}} \right| \leqslant e^{-R} \( \exp\( m \(\frac{R}{\lambda'^n} - R \)\) \)$ et on a donc convergence normale d'où
\begin{align*}
\sum_{m \geqslant 0} l_m \( \phi_{m,n} \) & = \frac{1}{2i \pi} \s{\partial \D{0}{e^R} }{\sum_{x \in \tilde{A_n}\(z\)} \widetilde{G}_n \(x\) \sum_{m \geqslant 0} \frac{x^m}{z^{m+1}}}{z} \\
     & =\frac{1}{2i \pi} \s{\partial \D{0}{e^R} }{\sum_{x \in \tilde{A_n}\(z\)} \frac{\widetilde{G}_n\(x\)}{z-x}}{z}
.\end{align*}

De la même manière on montre que
\begin{align*}
\sum_{m < 0} l_m \( \phi_{m,n} \) & = \sum_{m < 0} \frac{1}{2 i\pi} \s{\partial \D{0}{e^{-R}} }{\frac{\sum_{x \in \tilde{A_n}\(z\)} \widetilde{G}_n\(x\) x^m }{z^{m+1}}}{z} \\
           & = \frac{1}{2i \pi} \s{\partial \D{0}{e^{-R}} }{\sum_{x \in \tilde{A_n}\(z\)} \widetilde{G}_n \(x\) \sum_{m < 0} \frac{x^m}{z^{m+1}}}{z} \\
           & = - \frac{1}{2i \pi} \s{\partial \D{0}{e^{-R}} }{\sum_{x \in \tilde{A_n}\(z\)} \frac{\widetilde{G}_n\(x\)}{z-x}}{z}
.\end{align*}
Et on a donc 
\[
\tr\(\tilde{\L}^n\) = \frac{1}{2 i \pi} \s{\partial \mathbb{A}_R}{\sum_{x \in \tilde{A_n}\(z\)} \frac{\widetilde{G}_n\(x\)}{z-x}}{z}
.\]

La trace $\tr\(\tilde{\L}^n\)$ est donc la somme des résidus dans $\mathbb{A}_R$ de la forme que l'on intègre. Or celle-ci n'a de pôles qu'en les points de $\fix{\tilde{f}^n}$. Si $x \in \fix{\tilde{f}^n}$ alors l'intégrande de la formule précédente s'écrit comme une somme de $D^n - 1$ fonctions holomorphes et une fonction méromorphe au voisinage de $x$ cette dernière étant $ z \mapsto \frac{\tilde{G}_n\(h\(x\)\)}{z - h\(z\)}$ où $h$ désigne un inverse local de $f^n$ au voisinage de $x$ qui envoie $x$ sur $x$. Le résidu de l'intégrande en $x$ est le même que celui de cette fonction qui n'est autre que :
\[
\frac{\widetilde{G}_n\(x\)}{1-h'\(x\)} = \frac{\(\tilde{f}^n\)' \(x\) \widetilde{G}_n\(x\)}{\(\tilde{f}^n\)'\(x\) -1}
.\]
On en déduit la formule annoncée :
\[
\tr\(\tilde{\L}^n\) = \sum_{x \in \fix{\tilde{f}^n}} \frac{\(\tilde{f}^n\)'\(x\) \widetilde{G}_n\(x\)}{\(\tilde{f}^n\)'\(x\) - 1}
.\]
\end{proof}

\begin{cor}
L'opérateur $\L$ est nucléaire et pour tout entier naturel non-nul $n$ on a 
\[
\tr\(\L^n\) = \sum_{x \in \fix{f^n}} \frac{\(f^n\)'\(x\) G_n\(x\)}{\(f^n\)'\(x\) - 1}
.\]
\end{cor}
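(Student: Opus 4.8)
Le plan est de transporter le lemme \ref{calc} de $\tilde{\L}$ vers $\L$ grâce à la conjugaison isométrique de la proposition \ref{defin}. Je noterais $S$ l'isométrie $\phi \mapsto \phi \circ \psi^{-1}$ de $\A{\mathcal{C}_R}$ sur $\A{\mathbb{A}_R}$, de sorte que $\tilde{\L} = S \L S^{-1}$. Comme la nucléarité (et même la $\frac{2}{3}$-sommabilité au sens de Grothendieck) ainsi que la trace sont invariantes par conjugaison par un isomorphisme de Banach, il suffira d'établir ces deux invariances puis de réécrire la formule du lemme \ref{calc} en termes de $f$ et $G$.

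Plus précisément, je commencerais par justifier que $\L$ est nucléaire. Si $\tilde{\L} = \sum_{m} l_m \otimes \phi_m$ est la représentation $\frac{2}{3}$-sommable construite dans la preuve du lemme \ref{calc}, alors $\sum_m \(l_m \circ S\) \otimes \(S^{-1} \phi_m\)$ est un noyau de $\A{\mathcal{C}_R}' \oh \A{\mathcal{C}_R}$ qui représente $\L = S^{-1} \tilde{\L} S$. Comme $S$ est isométrique, on a $\|l_m \circ S\| = \|l_m\|$ et $\|S^{-1} \phi_m\| = \|\phi_m\|$, si bien que ce noyau reste $\frac{2}{3}$-sommable. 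L'invariance de la trace sous conjugaison, appliquée à $\tilde{\L}^n = S \L^n S^{-1}$, donnerait alors $\tr\(\L^n\) = \tr\(\tilde{\L}^n\)$ pour tout entier naturel non-nul $n$.

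Il resterait à transformer la formule fournie par le lemme \ref{calc}. Le changement de variable $y = \psi^{-1}\(x\)$ réalise une bijection de $\fix{\tilde{f}^n}$ sur $\fix{f^n}$, puisque $\tilde{f}^n = \psi \circ f^n \circ \psi^{-1}$. En dérivant cette dernière relation en un point fixe et en utilisant que $f^n\(y\) = y$, les facteurs $\psi'$ et $\(\psi^{-1}\)'$ se compensent et l'on obtient $\(\tilde{f}^n\)'\(x\) = \(f^n\)'\(y\)$ ; de plus $\widetilde{G}_n\(x\) = G_n\(y\)$ par définition de $\widetilde{G}_n$. Chaque terme de la somme se transforme donc exactement en le terme voulu, ce qui conclut.

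Aucune de ces étapes ne présente de réelle difficulté, l'essentiel du travail ayant été fait dans le lemme \ref{calc}. Le seul point demandant un peu de soin est la vérification que la structure nucléaire au sens de Grothendieck est effectivement préservée par la conjugaison, ce qui découle de la transformation explicite de la représentation tensorielle décrite ci-dessus et du fait que $S$ est une isométrie.
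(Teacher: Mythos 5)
Votre démonstration est correcte et suit essentiellement la même voie que celle de l'article : celui-ci invoque aussi la bijection induite par $\psi$ entre $\fix{f^n}$ et $\fix{\tilde{f}^n}$ et l'égalité $\(f^n\)'\(x\) = \(\tilde{f}^n\)'\(\psi\(x\)\)$ aux points fixes, en laissant implicite le transfert de la $\frac{2}{3}$-sommabilité et de la trace par la conjugaison isométrique de la proposition \ref{defin}. Vous ne faites qu'expliciter ce transfert (réécriture du noyau $\sum_m \(l_m \circ S\) \otimes \(S^{-1} \phi_m\)$ et invariance de la trace), ce qui est un détail que l'article omet mais qui ne change pas l'argument.
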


\begin{proof}
Il suffit de remarquer que $\psi$ induit une bijection entre $\fix{f^n}$ et $\fix{\tilde{f}^n}$ et que si $x \in \fix{f^n}$ alors $\(f^n\)'\(x\) = \(\tilde{f}^n\)'\(\psi\(x\)\)$.
\end{proof}

\begin{cor}\label{formule}
Si $\rho$ désigne le rayon spectral de $\L$ alors pour tout $z \in \C$ tel que $|z| < \frac{1}{\rho}$ on a :
\[
\det \(I - z \L\) = \exp \( - \sum_{n = 1}^{+ \infty} \frac{z^n}{n} \sum_{x \in \fix{f^n}} \frac{\(f^n\)'\(x\) G_n\(x\)}{\(f^n\)'\(x\) - 1} \)
.\]
En particulier, la série en argument de l'exponentielle converge.
\end{cor}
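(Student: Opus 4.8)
The plan is to pass through the eigenvalues of $\L$ and connect the Fredholm determinant to the traces $\tr\(\L^n\)$ already computed. Since $\L$ is nuclear, and in fact $\frac{2}{3}$-summable, Grothendieck's theory (\cite{Groth}) furnishes an at most countable family of nonzero eigenvalues $\(\alpha_j\)_j$, counted with algebraic multiplicity, such that $\sum_j |\alpha_j| < + \infty$, the Fredholm determinant factorises as
\[
\det\(I - z\L\) = \prod_j \(1 - z\alpha_j\)
\]
for every $z \in \C$ (the product converging absolutely since $\sum_j |z\alpha_j| < +\infty$), and the traces are given by $\tr\(\L^n\) = \sum_j \alpha_j^n$ for each $n \geqslant 1$. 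The spectral radius is then $\rho = \sup_j |\alpha_j|$, with the convention $\rho = 0$ when there are no nonzero eigenvalues, a degenerate case in which $\det\(I - z\L\) = 1$ and every $\tr\(\L^n\)$ vanishes, so that the identity is immediate.

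First I would fix $z$ with $|z| < \frac{1}{\rho}$, so that $|z\alpha_j| \leqslant |z|\rho < 1$ uniformly in $j$; in particular no factor $1 - z\alpha_j$ vanishes, $\det\(I - z\L\) \neq 0$, and each $1 - z\alpha_j$ lies in the domain of the principal branch of the logarithm, with
\[
\log\(1 - z\alpha_j\) = - \sum_{n=1}^{+\infty} \frac{\(z\alpha_j\)^n}{n}
.\]
Summing over $j$ and exchanging the two summations would give
\[
\log \det\(I - z\L\) = \sum_j \log\(1 - z\alpha_j\) = - \sum_{n=1}^{+\infty} \frac{z^n}{n} \sum_j \alpha_j^n = - \sum_{n=1}^{+\infty} \frac{z^n}{n} \tr\(\L^n\)
,\]
where the first equality holds because the series of principal-branch logarithms converges to a holomorphic logarithm of $\det\(I - z\L\)$ vanishing at the origin. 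Exponentiating, together with the trace formula of the preceding corollary, would then yield the announced identity, and the same estimate shows that the series in the exponent converges.

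The one point requiring genuine care is the interchange of the two summations, and this is exactly where the \emph{strict} inequality $|z| < \frac{1}{\rho}$ is used. For $0 \leqslant t \leqslant q < 1$ one has $-\log(1-t) \leqslant \frac{t}{1-q}$, whence, with $q = |z|\rho$,
\[
\sum_j \sum_{n=1}^{+\infty} \frac{|z\alpha_j|^n}{n} = \sum_j \( - \log\(1 - |z\alpha_j|\) \) \leqslant \frac{|z|}{1 - |z|\rho} \sum_j |\alpha_j| < + \infty
,\]
the finiteness coming from the summability $\sum_j |\alpha_j| < + \infty$. This absolute convergence legitimates the Fubini interchange above and, as noted, bounds the exponent uniformly on compact subsets of the disc. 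As an alternative that sidesteps the eigenvalue–trace identity, one could argue that $z \mapsto \det\(I - z\L\)$ is entire with logarithmic derivative $-\tr\( \(I - z\L\)^{-1} \L \)$, expand the resolvent as the Neumann series $\sum_{n \geqslant 0} z^n \L^n$ valid for $|z| < \frac{1}{\rho}$, use continuity of the trace to obtain $-\sum_{n \geqslant 1} z^{n-1} \tr\(\L^n\)$, and integrate from the origin; the delicate step there is once again the term-by-term application of the trace, which rests on the very same summability.
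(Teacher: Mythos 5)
Your proposal is correct and coincides with what the paper implicitly relies on: the paper states this corollary without proof, treating it as the standard consequence of Grothendieck's theory for $\frac{2}{3}$-sommable operators (the determinant--trace identity, via the eigenvalue factorisation of the Fredholm determinant and the Lidskii-type identity $\tr\(\L^n\) = \sum_j \alpha_j^n$, exactly the facts covered by the reference to \cite{Groth} given before le lemme \ref{calc}). Your write-up simply makes that standard argument explicit, including the absolute-convergence justification of the Fubini interchange, and is sound.
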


\begin{rmq}
On retrouve le déterminant de la formule \eqref{annonce} en changeant $G$ en $\frac{G}{f'}$.
\end{rmq}

\subsection{Cas d'une courbe d'applications}\label{12}

On explore maintenant la façon dont les résultats de \S \ref{11} se comportent lorsqu'on effectue une perturbation analytique sur $f$.

On se donne une courbe d'applications analytiques de $S^1$ sur lui-même, $t \in \left]- \epsilon , \epsilon \right[ \mapsto f_t$, où $\epsilon$ est un réel strictement positif. On demande à cette courbe d'être analytique au sens suivant : l'application $\(t,x\) \in \left]- \epsilon , \epsilon \right[ \times S^1 \mapsto f_t\(x\) \in S^1$ est analytique. On demande de plus aux $f_t$ d'être uniformément dilatantes : il existe $\lambda > 1$ tel que pour tout $t \in \left]- \epsilon, \epsilon \right[$ et tout $x \in S^1$ on ait $f_t'\(x\) \geqslant \lambda$.

On s'intéresse à des propriétés de cette courbe au voisinage de $0$ et on s'autorise donc à réduire $\epsilon$ chaque fois que cela sera nécessaire.

Quitte à réduire $\epsilon$, on étend $\(t,x\) \mapsto f_t\(x\)$ en une application holomorphe sur $\D{0}{\epsilon} \times U$, où $U$ est un voisinage de $S^1$ dans $\C / \Z$ tel que pour tout $\(t,x\) \in \D{0}{\epsilon} \times U$ on ait $f_t'\(x\) \neq 0$.

Pour tout $t \in \D{0}{\epsilon}$, on définit $\tilde{f}_t$ et $F_t$ comme dans \S \ref{11}. Notons que lorsque $t$ n'est pas réel, il n'y a pas de raison pour que $f_t$ préserve le cercle. La prise en compte des valeurs complexes de $t$ sera cependant très utile lorsque nous nous intéresserons à la dépendance en $t$ de l'opérateur de transfert.

Enfin, on se donne une application analytique $g : S^1 \to \R$.

Il nous faut maintenant repasser par quelques résultats techniques afin de justifier que le $R<r_0$ de \S \ref{11}, avec $r_0$ comme dans le lemme \ref{antec}, peut être choisi de manière convenable pour tous les $t$ réels suffisamment petits et que la définition de l'opérateur de transfert pour les valeurs non-réelles de $t$ reste valable.

Soit $r$ un réel strictement positif tel que $\mathcal{C}_r \subseteq U$. On note $M$ la borne supérieure de $ |F_t' \(x\)|$ pour $t \in \D{0}{\epsilon}$ et $x \in \C$ tel que $\Re x \in \left[- \frac{r}{2}, 1 + \frac{r}{2} \right]$ et $\Im x \in \left[ - \frac{r}{2}, \frac{r}{2} \right[$, qui est finie quitte à réduire $\epsilon$.

\begin{lm}
Pour $t$ réel, le degré de $f_t$ (vue comme une application du cercle dans lui-même) est indépendant de $t$, on le note $D$.
\end{lm}

\begin{proof}
Le degré de $f_t$ est donnée par la formule $ \s{S^1}{f_t'\(x\)}{x} $ qui est continue en $t$ et donc constante.
\end{proof}

\begin{lm}\label{antec2}
Soit $0 < r_0 < \min\(\frac{r}{4}, \frac{r^2\(\lambda - 1\)}{8M}\)$. Quitte à réduire $\epsilon$, pour tout $t$, tout élément de $\mathcal{C}_{r_0}$ a exactement $D$ antécédents par $f_t$ dans $\mathcal{C}_{r_0}$.
\end{lm}

\begin{proof}
Soient $\lambda' > 1$ tel que $r_0 < \min\(\frac{r}{4}, \frac{r^2\(\lambda - \lambda'\)}{8M}\)$ et $1 < \lambda'' < \lambda'$. Par le lemme \ref{tech}, on sait que pour tout $z \in \partial \mathcal{C}_{r_0}$, $|\Im f_0 \(z\) | \geqslant \lambda' r_0$. Et donc par compacité de $\partial \mathcal{C}_{r_0}$, quitte à réduire $\epsilon$, on a pour tout $t$ et tout $x \in \partial \mathcal{C}_{r_0}$ , $|\Im f_t \(z\) | \geqslant \lambda'' r_0$.

Si $z \in \mathcal{C}_{r_0}$, le nombre d'antécédents de $z$ par $f_t$ dans $\mathcal{C}_{r_0}$ est :
\[
\frac{1}{2i \pi} \s{\partial \mathbb{A}_{r_0}}{\frac{\tilde{f_t}'\(z\)}{\tilde{f_t}\(z\) - \psi \(z\) }}{z}
\]
qui est continue en $t$ car l'intégrande est bornée par $ \frac{\sup_{t} |\tilde{f_t}'\(z\)|}{ 2 \(\sinh\(\lambda'' r_0\) - \sinh\(r_0\)\)}$ qui est finie quitte à réduire $\epsilon$. Ce nombre d'antécédents est donc indépendant de $t$ et vaut $D$ pour $t = 0$ par le lemme \ref{antec}.
\end{proof}

\begin{lm}\label{rev2}
Avec les notations du lemme précédent, pour tout $t \in \D{0}{\epsilon}$, l'application $f_t$ induit un revêtement holomorphe à $D$ feuillets de $\mathcal{C}_{r_0} \cap f_t^{-1} \mathcal{C}_{r_0}$ sur $\mathcal{C}_{r_0}$.
\end{lm}

\begin{proof}
La preuve est le même que celle du lemme \ref{rev}.
\end{proof}

On se donne $r_0$ comme précédemment. On étend $g$ en une application holomorphe sur un voisinage de $S^1$ à valeurs dans $\C$. On se donne dans la suite $R < r_0$ tel que $g$ est définie sur un voisinage de $\bar{\mathcal{C}_R}$.

\begin{df}
Si $t \in \D{0}{\epsilon}$, $u \in \C$,  $\phi \in \A{\mathcal{C}_R}$ et $z \in \bar{\mathcal{C}_R}$ on pose 
\[
\L_{u,t} \phi \(z\) = \sum_{x \in \mathcal{C}_R : f_t\(x\) = z} \frac{e^{- u g\(x\)}}{f_t'\(x\)} \phi\(x\)
.\]
\end{df}

\begin{prop}
Si $t \in \D{0}{\epsilon}$ et $u \in \R$, alors $\L_{u,t}$ définit un opérateur borné de $\A{\mathcal{C}_R}$ dans lui-même.
\end{prop}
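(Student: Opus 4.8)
Le plan est de reconnaître que $\L_{u,t}$ n'est autre que l'opérateur de transfert $\L$ de \S\ref{11} associé à la dynamique $f_t$ et au poids $G_{u,t} = \frac{e^{-ug}}{f_t'}$, de sorte que la conclusion découle essentiellement du lemme \ref{rev2} exactement comme la proposition \ref{defin} découlait du lemme \ref{rev}. Je vérifierais d'abord que le poids $G_{u,t}$ est holomorphe et borné sur un voisinage de $\bar{\mathcal{C}_R}$ : l'application $f_t'$ ne s'annule pas sur $U \supseteq \mathcal{C}_{r_0}$ par construction, et $g$ s'étend holomorphiquement à un voisinage de $\bar{\mathcal{C}_R}$, donc $x \mapsto \frac{e^{-ug\(x\)}}{f_t'\(x\)}$ est holomorphe là où elle est définie ; comme $\bar{\mathcal{C}_R}$ est compacte, ce poids y est borné (pour tout $u \in \C$ d'ailleurs, le caractère réel de $u$ n'intervenant pas dans cette étape de bornitude).

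Je décrirais ensuite précisément la somme définissant $\L_{u,t}\phi\(z\)$. Par le lemme \ref{rev2}, $f_t$ induit un revêtement holomorphe à $D$ feuillets de $\mathcal{C}_{r_0} \cap f_t^{-1}\mathcal{C}_{r_0}$ sur $\mathcal{C}_{r_0}$ ; comme $R < r_0$, tout $z \in \bar{\mathcal{C}_R}$ admet un voisinage au-dessus duquel ce revêtement se trivialise, ce qui fournit localement des branches inverses holomorphes $h_1, \dots, h_D$. De plus, la propriété de contraction du lemme \ref{tech} (valable uniformément en $t$ quitte à réduire $\epsilon$, comme dans la preuve du lemme \ref{antec2}) assure que les antécédents d'un point de $\mathcal{C}_R$ qui appartiennent à $\mathcal{C}_R$ tombent en fait dans $\mathcal{C}_{\frac{R}{\lambda'}}$ pour un certain $\lambda' > 1$, donc à l'intérieur strict de $\mathcal{C}_R$ : il n'y a ainsi aucune difficulté de bord du côté des antécédents, et la somme porte exactement sur les $h_j\(z\)$ prenant leurs valeurs dans $\mathcal{C}_R$.

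On en déduit que, localement, $\L_{u,t}\phi\(z\) = \sum_{j} G_{u,t}\(h_j\(z\)\)\phi\(h_j\(z\)\)$ est une somme finie de composées de fonctions holomorphes, donc holomorphe sur $\mathcal{C}_R$ et continue sur $\bar{\mathcal{C}_R}$ ; autrement dit $\L_{u,t}\phi \in \A{\mathcal{C}_R}$. La majoration de norme est alors immédiate : pour $z \in \bar{\mathcal{C}_R}$ la somme comporte au plus $D$ termes, chacun majoré par $\sup_{\bar{\mathcal{C}_R}}|G_{u,t}| \cdot \|\phi\|$, d'où $\|\L_{u,t}\phi\| \leqslant D \, \sup_{\bar{\mathcal{C}_R}}|G_{u,t}| \, \|\phi\|$, ce qui donne la bornitude de $\L_{u,t}$.

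La seule subtilité, et le point réellement à justifier, est que le nombre d'antécédents pertinents est localement constant et que les branches inverses sont holomorphes jusqu'au bord $\partial\mathcal{C}_R$ : c'est précisément ce que garantit la structure de revêtement du lemme \ref{rev2}, combinée au confinement des antécédents dans $\mathcal{C}_{\frac{R}{\lambda'}}$, qui interdit tout phénomène de coalescence de branches au-dessus de $\bar{\mathcal{C}_R}$. Une fois ce point acquis, tout le reste relève de vérifications de routine.
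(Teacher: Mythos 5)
Your proof is correct and takes essentially the same route as the paper: the paper's proof of this proposition simply says it is the same as that of Proposition \ref{defin}, which in turn is an immediate consequence of the covering lemma (here Lemme \ref{rev2}), and your argument is exactly that reduction with the routine details (holomorphy and boundedness of the weight $\frac{e^{-ug}}{f_t'}$, local inverse branches, strict confinement of the preimages) written out. The one point the paper only settles in the lemma that follows — confinement of the $D$ preimages of points of $\bar{\mathcal{C}_R}$ in some $\mathcal{C}_{R'}$ with $R' < R$, which for non-real $t$ requires the compactness-and-continuity-in-$t$ argument in the style of Lemme \ref{antec2} rather than Lemme \ref{tech} itself — is precisely the one you identify and invoke, so nothing is missing.
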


\begin{proof}
La preuve est la même que celle du lemme \ref{defin}.
\end{proof}

\begin{lm}
Il existe $R' < R$ tel que pour tout $t$ suffisamment petit et $x \in \bar{\mathcal{C}_R}$ les $d$ antécédents de $x$ par $f_t$ dans $\mathcal{C}_{r_0}$ sont en fait dans $\mathcal{C}_{R'}$.
\end{lm}

\begin{proof}
On définit $\lambda'$ comme dans la preuve du lemme \ref{antec2} et on choisit $R'$ tel que $\frac{R}{\lambda'} < R'< R$. On sait par le lemme \ref{tech} que ce choix de $R'$ convient dans le cas réel. De plus pour tout $t$ réel, si $z \in \partial \mathcal{C}_R'$ alors $f_t\(z\) \notin \bar{\mathcal{C}_R}$. Par compacité de $\partial \mathcal{C}_R'$, c'est encore vrai pour $t$ suffisamment petit et donc si $x \in x \in \bar{\mathcal{C}_R}$ alors le nombre d'antécédents de $x$ par $f_t$ dans $\mathcal{C}_{R'}$ est donné par la formule :
\[
\frac{1}{2i \pi} \s{\partial \mathbb{A}_{R'}}{\frac{\tilde{f}_t'\(z\)}{\tilde{f}_t\(z\)-\psi\(w\)}}{z}
\]
qui est continue en $t$ donc vaut $D$ (car c'est le cas lorsque $t$ est réel). D'où le résultat annoncé.
\end{proof}

Quitte à réduire $\epsilon$, le résultat du lemme précédent a lieu pour tout $t \in \D{0}{\epsilon}$.

On choisit $R' < R'' < R$ et on note $i$ l'application de restriction de $\A{\mathcal{C}_R}$ dans $\A{\mathcal{C}_{R''}}$ et pour $u \in \C$ et $t \in \D{0}{\epsilon}$, $\widehat{\L}_{u,t}$ l'application de $\A{\mathcal{C}_{R''}}$ dans $\A{\mathcal{C}_R}$ définie par la même formule que $\L_{u,t}$, ce qui est légitime par le lemme précédent. On a alors $ \L_{u,t} =\widehat{\L}_{u,t} \circ i$ et $i$ est nucléaire (et même d'ordre $0$), ce sera une conséquence immédiate de la proposition \ref{injnuc} (on peut aussi remarquer que $i$ se factorise par un espace nucléaire).

\begin{lm}\label{Cont}
L'application $\(u,t\) \mapsto \widehat{\L}_{u,t}$ est continue sur $\C \times \D{0}{\epsilon} $.
\end{lm}

\begin{proof}
On commence par remarquer que, quitte à réduire $\epsilon$, il existe une constante positive $C$ tels que pour tous $t,t' \in \D{0}{\epsilon}$ et tout $z \in \bar{\mathcal{C}_R}$ on peut mettre les antécédents dans $\mathcal{C}_R$ de $z$ par $f_t$ en bijection avec ceux par $f_{t'}$ de telle manière que deux points ainsi associés sont à distance plus petite que $C|t-t'|$ (c'est une conséquence du théorème des fonctions implicites et d'un argument de compacité).

Par ailleurs, il est classique de déduire des inégalités de Cauchy que l'application qui à un élément de $\mathcal{C}_{R''}$ associe la restriction de sa dérivée à $\bar{\mathcal{C}_{R'}}$ définit un opérateur borné de $\A{\mathcal{C}_{R''}}$ dans $\A{\mathcal{C}_{R'}}$.

Le résultat se déduit de ces deux faits par un calcul lourd mais sans malice.
\end{proof}

La ruse suivante pour récupérer la dépendance holomorphe de l'opérateur de transfert en $u$ et $t$ est classique.

\begin{lm}\label{cont2}
Soit $z \mapsto \L_z$ une application continue d'un ouvert de $\C^N$ dans l'espace des opérateurs bornés de $\A{\mathcal{C}_{R''}}$ dans $\A{\mathcal{C}_R}$ telle que pour tout $\phi \in \A{\mathcal{C}_{R''}}$ et tout $x \in \mathcal{C}_R$ l'application $z \mapsto \L_z \phi \(x\)$ est holomorphe. Alors $z \mapsto \L_z$ est holomorphe.
\end{lm}

\begin{proof}
L'hypothèse de continuité permet de donner un sens à la formule de Cauchy. L'autre hypothèse assure qu'elle est vérifiée.
\end{proof}

On en déduit alors immédiatement avec le lemme \ref{Cont}:

\begin{lm}
L'application $\(u,t\) \mapsto \widehat{\L}_{u,t}$ est holomorphe sur $\C \times \D{0}{\epsilon}$.
\end{lm}

\begin{cor}\label{hol}
Il existe une application holomorphe $\(u,t\)  \mapsto L_{u,t} \in \A{\mathcal{C}_R}' \oh \A{\mathcal{C}_R} $ sur $\C \times \D{0}{\epsilon}$ telle que $L_{u,t}$ est un noyau $\frac{2}{3}$-sommable de $\L_{u,t}$ pour tout $\(u,t\) \in \C \times \D{0}{\epsilon}$.
\end{cor}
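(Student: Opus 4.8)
Le plan est d'exploiter la factorisation $\L_{u,t} = \widehat{\L}_{u,t} \circ i$ déjà obtenue, en transportant une décomposition nucléaire de l'inclusion $i : \A{\mathcal{C}_R} \to \A{\mathcal{C}_{R''}}$ à travers l'opérateur holomorphe $\widehat{\L}_{u,t}$. Puisque $i$ est nucléaire d'ordre $0$ (conséquence de la proposition \ref{injnuc}), on dispose d'une décomposition $i = \sum_{n} \lambda_n \, e_n' \otimes f_n$, où $e_n' \in \A{\mathcal{C}_R}'$ et $f_n \in \A{\mathcal{C}_{R''}}$ sont de norme $1$ et où $\sum_{n} |\lambda_n|^{\frac{2}{3}} < + \infty$ (donc a fortiori $\sum_{n} |\lambda_n| < + \infty$). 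On pose alors
\[
L_{u,t} = \sum_{n} \lambda_n \, e_n' \otimes \widehat{\L}_{u,t} f_n
,\]
qui est le candidat naturel pour le noyau cherché.

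Il faut d'abord voir que cette série converge dans $\A{\mathcal{C}_R}' \oh \A{\mathcal{C}_R}$ et y représente bien $\L_{u,t}$. La convergence dans la norme projective résulte de la majoration $\| \lambda_n \, e_n' \otimes \widehat{\L}_{u,t} f_n \| \leqslant |\lambda_n| \, \|\widehat{\L}_{u,t}\|$ et de la sommabilité $\sum_{n} |\lambda_n| < + \infty$. Comme l'application qui à un noyau associe l'opérateur qu'il représente est linéaire continue, $L_{u,t}$ représente l'opérateur $\phi \mapsto \sum_{n} \lambda_n e_n'\(\phi\) \widehat{\L}_{u,t} f_n = \widehat{\L}_{u,t}\( \sum_{n} \lambda_n e_n'\(\phi\) f_n \) = \widehat{\L}_{u,t} \( i \(\phi\) \) = \L_{u,t} \phi$, la continuité de $\widehat{\L}_{u,t}$ autorisant le passage de la somme sous l'opérateur. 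Enfin ce noyau est $\frac{2}{3}$-sommable, car $\sum_{n} \( |\lambda_n| \, \|\widehat{\L}_{u,t} f_n\| \)^{\frac{2}{3}} \leqslant \|\widehat{\L}_{u,t}\|^{\frac{2}{3}} \sum_{n} |\lambda_n|^{\frac{2}{3}} < + \infty$.

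Reste la dépendance holomorphe. Pour chaque $n$ fixé, l'application $\(u,t\) \mapsto \widehat{\L}_{u,t} f_n$ est holomorphe de $\C \times \D{0}{\epsilon}$ dans $\A{\mathcal{C}_R}$, comme composée de l'application holomorphe à valeurs opérateurs $\(u,t\) \mapsto \widehat{\L}_{u,t}$ et de l'évaluation continue en $f_n$ ; chaque terme $\(u,t\) \mapsto \lambda_n \, e_n' \otimes \widehat{\L}_{u,t} f_n$ est donc holomorphe à valeurs dans $\A{\mathcal{C}_R}' \oh \A{\mathcal{C}_R}$, et il en va de même des sommes partielles. Comme $\(u,t\) \mapsto \|\widehat{\L}_{u,t}\|$ est bornée sur tout compact par continuité, la majoration ci-dessus montre que la série converge uniformément sur tout compact de $\C \times \D{0}{\epsilon}$ pour la norme projective. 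Une limite uniforme sur les compacts de fonctions holomorphes à valeurs dans un Banach étant holomorphe (estimations de Cauchy), on conclut que $\(u,t\) \mapsto L_{u,t}$ est holomorphe. Le point le plus délicat est précisément cette convergence uniforme : c'est là que la seule continuité de $\widehat{\L}_{u,t}$ suffit, parce que la nucléarité d'ordre $0$ de $i$ fournit une sommabilité des $\(\lambda_n\)$ bien plus forte que nécessaire, absorbant la borne locale de $\|\widehat{\L}_{u,t}\|$.
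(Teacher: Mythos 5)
Your proposal is correct and takes essentially the same approach as the paper: the paper's proof consists precisely in choosing a $\frac{2}{3}$-summable kernel $v$ of $i$ and setting $L_{u,t} = \widehat{\L}_{u,t} \circ v$, which is exactly your series $\sum_{n} \lambda_n \, e_n' \otimes \widehat{\L}_{u,t} f_n$ written coordinate-free. You have simply made explicit the verifications (representation of $\L_{u,t}$, $\frac{2}{3}$-summability, holomorphy via locally uniform convergence) that the paper leaves to the reader.
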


\begin{proof}
On choisit un noyau $\frac{2}{3}$-sommable $v$ de $i$ et on pose $L_{u,t} = \widehat{\L}_{u,t} \circ v $.
\end{proof}

\begin{prop}\label{ppal}
Si $z \in \C$, $u \in \R$ et $t \in \left]- \epsilon, \epsilon \right[$, on note :
\[
d\(z,u,t\) = \det\(I - z \L_{u,t}\)
.\]
L'application $d$ ainsi définie est analytique : c'est la restriction d'une application holomorphe par le corollaire \ref{hol}.
\end{prop}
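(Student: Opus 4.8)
Le plan est d'exhiber un prolongement holomorphe de $d$ aux trois variables complexes $\(z,u,t\) \in \C \times \C \times \D{0}{\epsilon}$; l'analyticité de $d$ en résultera immédiatement par restriction de $u$ et $t$ aux valeurs réelles. En effet, pour $u$ et $t$ réels (et $z \in \C$ quelconque) $L_{u,t}$ est un noyau de l'opérateur borné $\L_{u,t}$ par le corollaire \ref{hol}, et comme le déterminant de Fredholm d'un noyau $\frac{2}{3}$-sommable ne dépend que de l'opérateur associé, le prolongement construit coïncidera bien avec $d$ sur cet ensemble réel.

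L'outil central sera le développement du déterminant de Fredholm fourni par la théorie de Grothendieck. Je l'écrirais sous la forme $\det\(I - K\) = \sum_{n \geqslant 0} \(-1\)^n \delta_n\(K\)$, où $\delta_n$ est un polynôme $n$-homogène continu sur l'espace $\B$ des noyaux $\frac{2}{3}$-sommables (la trace de la puissance extérieure $n$-ième). Le point délicat, qui est tout le contenu de l'argument, est d'obtenir une majoration de la forme $|\delta_n\(K\)| \leqslant c_n \|K\|^n$, où $\|\cdot\|$ désigne la quasi-norme des noyaux $\frac{2}{3}$-sommables et où les constantes universelles $c_n$ décroissent assez vite pour que $\sum_n c_n s^n < +\infty$ quel que soit $s \geqslant 0$ (le déterminant étant une fonction entière de $z$, d'ordre au plus $\frac{2}{3}$). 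C'est précisément ici qu'intervient de façon quantitative la $\frac{2}{3}$-sommabilité — qui assurait déjà l'existence du déterminant via le corollaire 4 de \cite{Groth} — au travers du contrôle des valeurs propres. Ces estimations garantiront la convergence uniforme de la série sur tout borné de $\B$.

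Il restera alors à assembler les pièces. Par le corollaire \ref{hol} et sa construction, qui fait composer l'application holomorphe $\(u,t\) \mapsto \widehat{\L}_{u,t}$ à valeurs opérateurs bornés avec un noyau $\frac{2}{3}$-sommable fixe, l'application $\(u,t\) \mapsto L_{u,t}$ est holomorphe, donc en particulier localement bornée, pour la quasi-norme des noyaux $\frac{2}{3}$-sommables. Chaque $\delta_n$ étant un polynôme continu, l'application $\(u,t\) \mapsto \delta_n\(L_{u,t}\)$ est holomorphe, et donc $\(z,u,t\) \mapsto \(-z\)^n \delta_n\(L_{u,t}\)$ l'est aussi. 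Sur un compact quelconque de $\C \times \C \times \D{0}{\epsilon}$, les bornes $|z| \leqslant r$ et $\|L_{u,t}\| \leqslant \rho$ jointes aux majorations précédentes donnent $\sum_n c_n \(r\rho\)^n < +\infty$, d'où la convergence uniforme de la série $\sum_n \(-z\)^n \delta_n\(L_{u,t}\)$ sur ce compact. Le théorème de Weierstrass (une limite uniforme sur les compacts de fonctions holomorphes de plusieurs variables est holomorphe) permet alors de conclure que $\(z,u,t\) \mapsto \det\(I - z L_{u,t}\)$ est holomorphe sur $\C \times \C \times \D{0}{\epsilon}$, ce qui achève la démonstration.
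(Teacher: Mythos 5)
Your proof is correct and follows essentially the same route as the paper: you compose the holomorphic kernel-valued map $(u,t) \mapsto L_{u,t}$ of Corollary \ref{hol} with the Grothendieck--Fredholm determinant, viewed as an entire function on the space of kernels, and then restrict to real $u$ and $t$, which is exactly what the paper does in one line. The only difference is that you unpack what the paper imports wholesale from \cite{Groth} (the expansion in homogeneous polynomials, the Hadamard-type bounds, the Weierstrass argument); note merely that these bounds already hold for the projective norm of nuclear kernels, so the $\frac{2}{3}$-summability is not what makes this series converge --- it is what guarantees that the determinant depends only on the operator $\L_{u,t}$ and not on the chosen kernel, a point you correctly invoke when identifying the extension with $d$.
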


\begin{rmq}
On ne considérera dans la prochaine partie que les valeurs réelles de $u$ et $t$. Le passage par des valeurs complexes n'était qu'un artifice pour récupérer l'analyticité de $d$ en utilisant le lemme \ref{cont2}, qui nous servira également à obtenir une estimation de la vitesse de convergence des séries apparaissant dans la formule que nous donnerons pour la réponse linéaire.
\end{rmq}

Nous aurons besoin d'un dernier résultat sur le déterminant dynamique avant de pouvoir donner la formule promise pour la réponse linéaire.

Si $t \in \left]- \epsilon, \epsilon \right[$ et $u \in \R$ et $\phi$ est une application höldérienne de $S^1$ dans $\R$, on note $P_t\(\phi\)$ la pression topologique de $\phi$ pour la dynamique de $f_t$ (voir \S 3.4 de \cite{Ruelle2} pour une définition).

Si $t \in \left]- \epsilon, \epsilon \right[$ et $u \in \R$, on note $\check{\L}_{u,t}$ l'opérateur de transfert associé à la dynamique $f_t$ et au poids $\frac{e^{ug}}{f_t'}$ agissant sur l'espace des fonctions $\mathcal{C}^1$ de $S^1$ dans $\C$. On note $j$ l'opérateur de restriction de $\A{\mathcal{C}_R}$ dans cet espace. On a alors $j \circ \L_{u,t} = \check{\L}_{u,t} \circ j$ et par conséquent le spectre de $\L_{u,t}$ est inclus dans celui de $\check{\L}_{u,t}$ (multiplicités prises en compte), puisque $j$ est injective.

\begin{prop}\label{trouspec}
Si $u \in \R$ et $t \in \left]- \epsilon, \epsilon \right[$ alors $\exp\(P_t\(-\log f_t ' -ug\)\)$ est valeur propre de $\L_{u,t}$. De plus, elle est simple et $\L_{u,t}$ n'a pas d'autres valeurs propres hors d'un disque de centre $0$ et de rayon strictement plus petit que $\exp\(P_t\(-\log f_t ' -ug\)\)$ (on dit que $\L_{u,t}$ a un trou spectral associé à la valeur propre $\exp\(P_t\(-\log f_t ' -ug\)\)$).
\end{prop}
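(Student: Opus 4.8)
Je proposerais de déduire cet énoncé du théorème de Ruelle--Perron--Frobenius classique pour l'opérateur $\check{\L}_{u,t}$, en transférant l'information spectrale de l'espace des fonctions $\mathcal{C}^1$ vers l'espace analytique $\A{\mathcal{C}_R}$ grâce à l'inclusion $\mathrm{Spec}(\L_{u,t}) \subseteq \mathrm{Spec}(\check{\L}_{u,t})$ (multiplicités comprises) déjà établie avant l'énoncé. Posons $\lambda_0 = \exp(P_t(-\log f_t' - ug))$. Comme $u$ et $t$ sont réels et $g$ à valeurs réelles, le poids $\frac{e^{-ug}}{f_t'}$ est strictement positif sur $S^1$, de sorte que $\check{\L}_{u,t}$ est un opérateur de transfert positif associé au potentiel höldérien $-\log f_t' - ug$; le théorème de Ruelle--Perron--Frobenius (voir \S 3.4 de \cite{Ruelle2}) affirme alors que $\lambda_0$ en est une valeur propre simple et qu'il existe $\lambda_1 < \lambda_0$ tel que le reste du spectre de $\check{\L}_{u,t}$ soit contenu dans $\{|z| \leqslant \lambda_1\}$. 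Par l'inclusion des spectres, il ne resterait donc qu'à montrer que $\lambda_0$ est effectivement valeur propre de $\L_{u,t}$: les autres propriétés en découleraient immédiatement, puisque toute valeur propre de $\L_{u,t}$ distincte de $\lambda_0$ appartiendrait à $\mathrm{Spec}(\check{\L}_{u,t}) \setminus \{\lambda_0\}$ et que la multiplicité pour $\L_{u,t}$ est majorée par celle pour $\check{\L}_{u,t}$.

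Le point central est donc d'établir que le rayon spectral $\rho$ de $\L_{u,t}$ vaut exactement $\lambda_0$. L'inclusion des spectres donne déjà $\rho \leqslant \lambda_0$. Pour l'inégalité inverse, j'exploiterais la formule pour les traces: avec le poids $G = \frac{e^{-ug}}{f_t'}$ on a $G_n(x) = \frac{e^{-u S_n g(x)}}{(f_t^n)'(x)}$, où $S_n g = \sum_{k=0}^{n-1} g \circ f_t^k$, de sorte que le corollaire suivant le lemme \ref{calc} donne
\[
\tr(\L_{u,t}^n) = \sum_{x \in \fix{f_t^n}} \frac{e^{-u S_n g(x)}}{(f_t^n)'(x) - 1}.
\]
Tous les termes étant strictement positifs et vérifiant $1 \leqslant \frac{(f_t^n)'(x)}{(f_t^n)'(x) - 1} \leqslant \frac{\lambda^n}{\lambda^n - 1}$, cette trace est encadrée, à un facteur tendant vers $1$ près, par $\sum_{x \in \fix{f_t^n}} \frac{e^{-u S_n g(x)}}{(f_t^n)'(x)}$. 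Or, comme $-\log (f_t^n)'(x) - u S_n g(x) = S_n(-\log f_t' - ug)(x)$, cette dernière somme est précisément celle dont la croissance exponentielle définit la pression $P_t(-\log f_t' - ug)$ (formule de Bowen pour les applications dilatantes, voir \S 3.4 de \cite{Ruelle2}), d'où $\frac{1}{n} \log \tr(\L_{u,t}^n) \to \log \lambda_0$.

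Il resterait alors à convertir cette croissance des traces en minoration du rayon spectral. Comme $\L_{u,t}$ est $\frac{2}{3}$-sommable, ses valeurs propres $(\mu_j)_j$ (comptées avec multiplicité) sont absolument sommables et $\tr(\L_{u,t}^n) = \sum_j \mu_j^n$ (théorie de Grothendieck, cf. \cite{Groth}); on en tire $\tr(\L_{u,t}^n) \leqslant \rho^{n-1} \sum_j |\mu_j|$, puis $\log \lambda_0 = \lim_n \frac{1}{n} \log \tr(\L_{u,t}^n) \leqslant \log \rho$, soit $\rho \geqslant \lambda_0$, et finalement $\rho = \lambda_0$. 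L'opérateur $\L_{u,t}$ étant compact de rayon spectral $\lambda_0 > 0$, il possède une valeur propre $\mu_0$ de module $\lambda_0$; mais $\mu_0 \in \mathrm{Spec}(\check{\L}_{u,t})$ et le seul point de ce spectre de module $\lambda_0$ est $\lambda_0$ lui-même, donc $\mu_0 = \lambda_0$. La simplicité et le trou spectral résulteraient alors de l'inclusion des spectres comme expliqué ci-dessus. Le principal obstacle est cette minoration du rayon spectral: l'inclusion des spectres ne fournit qu'une majoration, et c'est le calcul de trace reliant la croissance de $\tr(\L_{u,t}^n)$ à la pression qui garantit que la valeur propre dominante $\lambda_0$ survit sur l'espace analytique.
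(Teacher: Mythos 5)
Votre preuve est correcte, mais elle diverge de celle du texte au point crucial. La réduction initiale est identique : inclusion spectrale $\mathrm{Spec}(\L_{u,t}) \subseteq \mathrm{Spec}(\check{\L}_{u,t})$ (multiplicités comprises) et théorème de Ruelle--Perron--Frobenius pour $\check{\L}_{u,t}$, de sorte que tout revient à montrer que $\lambda_0 = \exp\(P_t\(-\log f_t' - ug\)\)$ est bien valeur propre de $\L_{u,t}$. Pour ce point, le texte procède par un argument « mou » et par l'absurde : si $\lambda_0$ n'était pas valeur propre, le rayon spectral de $\L_{u,t}$ serait strictement inférieur à $\lambda_0$, donc $e^{-nP_t}\L_{u,t}^n\(\mathbbm{1}\)$ tendrait vers $0$ dans $\A{\mathcal{C}_R}$ ; or l'entrelacement $j \circ \L_{u,t} = \check{\L}_{u,t} \circ j$ et la convergence RPF de $e^{-nP_t}\check{\L}_{u,t}^n\(j\(\mathbbm{1}\)\)$ vers une fonction propre $h \neq 0$ donnent une convergence simple vers $h$ sur le cercle, d'où contradiction. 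Vous remplacez cet argument par une identification quantitative du rayon spectral : formule des traces (corollaire du lemme \ref{calc}), positivité des termes, caractérisation de la pression par les sommes sur les points périodiques, puis égalité de Grothendieck--Lidskii $\tr\(\L_{u,t}^n\) = \sum_j \mu_j^n$ pour les opérateurs $\frac{2}{3}$-sommables, ce qui force $\rho = \lambda_0$ et, par compacité, l'existence d'une valeur propre de module $\lambda_0$, nécessairement égale à $\lambda_0$ par l'inclusion spectrale. Les deux approches sont légitimes : celle du texte n'utilise rien de plus que la convergence RPF et le fait que $\mathbbm{1}$ est analytique, tandis que la vôtre requiert deux ingrédients supplémentaires — le théorème de type Lidskii pour les noyaux $\frac{2}{3}$-sommables (fait que le texte exploite de toute façon implicitement lorsqu'il identifie les zéros du déterminant aux inverses des valeurs propres dans la proposition \ref{final}) et la formule de Bowen--Ruelle pour la pression via les points périodiques (analogue de ce que le texte cite de \cite{Ruelle2}, \S 7.23--7.24, dans le cas Anosov). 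En contrepartie, votre argument est plus robuste et plus informatif : il identifie directement $\rho\(\L_{u,t}\) = \lambda_0$ par les traces, sans recourir à la positivité de la fonction propre ni à la convergence des itérés, et c'est d'ailleurs la technique standard dans cette littérature pour montrer qu'aucune annulation ne fait « disparaître » la valeur propre dominante sur l'espace analytique.
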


\begin{proof}
Le résultat annoncé est vérifié par $\check{\L}_{u,t}$ (voir par exemple le théorème 2.6 page 93 de \cite{Bal1}) et donc, par le point ci-dessus, il suffit de montrer que $\exp\(P_t\(-\log f_t ' -ug\)\)$ est effectivement valeur propre de $\L_{u,t}$. 

Pour la même raison, la suite $\(e^{ -n P_t\(-\log f_t ' - ug\)} \check{\L}_{u,t}^n \( j \(\mathbbm{1}\)\)\)_{n \geqslant1}$ converge dans l'espace des fonctions $\mathcal{C}^1$ (en particulier simplement) vers un vecteur propre $h$ de $\check{\L}_{u,t}$ associé à cette valeur propre. 

Notons alors $\rho$ le rayon spectral de $\L_{u,t}$ et supposons que $\exp\(P_t\(-\log f_t ' -ug\)\)$ n'est pas valeur propre de $\L_{u,t}$ . Le trou spectral de $\check{\L}_{u,t}$ implique alors que $\rho < \exp\(P_t\(-\log f_t ' -ug\)\)$ et donc la suite $\(e^{ -n P_t\(-\log f_t ' -ug\)} \L_{u,t}^n \( \mathbbm{1}\)\)_{n \geqslant1}$ converge vers $0$ dans $\A{\mathcal{C}_R}$, ce qui est absurde puisqu'elle converge simplement vers $h \neq 0$ sur le cercle.
\end{proof}

Si $n$ est un entier naturel, $t \in \left]- \epsilon, \epsilon \right[$ et $x \in S^1$, on pose 
\[
g_{t,n}\(x\) = \sum_{k=0}^{n-1} g\(f_t^k\(x\)\)
.\]

\begin{prop}\label{final}
Si $u \in \R$ et $t \in \left]- \epsilon, \epsilon \right[$ alors $\exp\(-P_t\(-\log f_t ' -ug\)\)$ est le zéro de plus petit module de l'application holomorphe $ z \mapsto d\(z,u,t\)$. De plus, il est simple et si $|z| < \exp\(-P_t\(-\log f_t ' -ug\)\)$ alors la série 
\[
\sum_{n \geqslant 1 } \frac{z^n}{n} \sum_{x \in \fix{f_t^n}} \frac{e^{-u g_{t,n}\(x\)}}{\(f_t^n\)'\(x\) - 1}
\]
converge et on a
\[
d\(z,u,t\) = \exp \( - \sum_{n = 1}^{+ \infty} \frac{z^n}{n} \sum_{x \in \fix{f_t^n}} \frac{e^{-u g_{t,n}\(x\)}}{\(f_t^n\)'\(x\) - 1} \)
.\]
\end{prop}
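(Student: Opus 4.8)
Le plan est d'appliquer le corollaire \ref{formule} à la dynamique $f_t$ (pour $t$ réel) munie du poids $G = \frac{e^{-ug}}{f_t'}$, qui est bien un poids holomorphe puisque $f_t'$ ne s'annule pas, puis d'identifier le rayon spectral grâce à la proposition \ref{trouspec} et enfin d'exploiter la correspondance entre zéros du déterminant de Fredholm et valeurs propres de l'opérateur. Pour ce choix de poids, la formule de dérivation composée $\(f_t^n\)'\(x\) = \prod_{k=0}^{n-1} f_t'\(f_t^k\(x\)\)$ donne
\[
G_n\(x\) = \prod_{k=0}^{n-1} \frac{e^{-ug\(f_t^k\(x\)\)}}{f_t'\(f_t^k\(x\)\)} = \frac{e^{-u g_{t,n}\(x\)}}{\(f_t^n\)'\(x\)},
\]
de sorte que la formule de trace du corollaire se simplifie en
\[
\tr\(\L_{u,t}^n\) = \sum_{x \in \fix{f_t^n}} \frac{\(f_t^n\)'\(x\) G_n\(x\)}{\(f_t^n\)'\(x\) - 1} = \sum_{x \in \fix{f_t^n}} \frac{e^{-u g_{t,n}\(x\)}}{\(f_t^n\)'\(x\) - 1}.
\]
En notant $\rho$ le rayon spectral de $\L_{u,t}$, le corollaire \ref{formule} assure alors que la série en argument de l'exponentielle converge pour $|z| < \frac{1}{\rho}$ et que l'égalité annoncée vaut sur ce disque.

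Il reste à identifier $\frac{1}{\rho}$ et à établir les propriétés du zéro de plus petit module. La proposition \ref{trouspec} fournit précisément le rayon spectral : la valeur propre dominante de $\L_{u,t}$ est $\exp\(P_t\(-\log f_t' - ug\)\)$, elle est simple, et toutes les autres valeurs propres sont de module strictement inférieur. On a donc $\rho = \exp\(P_t\(-\log f_t' - ug\)\)$, ce qui donne la convergence de la série et la formule sur le disque $|z| < \exp\(-P_t\(-\log f_t' - ug\)\)$.

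Le point délicat est la correspondance entre les zéros de $z \mapsto d\(z,u,t\)$ et les valeurs propres de $\L_{u,t}$. Pour un opérateur $\frac{2}{3}$-sommable, la théorie de Grothendieck (\cite{Groth}) garantit que $d\(z,u,t\) = \det\(I - z\L_{u,t}\)$ s'annule exactement aux points $\frac{1}{\mu}$, où $\mu$ parcourt les valeurs propres non nulles de $\L_{u,t}$, l'ordre du zéro en $\frac{1}{\mu}$ étant égal à la multiplicité algébrique de $\mu$. Le zéro de plus petit module correspond donc à la valeur propre de plus grand module : le trou spectral de la proposition \ref{trouspec} montre que c'est $\frac{1}{\rho} = \exp\(-P_t\(-\log f_t' - ug\)\)$, que ce zéro est strictement séparé des autres (de module strictement plus grand), et qu'il est simple puisque $\rho$ est une valeur propre algébriquement simple. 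La principale difficulté est de citer proprement la version de ce résultat spectral valable pour les opérateurs nucléaires $\frac{2}{3}$-sommables, le reste n'étant qu'un assemblage des résultats déjà établis.
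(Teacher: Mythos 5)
Votre démonstration est correcte et suit essentiellement la même voie que l'article : la proposition \ref{final} y est justifiée comme conséquence immédiate de la proposition \ref{trouspec} et du corollaire \ref{formule}, et vous ne faites qu'expliciter ce que cette immédiateté recouvre, à savoir le calcul de $G_n$ pour le poids $e^{-ug}/f_t'$ via la règle de la chaîne, l'identification du rayon spectral par le trou spectral, et la correspondance de Grothendieck (valable pour les opérateurs $\frac{2}{3}$-sommables, corollaire 4, p. 18 de \cite{Groth}) entre zéros du déterminant de Fredholm et inverses des valeurs propres, multiplicités comprises. Rien à redire.
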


\begin{proof}
C'est une conséquence immédiate de la proposition \ref{trouspec} et du corollaire \ref{formule}.
\end{proof}

\subsection{Formule pour la réponse linéaire}\label{13}

Pour tout $t \in \left] - \epsilon, \epsilon \right[$, on note $\mu_t$ l'unique mesure invariante par $f_t$ absolument continue devant Lebesgue. On note également $\mu = \mu_0$. Nous commençons par rappeler un résultat classique (voir par exemple \S 7.28 et \S 7.12 dans \cite{Ruelle2}).

\begin{prop}\label{class}
Pour tout $t \in \left] - \epsilon, \epsilon \right[$ on a :
\[
\left. \frac{\partial}{\partial u} \( P_t\(- \log f_t + u g\) \) \right|_{u=0} = \s{S^1}{g}{\mu_t}
.\]
\end{prop}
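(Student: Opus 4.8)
Le plan est de combiner deux faits classiques de la théorie des états d'équilibre des applications dilatantes, précisément ceux rappelés dans \S 7.12 et \S 7.28 de \cite{Ruelle2}.

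Je commencerais par rappeler que, pour l'application dilatante $f_t$, la mesure $\mu_t$ (unique mesure invariante absolument continue devant Lebesgue) n'est autre que l'état d'équilibre associé au potentiel géométrique $- \log f_t'$ : c'est l'unique mesure $f_t$-invariante réalisant le supremum du principe variationnel qui définit $P_t\(- \log f_t'\)$, et l'on a de plus $P_t\(- \log f_t'\) = 0$. Comme $f_t$ est dilatante, on a $f_t' > 0$ sur le cercle et il n'y a donc pas d'ambiguïté de valeur absolue dans ce potentiel.

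J'utiliserais ensuite la différentiabilité de la pression topologique sous une perturbation du potentiel : si $\phi$ est un potentiel höldérien admettant un unique état d'équilibre $\nu_\phi$ et $\psi$ une observable höldérienne, alors $u \mapsto P_t\(\phi + u \psi\)$ est dérivable en $0$, de dérivée $\s{S^1}{\psi}{\nu_\phi}$. Il ne resterait alors qu'à appliquer ceci avec $\phi = - \log f_t'$ et $\psi = g$ (analytique, donc höldérienne) et à identifier $\nu_\phi = \mu_t$ par le premier point, ce qui donne directement
\[
\left. \drond{}{u} P_t\(- \log f_t' + u g\) \right|_{u=0} = \s{S^1}{g}{\mu_t}
.\]

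Le cœur de la démonstration — et donc ce que je considérerais comme le principal obstacle si l'on souhaitait une preuve autonome plutôt qu'un renvoi à la littérature — réside dans la formule de dérivation de la pression. Celle-ci découle de l'analyticité de $u \mapsto P_t\(\phi + u\psi\)$, elle-même conséquence du trou spectral de l'opérateur de transfert (du type de celui de la proposition \ref{trouspec}) et de la théorie de perturbation analytique de la valeur propre dominante, la dérivée s'exprimant alors comme l'intégrale de $\psi$ contre l'état d'équilibre $\nu_\phi$. L'identification de cet état d'équilibre, pour le potentiel géométrique, avec la mesure $\mu_t$ est quant à elle tout à fait standard pour les applications dilatantes. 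Ces deux ingrédients étant des résultats classiques explicitement référencés, la preuve se réduit à leur combinaison.
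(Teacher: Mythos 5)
Votre proposition est correcte et suit exactement la même démarche que l'article : celui-ci ne donne d'ailleurs aucune démonstration et se contente de renvoyer aux mêmes références classiques (\S 7.12 et \S 7.28 de \cite{Ruelle2}), c'est-à-dire précisément aux deux faits que vous combinez (dérivabilité de la pression en la direction $g$, avec dérivée $\s{S^1}{g}{\nu_\phi}$, et identification de l'état d'équilibre du potentiel géométrique $-\log f_t'$ avec $\mu_t$). Notez au passage que vous avez correctement interprété le potentiel de l'énoncé comme $-\log f_t'$ (le prime manque dans l'énoncé, vraisemblablement une coquille, comme le confirme la définition de $z\(u,t\)$ juste après).
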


\begin{df}
Pour tous $u \in \R$ et $t \in \left]- \epsilon, \epsilon \right[$ on pose 
\[
z\(u,t\) = \exp\(-P_t\(-\log f_t ' -ug\)\)
.\]
L'application ainsi définie est analytique par le théorème des fonctions implicites et la proposition \ref{final}.
\end{df}

\begin{rmq}
Si $t \in \left]- \epsilon, \epsilon \right[$, alors il est classique que $z\(0,t\) = 1$ (voir par exemple \cite{Mane}, page 230).
\end{rmq}

Nous rappelons maintenant l'argument de \cite{Poll} pour établir une formule pour la réponse linéaire à partir de la fonction $d$.

\begin{prop}\label{expr}
Pour tout $t \in \left]- \epsilon, \epsilon \right[$, on a :
\[
\s{S^1}{g}{\mu_t} = - \frac{\drond{d}{u}\(1,0,t\)}{\drond{d}{z}\(1,0,t\)}
.\]
\end{prop}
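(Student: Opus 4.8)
The plan is to exploit the fact, furnished by Proposition~\ref{final}, that $z\(u,t\)$ is a zero of the holomorphic function $z \mapsto d\(z,u,t\)$: for every $u \in \R$ and $t \in \left]-\epsilon,\epsilon\right[$ one has the identity $d\(z\(u,t\),u,t\) = 0$. Fixing $t$ and regarding this as an identity in $u$, I would differentiate it with respect to $u$, which is legitimate since $d$ is holomorphic (Proposition~\ref{ppal}) and $z\(\cdot,t\)$ is analytic. The chain rule then gives
\[
\drond{d}{z}\(z\(u,t\),u,t\)\,\drond{z}{u}\(u,t\) + \drond{d}{u}\(z\(u,t\),u,t\) = 0 .
\]

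Next I would evaluate at $u=0$. Recall that $z\(0,t\)=1$, so the previous identity becomes
\[
\drond{d}{z}\(1,0,t\)\,\drond{z}{u}\(0,t\) + \drond{d}{u}\(1,0,t\) = 0 .
\]
The crucial input here is that, again by Proposition~\ref{final}, the zero $z\(u,t\)$ is \emph{simple}; hence $\drond{d}{z}\(1,0,t\) \neq 0$ and I may solve
\[
\drond{z}{u}\(0,t\) = - \frac{\drond{d}{u}\(1,0,t\)}{\drond{d}{z}\(1,0,t\)} .
\]

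It then remains to identify $\drond{z}{u}\(0,t\)$ with $\s{S^1}{g}{\mu_t}$. Writing $z\(u,t\) = \exp\(-P_t\(-\log f_t' - ug\)\)$ and taking the logarithmic derivative in $u$ at $u=0$, where $z\(0,t\)=1$, one gets
\[
\drond{z}{u}\(0,t\) = - \left. \drond{}{u}\(P_t\(-\log f_t' - ug\)\)\right|_{u=0} .
\]
By the chain rule in $u$ (the extra sign coming from the $-ug$ sitting inside the pressure) together with Proposition~\ref{class}, the right-hand side equals $\s{S^1}{g}{\mu_t}$. Substituting into the preceding relation yields the claimed formula. The only genuinely delicate point is the justification that one may differentiate the implicit relation $d\(z\(u,t\),u,t\) = 0$ and that the denominator does not vanish; both are consequences of the simplicity of the smallest-modulus zero established in Proposition~\ref{final}, so I expect no serious obstacle beyond careful bookkeeping of signs.
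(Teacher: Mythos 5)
Your proposal is correct and follows essentially the same route as the paper: differentiate the implicit relation $d\(z\(u,t\),u,t\)=0$ in $u$, evaluate at $u=0$ using $z\(0,t\)=1$, invoke the simplicity of the zero from Proposition~\ref{final} to get $\drond{d}{z}\(1,0,t\)\neq 0$, and identify $\drond{z}{u}\(0,t\)$ with $\s{S^1}{g}{\mu_t}$ via Proposition~\ref{class}. Your sign bookkeeping (the minus in the exponential cancelling the minus in $-ug$) is in fact slightly more explicit than the paper's one-line appeal to Proposition~\ref{class}.
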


\begin{proof}
Dérivant l'équation $d\(z\(u,t\),u,t\) = 0$ par rapport à $u$, il vient
\[
\drond{z}{u}\(0,t\) \drond{d}{z}\(1,0,t\) + \drond{d}{u}\(1,0,t\) = 0
\]
or, par la proposition \ref{class}, on a
\[
\drond{z}{u}\(0,t\) = z\(0,t\) \(\s{S^1}{g}{\mu_t} \) = \s{S^1}{g}{\mu_t}
\]
et
\[
\drond{d}{z}\(1,0,t\) \neq 0
\]
puisque $1$ est racine simple de $z \mapsto d\(z,0,t\)$.
\end{proof}

\begin{cor}
L'application $t \mapsto \s{S^1}{g}{\mu_t}$ est analytique.
\end{cor}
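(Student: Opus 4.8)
Le plan est de partir directement de l'expression de $\s{S^1}{g}{\mu_t}$ fournie par la proposition \ref{expr}, à savoir
\[
\s{S^1}{g}{\mu_t} = - \frac{\drond{d}{u}\(1,0,t\)}{\drond{d}{z}\(1,0,t\)}
,\]
et de vérifier que le numérateur et le dénominateur définissent des fonctions analytiques de $t$, le dénominateur ne s'annulant pas. L'analyticité d'un quotient de deux fonctions analytiques dont le dénominateur ne s'annule pas permettra alors de conclure.

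Je commencerais par invoquer la proposition \ref{ppal} : l'application $d$ est la restriction aux arguments réels de l'application holomorphe $\(z,u,t\) \mapsto \det\(I - z \L_{u,t}\)$, dont l'holomorphie provient de la dépendance holomorphe du noyau nucléaire établie au corollaire \ref{hol}. Les dérivées partielles d'une fonction holomorphe de plusieurs variables étant encore holomorphes, $\drond{d}{u}$ et $\drond{d}{z}$ sont elles aussi restrictions d'applications holomorphes. En fixant $\(z,u\) = \(1,0\)$ et en restreignant aux $t$ réels, on obtient que $t \mapsto \drond{d}{u}\(1,0,t\)$ et $t \mapsto \drond{d}{z}\(1,0,t\)$ sont analytiques, comme restrictions à une tranche réelle de fonctions holomorphes d'une variable complexe.

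Il reste à vérifier que le dénominateur ne s'annule pas. C'est exactement le fait déjà utilisé dans la démonstration de la proposition \ref{expr} : par la proposition \ref{final}, pour $t$ réel le point $z = \exp\(-P_t\(-\log f_t'\)\) = z\(0,t\) = 1$ est le zéro de plus petit module de $z \mapsto d\(z,0,t\)$, et il est simple, de sorte que $\drond{d}{z}\(1,0,t\) \neq 0$ pour tout $t \in \left]- \epsilon, \epsilon \right[$. Le quotient ci-dessus est donc analytique en $t$, ce qui donne le résultat. Il n'y a ici pas d'obstacle véritable, tous les ingrédients ayant été mis en place en amont ; le seul point méritant un soin formel est la justification que les dérivées partielles d'une application holomorphe restent holomorphes et que la restriction à la tranche réelle préserve l'analyticité.
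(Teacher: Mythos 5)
Votre démonstration est correcte et suit exactement le chemin implicite du papier : le corollaire y est énoncé sans preuve car il découle immédiatement de la formule de la proposition \ref{expr}, de l'analyticité de $d$ (proposition \ref{ppal}, donc de ses dérivées partielles) et de la non-annulation de $\drond{d}{z}\(1,0,t\)$ due à la simplicité du zéro $z=1$ (proposition \ref{final}). Vous avez simplement explicité ces trois ingrédients, ce qui est fidèle à l'argument attendu.
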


On déduit immédiatement de la proposition \ref{expr}, par dérivation, une formule pour la réponse linéaire.

\begin{cor}\label{derivee}
La dérivée de l'application $t \mapsto \s{S^1}{g}{\mu_t}$ est
\[
t \mapsto - \frac{\ddeux{d}{u}{t}\(1,0,t\)}{\drond{d}{z}\(1,0,t\)} + \frac{\ddeux{d}{z}{t}\(1,0,t\) \drond{d}{u}\(1,0,t\)}{\(\drond{d}{z}\(1,0,t\) \)^2}  
\]
ou encore
\[
t \mapsto - \frac{\ddeux{d}{u}{t}\(1,0,t\)}{\drond{d}{z}\(1,0,t\)} - \s{S^1}{g}{\mu_t} \frac{\ddeux{d}{z}{t}\(1,0,t\)}{\drond{d}{z}\(1,0,t\)}
.\]
\end{cor}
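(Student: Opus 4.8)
Le plan est de dériver directement par rapport à $t$ l'expression de $\s{S^1}{g}{\mu_t}$ fournie par la proposition \ref{expr}. D'abord je remarquerais que la fonction $d$ est analytique au voisinage des points $\(1,0,t\)$ d'après la proposition \ref{ppal} : ses dérivées partielles d'ordre deux existent donc et commutent (théorème de Schwarz), de sorte que les notations $\ddeux{d}{u}{t}$ et $\ddeux{d}{z}{t}$ ont bien un sens. De plus, le dénominateur $\drond{d}{z}\(1,0,t\)$ ne s'annule pas : comme on l'a déjà utilisé dans la démonstration de la proposition \ref{expr}, le point $1$ est racine simple de $z \mapsto d\(z,0,t\)$ par la proposition \ref{final}. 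Le quotient apparaissant dans la proposition \ref{expr} est ainsi une fonction analytique de $t$ que l'on peut dériver sans précaution supplémentaire.

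J'appliquerais ensuite la règle de dérivation d'un quotient. En posant $A\(t\) = \drond{d}{u}\(1,0,t\)$ et $B\(t\) = \drond{d}{z}\(1,0,t\)$, on a $\s{S^1}{g}{\mu_t} = - A\(t\)/B\(t\)$, avec $A'\(t\) = \ddeux{d}{u}{t}\(1,0,t\)$ et $B'\(t\) = \ddeux{d}{z}{t}\(1,0,t\)$ (par analyticité et symétrie des dérivées secondes). Il vient alors
\[
\drond{}{t}\(-\frac{A}{B}\) = -\frac{A'}{B} + \frac{A B'}{B^2},
\]
ce qui est exactement la première formule annoncée.

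Pour obtenir la seconde formule, je substituerais dans le terme $\frac{A B'}{B^2}$ l'identité $A\(t\) = - \(\s{S^1}{g}{\mu_t}\) B\(t\)$ provenant de la proposition \ref{expr} : un facteur $B\(t\)$ se simplifie et il reste $- \(\s{S^1}{g}{\mu_t}\) \frac{B'\(t\)}{B\(t\)}$, d'où la seconde expression. Le calcul lui-même est en tout point élémentaire ; la seule chose à soigner est la régularité de $d$ et la non-annulation de $B$, déjà établies plus haut. Il n'y a donc pas d'obstacle substantiel une fois la proposition \ref{expr} et l'analyticité de $d$ acquises, ce qui justifie le caractère \emph{immédiat} de la dérivation.
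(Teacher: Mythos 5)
Votre preuve est correcte et suit exactement la démarche du texte : le corollaire y est obtenu « immédiatement de la proposition \ref{expr}, par dérivation », c'est-à-dire précisément la dérivation du quotient $-\drond{d}{u}\(1,0,t\)/\drond{d}{z}\(1,0,t\)$ que vous effectuez, la seconde formule s'obtenant par la même substitution de la proposition \ref{expr}. Vos précisions sur l'analyticité de $d$ (proposition \ref{ppal}), la symétrie des dérivées secondes et la non-annulation de $\drond{d}{z}\(1,0,t\)$ (racine simple, proposition \ref{final}) explicitent simplement ce que le texte laisse implicite.
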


Nous allons maintenant expliciter plus avant les formules du corollaire \ref{derivee}, jusqu'à un point qui permettrait un calcul numérique(voir \S 4 de \cite{Poll}).

On note pour $t \in \left]- \epsilon, \epsilon \right[$, $u \in \R$ et $n$ entier naturel non-nul
\begin{equation}\label{un}
b_n \(u,t\) = \sum_{x \in \fix{f_t^n}} \frac{e^{-u g_{t,n}\(x\)}}{\(f_t^n\)'\(x\) - 1} 
\end{equation}
et on écrit
\begin{equation}\label{developpement}
d\(z,u,t\) = \sum_{n=0}^{+ \infty} a_n\(u,t\) z^n.
\end{equation}
On a alors
\begin{equation}
a_0\ = 1, \drond{a_0}{u} = 0, \ddeux{a_0}{u}{t}  = 0
\end{equation}
et pour tout entier naturel non-nul $n$ :
\begin{equation}
a_n  = - \frac{1}{n} \sum_{j=0}^{n-1} a_j b_{n-j},
\end{equation}
\begin{equation}
\drond{a_n}{u}  = - \frac{1}{n} \sum_{j=0}^{n-1} \(\drond{a_j}{u} b_{n-j}+ a_j \drond{b_{n-j}}{u} \),
\end{equation}
\begin{equation}
\drond{a_n}{t}  = - \frac{1}{n} \sum_{j=0}^{n-1} \(\drond{a_j}{t} b_{n-j}+ a_j \drond{b_{n-j}}{t} \),
\end{equation}
et
\begin{equation}
\ddeux{a_n}{u}{t}  = - \frac{1}{n} \sum_{j=0}^{n-1} \( \ddeux{a_j}{u}{t} b_{n-j}  + \drond{a_j}{u}\drond{b_{n-j}}{t} + \drond{a_j}{t} \drond{b_{n-j}}{u} + a_j \ddeux{b_{n-j}}{u}{t} \)
.\end{equation}

\begin{cor}\label{formula}
La formule du corollaire \ref{derivee} devient avec ces notations :
\begin{equation}\label{sommes}
\left. \drond{}{t} \( \s{S^1}{g}{\mu_t} \) \right|_{t=0} = -\frac{\sum_{n=1}^{+ \infty} \ddeux{a_n}{u}{t} \(0,0\) }{\sum_{n=1}^{+\infty} n a_n\(0,0\)} + \frac{\(\sum_{n=1}^{+ \infty} n \drond{a_n}{t} \(0,0\)  \)\( \sum_{n=1}^{+ \infty} \drond{a_n}{u}\(0,0\) \)}{\(\sum_{n=1}^{+\infty} n a_n\(0,0\)\)^2}
.\end{equation}
\end{cor}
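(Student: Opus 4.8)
The plan is to begin from the first expression for the derivative given in Corollary \ref{derivee}, specialize it to $t = 0$, and then rewrite each of the four partial derivatives of $d$ that appear there — namely $\drond{d}{z}$, $\drond{d}{u}$, $\ddeux{d}{u}{t}$ and $\ddeux{d}{z}{t}$, all taken at $(z,u,t) = (1,0,0)$ — as a series in the coefficients $a_n(u,t)$ of the power expansion \eqref{developpement}. The whole content of the statement is this passage from derivatives of the function $d$ to series obtained by differentiating \eqref{developpement} term by term.

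First I would record the four elementary identities that result from formally differentiating \eqref{developpement}. Differentiating in $z$ and setting $z = 1$ gives $\drond{d}{z}(1,0,0) = \sum_{n \geqslant 1} n a_n(0,0)$ and $\ddeux{d}{z}{t}(1,0,0) = \sum_{n \geqslant 1} n \drond{a_n}{t}(0,0)$, where the factor $n$ kills the $n = 0$ term. Differentiating in $u$ instead yields $\drond{d}{u}(1,0,0) = \sum_{n \geqslant 0} \drond{a_n}{u}(0,0)$ and $\ddeux{d}{u}{t}(1,0,0) = \sum_{n \geqslant 0} \ddeux{a_n}{u}{t}(0,0)$; since $a_0 \equiv 1$, the relations $\drond{a_0}{u} = 0$ and $\ddeux{a_0}{u}{t} = 0$ allow both of these sums to start at $n = 1$.

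The only point that genuinely needs justification, and hence the main (if fairly mild) obstacle, is the legitimacy of interchanging the $u$- and $t$-derivatives with the infinite sum over $n$, together with the term-by-term $z$-differentiation evaluated at $z = 1$. For this I would invoke the joint holomorphy of $d$ furnished by Proposition \ref{ppal}: for fixed $(u,t)$ the map $z \mapsto d(z,u,t)$ is entire (it is a Fredholm determinant of a nuclear operator, see Corollary \ref{formule}), so its Taylor series converges at $z = 1$ and may be differentiated termwise there. Writing $a_n(u,t) = \frac{1}{2 i \pi} \s{\partial \D{0}{\rho}}{\frac{d(z,u,t)}{z^{n+1}}}{z}$ and differentiating under the integral sign — permissible because $d$ is holomorphic in $(u,t)$ and the integrand is dominated uniformly on the contour — identifies $\drond{a_n}{u}$, $\drond{a_n}{t}$ and $\ddeux{a_n}{u}{t}$ with the Taylor coefficients in $z$ of the corresponding derivatives of $d$, which is exactly what the four identities above require.

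Finally I would substitute the four identities into the first displayed formula of Corollary \ref{derivee} taken at $t = 0$: the term $-\ddeux{d}{u}{t}(1,0,0)/\drond{d}{z}(1,0,0)$ becomes $-\(\sum_{n \geqslant 1} \ddeux{a_n}{u}{t}(0,0)\)/\(\sum_{n \geqslant 1} n a_n(0,0)\)$, while $\ddeux{d}{z}{t}\,\drond{d}{u}/\(\drond{d}{z}\)^2$ becomes the product $\(\sum_{n \geqslant 1} n \drond{a_n}{t}(0,0)\)\(\sum_{n \geqslant 1} \drond{a_n}{u}(0,0)\)$ divided by $\(\sum_{n \geqslant 1} n a_n(0,0)\)^2$. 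This is precisely the right-hand side of \eqref{sommes}, which concludes the argument.
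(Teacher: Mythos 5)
Your proposal is correct and follows exactly the route the paper intends: Corollary \ref{formula} is stated there as an immediate substitution of the term-by-term derivatives of the expansion \eqref{developpement} into the first formula of Corollary \ref{derivee}, which is precisely what you carry out. Your added justification of the interchange of $u$- and $t$-derivatives with the sum over $n$ (via Cauchy's formula for the coefficients and the joint holomorphy from Proposition \ref{ppal}, together with the entirety of $z \mapsto d(z,u,t)$ to evaluate at $z=1$) is the one point the paper leaves implicit, and you handle it correctly.
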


Ce qui permet un calcul explicite dès que l'on sait calculer $b_n, \drond{b_n}{u}, \drond{b_n}{t}$ et $\ddeux{b_n}{u}{t}$ en $\(0,0\)$.

Pour tout entier naturel $n$ non-nul et tout $x \in S^1$ on note $X_n\(x\)$ la dérivée en $0$ de l'application $t \mapsto f_t^n \(x\)$, de manière moins rigoureuse :
\[
X_n \(x\) = \left. \drond{}{t} \( f_t^n\(x\) \) \right|_{t=0}
.\]

En notant $X = X_1$, on a alors pour tout entier naturel non-nul $n$ la formule 
\begin{equation}
X_n = \sum_{k=0}^{n-1} X \circ f_0^k \(f_0^{n-1-k}\)' \circ f_0^k
.\end{equation}

Pour calculer les dérivées de $b_n$, on a les formules :
\begin{equation}
\drond{b_n}{u} \(0,t\) = - \sum_{x \in \fix{f_t^n}} \frac{g_{t,n}\(x\)}{\(f_t^n\)'\(x\) - 1}
\end{equation}
\begin{equation}
\drond{b_n}{t}\(0,0\) = - \sum_{x \in \fix{f_0^n}} \frac{X_n'\(x\) + \(f_0^n\)''\(x\) \frac{X_n\(x\)}{1-\(f_0^n\)'\(x\)}}{\(\(f_0^n\)'\(x\)-1\)^2}
\end{equation}
et
\begin{equation}
\ddeux{b_n}{u}{t}\(0,0\) = \sum_{x \in \fix{f_0^n}}\( \frac{X_n' + \(f_0^n\)'' \frac{X_n}{ 1-\(f_0^n\)'}}{\(\(f_0^n\)' - 1 \)^2} g_{0,n} + \frac{1}{\(f_0^n\)' - 1} \sum_{k=0}^{n-1} \frac{X_n \circ f_0^k g' \circ f_0^k}{ \(f_0^n\)' \circ f_0^k-1}\)\(x\)
.\end{equation}

Enfin, on termine cette sous-partie par un résultat qui dans certains cas simples donne une autre méthode de calcul pour la réponse linéaire. N'ayant pas réussi à comprendre la preuve du théorème 4.1 de \cite{Poll} nous donnons ici un énoncé alternatif.

\begin{lm}
Supposons qu'il existe un réel $A$ tel que
\[
\s{S^1}{g}{\mu} = 0,
\]
et les séries
\[
\sum_{n \geqslant 1} \(\frac{1}{n} \ddeux{b_n}{u}{t} \(0,0\) - A\)
\textrm{ et }
\sum_{n \geqslant 1} \frac{1}{n} \drond{b_n}{u} \(0,0\)
\]
convergent (où les $b_n$ sont ceux définis par \eqref{un}). Alors
\[
\left. \drond{}{t} \(\s{S^1}{g}{\mu_t} \) \right|_{t=0} = - A = - \lim_{n \to + \infty} \frac{1}{n} \ddeux{b_n}{u}{t} \(0,0\)
.\]

\end{lm}

\begin{proof}
La deuxième formule donnée dans le corollaire \ref{derivee} assure que 
\[
\left. \drond{}{t} \(\s{S^1}{g}{\mu_t} \) \right|_{t=0} = - \frac{\ddeux{d}{u}{t}\(1,0,0\)}{\drond{d}{z}\(1,0,0\)}
.\]
Or pour $|z| <1$ on a par le calcul
\begin{align*}
\ddeux{d}{u}{t}\(z,0,0\)  & = - \( \sum_{n \geqslant 1} \ddeux{b_n}{u}{t}\(0,0\) \frac{z^n}{n} \) d\(z,0,0\) - \( \sum_{n \geqslant 1} \drond{b_n}{u}\(0,0\)\frac{z^n}{n}\) \drond{d}{t}\(z,0,0\) \\
 & = A \frac{d\(z,0,0\)}{z-1} - \( \sum_{n \geqslant 1} \( \frac{1}{n} \ddeux{b_n}{u}{t}\(0,0\) - A \)z^n \) d\(z,0,0\) \\ & - \( \sum_{n \geqslant 1} \drond{b_n}{u}\(0,0\)\frac{z^n}{n}\) \drond{d}{t}\(z,0,0\). \\
\end{align*}
Or $d\(1,0,0\) = \drond{d}{t}\(1,0,0\) = 0$ et les sommes qui apparaissent en facteur ont une limite en $z=1$ (par valeurs réelles) par le théorème d'Abel et donc par passage à la limite :
\[
\ddeux{d}{u}{t}\(1,0,0\) = A \drond{d}{z}\(1,0,0\)
.\]
D'où le résultat annoncé.

\end{proof}

\subsection{Vitesse de convergence}\label{14}

Nous allons explorer la vitesse à laquelle les $a_n$ (définis par \eqref{developpement}) et leurs dérivées tendent vers $0$. On se base pour cela sur une remarque de \cite{Groth}, page 64 de la deuxième partie, rectifiée par Fried dans \cite{fried} (travaillant en dimension $1$, cette rectification n'a pas de conséquence ici).

\begin{prop}\label{injnuc}
Il existe une suite $\(\phi_n\)_{n \in \N}$ d'éléments de $\A{\mathcal{C}_{R''}}$, une suite $\(l_n\)_{n \in \N}$ d'éléments de $\A{\mathcal{C}_R}'$, et des constantes $C > 0$ et $0 < \eta <1$ telles que 
\[
\forall n \in \N : \|\phi_n\| \|l_n\| \leqslant C \eta^n
\textrm{ et }
\sum_{n \in \N} l_n \otimes \phi_n
\]
est un noyau de $i$, l'injection de $\A{\mathcal{C}_R}$ dans $\A{\mathcal{C}_{R''}}$ introduite dans la deuxième sous-partie.
\end{prop}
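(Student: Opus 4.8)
The plan is to write down explicitly the Fourier, or equivalently Laurent, decomposition of functions holomorphic on a strip, which in the $\psi$-picture is exactly the Laurent expansion on the annulus already used in Lemma \ref{calc}. For $m \in \Z$ I would take for $\phi_m$ the Fourier mode $x \mapsto \exp\(2i\pi m x\)$ (the restriction of $\psi^m$), viewed as an element of $\A{\mathcal{C}_{R''}}$, and for $l_m \in \A{\mathcal{C}_R}'$ the $m$-th Fourier coefficient
\[
l_m\(\phi\) = \s{S^1}{\phi\(x\)\exp\(-2i\pi m x\)}{x}.
\]
Because $R'' < R$, the closed strip $\bar{\mathcal{C}_{R''}}$ is contained in the open strip $\mathcal{C}_R$, so the Fourier series of any $\phi \in \A{\mathcal{C}_R}$ converges uniformly on $\bar{\mathcal{C}_{R''}}$ to $\phi$ (this is the Laurent expansion transported by the isometry of Proposition \ref{defin}). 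Hence, evaluated at any point of $\mathcal{C}_{R''}$, the series $\sum_m l_m \otimes \phi_m$ returns the value of $i\phi$ there, i.e. it is a kernel of $i$.

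Next I would estimate the two norms. On $\bar{\mathcal{C}_{R''}}$ one has $\left|\exp\(2i\pi m x\)\right| = \exp\(-2\pi m \Im x\)$ with $|\Im x| \leqslant R''$, so $\|\phi_m\| = \exp\(2\pi |m| R''\)$. For the functional I fix any $R'$ with $R'' < R' < R$; since $\phi$ is holomorphic on $\mathcal{C}_R$, Cauchy's theorem lets me move the contour defining $l_m$ to the horizontal circle $\Im x = -R'$ when $m > 0$ and to $\Im x = R'$ when $m < 0$, which yields $\|l_m\| \leqslant \exp\(-2\pi |m| R'\)$. Multiplying the two bounds gives
\[
\|l_m\|\,\|\phi_m\| \leqslant \exp\(-2\pi |m| \(R' - R''\)\),
\]
which decays geometrically in $|m|$ precisely because $R' > R''$.

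Finally I would re-index $\Z$ by $\N$, say in the order $0, 1, -1, 2, -2, \dots$, so that the $n$-th term corresponds to some $m$ with $|m| \geqslant \lfloor n/2 \rfloor$. Taking $\eta = \exp\(-\pi\(R'-R''\)\) < 1$ and absorbing the finitely many discrepancies into a constant $C > 0$ gives $\|\phi_n\|\,\|l_n\| \leqslant C\eta^n$ for all $n$. In particular $\sum_n \|l_n\|\,\|\phi_n\| < +\infty$, so $\sum_n l_n \otimes \phi_n$ converges in $\A{\mathcal{C}_R}' \oh \A{\mathcal{C}_{R''}}$ to a kernel of $i$ (and the geometric decay shows $i$ is of order $0$). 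The estimates are elementary; the only points requiring a little care are the justification that the contour may legitimately be pushed toward the boundary of the strip — handled by working at $\Im x = \pm R'$ with $R' < R$ so that $\phi$ stays inside its domain of holomorphy — and the bookkeeping of the re-indexing, neither of which is a genuine obstacle.
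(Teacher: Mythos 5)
Your proposal is correct and follows essentially the same route as the paper: the paper's own proof uses the identical decomposition, namely the monomials $z^m$ on the annulus $\mathbb{A}_{R''}$ together with the contour-integral coefficient functionals, which is exactly your Fourier-mode construction transported by the isometry of Proposition \ref{defin}. The only cosmetic difference is that the paper pushes the contours to the boundary circles of $\mathbb{A}_R$ (yielding $\|l_m\| \leqslant e^{-2\pi |m| R}$) whereas you stop at an intermediate level $R'$ with $R'' < R' < R$; both give the required geometric decay of $\|l_m\|\,\|\phi_m\|$.
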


\begin{proof}
Il suffit de montrer un résultat analogue pour l'opérateur $\iota$ de restriction de $\A{\mathbb{A}_R}$ dans $\A{\mathbb{A}_{R''}}$. Pour tout entier $m$ on note $\phi_m$ l'élément de $\A{\mathbb{A}_{R''}}$ qui à $z$ associe $z^m$ et $l_m$ l'élément de $\A{\mathbb{A}_{R''}}$ qui à $\phi$ associe 
\[
l_m \(\phi\) = \frac{1}{2i \pi} \s{\partial \psi S^1}{\frac{\phi \(z\)}{z^{m+1}}}{z}
.\]
On a alors pour tout entier $m$ :
\[
\|\phi_m\| \leqslant e^{2 \pi |m| R''}
\]
et
\[
\|l_m\| \leqslant e^{- 2 \pi |m| R}
\]
(en calculant l'intégrale sur les cercles extérieurs comme dans la preuve de \ref{calc}).

La formule du développement en série de Laurent d'une fonction définie sur un anneau assure que
\[
\sum_{m \in \Z} l_m \otimes \phi_m
\]
est bien un noyau de $\iota$.
\end{proof}

\begin{prop}\label{conv1}
Soit $\mathcal{B}$ un Banach et $u \in \mathcal{B}' \oh \mathcal{B}$ tel qu'il existe une suite $\(e_n\)_{n \in \N}$ de $\mathcal{B}$ de norme $1$, une suite $\(f_n\)_{n \in \N}$ d'éléments de $\mathcal{B}'$ de norme $1$, une suite $\(\lambda_n\)_{n \in \N}$ de nombres complexes et des constantes $C >0$ et $0 < \eta <1$ telles que 
\[
\forall n \in \N : |\lambda_n| \leqslant C \eta^n
\]
et
\[
v = \sum_{n \in \N} \lambda_n f_n \otimes e_n
.\] 

Si on écrit 
\[
\det\(I + zv\) = \sum_{n \in \N} a_n z^n
\]
pour $z \in \C$, on a alors 
\[
\forall n \in \N : |a_n| \leqslant C^n \frac{\eta^{\frac{n\(n-1\)}{2}}}{\prod_{k=1}^n \(1 - \eta^k\)} n^{\frac{n}{2}}
.\]
\end{prop}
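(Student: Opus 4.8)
The plan is to recognise $a_n$ as the trace of the $n$-th antisymmetric tensor power of $v$. Since $\sum_{k \in \N} |\lambda_k| \leqslant C \sum_{k} \eta^k < + \infty$, the operator $v$ is nuclear and its déterminant de Fredholm is entire, with
\[
\det\(I + z v\) = \sum_{n \in \N} z^n \tr\(\Lambda^n v\)
,\]
so that $a_n = \tr\(\Lambda^n v\)$. First I would feed the decomposition $v = \sum_k \lambda_k f_k \otimes e_k$ into Grothendieck's formula for this trace (voir \cite{Groth}), which gives
\[
a_n = \frac{1}{n!} \sum_{k_1, \dots, k_n} \lambda_{k_1} \cdots \lambda_{k_n} \det\( f_{k_i}\(e_{k_j}\) \)_{1 \leqslant i, j \leqslant n}
.\]
The key observation is that any term in which two of the indices coincide, say $k_i = k_j$, vanishes, because the matrix $\( f_{k_p}\(e_{k_q}\) \)_{p,q}$ then has two equal rows; moreover the summand is unchanged when the indices $k_1, \dots, k_n$ are permuted simultaneously, as this permutes rows and columns of the matrix by the same permutation and so leaves the determinant invariant. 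Reducing the sum to strictly increasing tuples I obtain
\[
a_n = \sum_{0 \leqslant k_1 < \dots < k_n} \lambda_{k_1} \cdots \lambda_{k_n} \det\( f_{k_i}\(e_{k_j}\) \)_{1 \leqslant i, j \leqslant n}
.\]

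Next I would bound the determinants by Hadamard's inequality. As $\|e_k\| = \|f_k\| = 1$, every entry satisfies $|f_{k_i}\(e_{k_j}\)| \leqslant 1$, so each of the $n$ rows has Euclidean norm at most $\sqrt{n}$ and hence $|\det\( f_{k_i}\(e_{k_j}\) \)| \leqslant n^{\frac{n}{2}}$. Together with $|\lambda_{k_i}| \leqslant C \eta^{k_i}$ this yields
\[
|a_n| \leqslant C^n n^{\frac{n}{2}} \sum_{0 \leqslant k_1 < \dots < k_n} \eta^{k_1 + \dots + k_n}
.\]

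It then remains to evaluate the combinatorial sum. The substitution $k_i = j_i + \(i - 1\)$ sets up a bijection between strictly increasing tuples $0 \leqslant k_1 < \dots < k_n$ and non-decreasing tuples $0 \leqslant j_1 \leqslant \dots \leqslant j_n$, under which $\sum_{i} k_i = \frac{n\(n-1\)}{2} + \sum_{i} j_i$. Using the classical generating-function identity $\sum_{0 \leqslant j_1 \leqslant \dots \leqslant j_n} \eta^{j_1 + \dots + j_n} = \prod_{k=1}^n \(1 - \eta^k\)^{-1}$ for partitions into at most $n$ parts, I get
\[
\sum_{0 \leqslant k_1 < \dots < k_n} \eta^{k_1 + \dots + k_n} = \frac{\eta^{\frac{n\(n-1\)}{2}}}{\prod_{k=1}^n \(1 - \eta^k\)}
,\]
and substituting this back gives exactly the announced bound. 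The only genuinely delicate step is the justification of the explicit determinant formula for $a_n$ and the reduction to strictly increasing multi-indices; once this is in hand, the argument reduces to Hadamard's inequality and the partition identity, both routine.
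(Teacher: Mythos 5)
Your proposal is correct and follows essentially the same route as the paper: Grothendieck's explicit formula expressing the coefficients $a_n$ as sums over strictly increasing multi-indices of $\lambda_{i_1}\cdots\lambda_{i_n}\det(f_{i_p}(e_{i_q}))$, Hadamard's inequality giving the factor $n^{n/2}$, and the evaluation of $\sum_{0 \leqslant i_1 < \dots < i_n} \eta^{i_1+\dots+i_n}$ as $\eta^{n(n-1)/2}/\prod_{k=1}^{n}(1-\eta^{k})$. The only differences are presentational: the paper cites the increasing-index formula directly from \cite{Groth} (page 17 of the second part) where you re-derive it from the symmetrized $\frac{1}{n!}$ version, and the paper states the partition identity without the bijective justification you supply.
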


\begin{proof}
Pour tout entier naturel $n$ on a (voir par exemple la page 17 de la deuxième partie de \cite{Groth})
\[
a_n = \sum_{0 \leqslant i_1 < \dots < i_n} \lambda_{i_1} \dots \lambda_{i_n} \det\(\(f_{i_p} \(e_{i_q}\)\)_{1 \leqslant p,q \leqslant n} \)
\]
et donc par le théorème d'Hadamard
\[
|a_n| \leqslant \sum_{0 \leqslant i_1 < \dots < i_n } C^n \eta^{i_1 + \dots + i_n} n^{\frac{n}{2}} = C^n \frac{\eta^{\frac{n\(n-1\)}{2}}}{\prod_{k=1}^n \(1 - \eta^k\)} n^{\frac{n}{2}}
.\]
\end{proof}

La proposition \ref{injnuc} assure que les opérateurs $\L_{u,t} = \widehat{\L}_{u,t} \circ i$ vérifient les hypothèses de la proposition \ref{conv1} pour une même valeur de $C$ lorsque $u$ et $t$ sont au voisinage de $0$, on obtient donc le résultat suivant.
\begin{prop}\label{vitconv}
Les $a_n$ définis par \eqref{developpement} vérifient
\[
a_n\(u,t\) \underset{n \to + \infty}{=} O \(\theta^{n^2}\)
\]
uniformément en $u$ et en $t$ dans un voisinage de $\(0,0\)$ dans $\C^2$, pour un certain $0< \theta <1$. Les dérivées partielles des $a_n$ vérifient la même estimation. 
\end{prop}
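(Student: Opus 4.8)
The plan is to assemble the explicit nuclear representation of $\L_{u,t}$ out of Proposition \ref{injnuc}, feed it into the abstract Hadamard-type estimate of Proposition \ref{conv1}, and then do the asymptotic bookkeeping that turns the resulting bound into a Gaussian decay. First I would make the factorization $\L_{u,t} = \widehat{\L}_{u,t} \circ i$ explicit at the level of kernels. Writing $i = \sum_n l_n \otimes \phi_n$ as in Proposition \ref{injnuc}, left composition with $\widehat{\L}_{u,t}$ gives the kernel $\sum_n l_n \otimes \widehat{\L}_{u,t}\phi_n \in \A{\mathcal{C}_R}' \oh \A{\mathcal{C}_R}$ of $\L_{u,t}$. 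Dropping the vanishing terms and normalizing, I set $f_n = l_n/\|l_n\|$, $e_n = \widehat{\L}_{u,t}\phi_n/\|\widehat{\L}_{u,t}\phi_n\|$ and $\lambda_n = \|l_n\|\,\|\widehat{\L}_{u,t}\phi_n\|$. Since $\|\widehat{\L}_{u,t}\phi_n\| \leq \|\widehat{\L}_{u,t}\|\,\|\phi_n\|$ and $\|l_n\|\,\|\phi_n\| \leq C\eta^n$, one gets $|\lambda_n| \leq \|\widehat{\L}_{u,t}\|\,C\eta^n$. By the continuity of $(u,t) \mapsto \widehat{\L}_{u,t}$ (Lemme \ref{Cont}), the norm $\|\widehat{\L}_{u,t}\|$ is bounded by some $M$ on a fixed neighborhood of $(0,0)$ in $\C^2$, so $|\lambda_n| \leq C'\eta^n$ with $C' = MC$ independent of $(u,t)$ there. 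This is precisely the hypothesis of Proposition \ref{conv1}, with the same $\eta$ and a uniform $C'$.

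Next I would apply Proposition \ref{conv1} to $\det(I - z\L_{u,t}) = \sum_n a_n(u,t) z^n$ (taking $v = \L_{u,t}$ and $z \mapsto -z$, which leaves the moduli of the coefficients unchanged), obtaining, uniformly on the neighborhood,
\[
|a_n(u,t)| \leq (C')^n \frac{\eta^{n(n-1)/2}}{\prod_{k=1}^n (1-\eta^k)}\, n^{n/2}.
\]
It then remains to read off the $O(\theta^{n^2})$ decay. The driving factor is $\eta^{n(n-1)/2} = (\sqrt{\eta})^{n^2}\,\eta^{-n/2}$. Since $\sum_k \eta^k < +\infty$, the infinite product $\prod_{k\geq 1}(1-\eta^k)$ converges to a strictly positive limit, so $1/\prod_{k=1}^n(1-\eta^k)$ stays bounded by a constant $K$. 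The remaining factors combine into $(C')^n \eta^{-n/2} n^{n/2} = \exp\bigl(n\log C' - \tfrac{n}{2}\log\eta + \tfrac{n}{2}\log n\bigr) = \exp(o(n^2))$. Fixing any $\theta$ with $\sqrt{\eta} < \theta < 1$, we then have $|a_n|/\theta^{n^2} \leq K \exp(o(n^2))\,(\sqrt{\eta}/\theta)^{n^2} \to 0$, because $\log(\sqrt{\eta}/\theta)$ is a fixed negative constant and dominates the $o(n^2)$ exponent. Hence $a_n(u,t) = O(\theta^{n^2})$ uniformly on the neighborhood, as the constant $K$ and the value $\theta$ do not depend on $(u,t)$.

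For the partial derivatives I would exploit holomorphy. By Corollaire \ref{hol} the kernel $(u,t) \mapsto L_{u,t}$ is holomorphic, hence so is $(u,t) \mapsto \det(I - zL_{u,t})$, and each coefficient $a_n(u,t)$ inherits holomorphy in $(u,t)$. Applying the uniform bound $a_n = O(\theta^{n^2})$ on a polydisc around $(0,0)$ and the Cauchy estimates over circles staying inside that polydisc yields the same $O(\theta^{n^2})$ bound for $\drond{a_n}{u}$, $\drond{a_n}{t}$ and $\ddeux{a_n}{u}{t}$ on a slightly smaller polydisc, the shrinkage of the radius affecting only the implied constant.

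The hard part will be the asymptotic bookkeeping of the second step rather than any conceptual difficulty: one must check that the Hadamard factor $n^{n/2}$, which outgrows every fixed exponential, is nonetheless overwhelmed by the Gaussian factor $(\sqrt{\eta})^{n^2}$. This is exactly what forces $\theta$ to be chosen strictly above $\sqrt{\eta}$ rather than equal to it. Everything else is a direct assembly of the earlier results, and the uniformity in $(u,t)$ is inherited from the local boundedness of $\widehat{\L}_{u,t}$ provided by Lemme \ref{Cont}.
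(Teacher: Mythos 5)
Your proposal is correct and follows exactly the paper's own route: compose the nuclear representation of $i$ from Proposition \ref{injnuc} with $\widehat{\L}_{u,t}$ (bounded uniformly near $(0,0)$ by Lemme \ref{Cont}) to verify the hypotheses of Proposition \ref{conv1} with a uniform constant, extract the $O\(\theta^{n^2}\)$ decay from the Hadamard bound by taking $\sqrt{\eta} < \theta < 1$, and treat the derivatives via the Cauchy formula. You have merely written out the asymptotic bookkeeping and the uniformity argument that the paper leaves implicit.
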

Le résultat sur les dérivées est obtenu par la formule de Cauchy. On a donc la même estimation sur les restes des séries dont les sommes apparaissent dans la formule \eqref{sommes} du corollaire \ref{formula} puisque 
\[
\sum_{k=n}^{+ \infty} \theta^{k^2} \underset{ n \to \infty}{\sim} \theta^{n^2}
.\]

\section{Difféomorphismes d'Anosov du tore}

Comme dans la partie précédente, il s'agit ici de définir un déterminant dynamique et de démontrer qu'il varie analytiquement en chacun de ses arguments, c'est le théorème \ref{gros} qui détaille la première partie du lemme 2.13 de \cite{Poll}.

La partie \ref{21} est dévouée à la définition du déterminant dynamique dans un cadre plus symbolique que celui qui nous intéresse, rappelant la construction de \cite{R1}. Nous montrons dans \S \ref{22} comment se comporte dans ce cadre le déterminant sous des perturbations analytiques.

Suivant \cite{R2}, nous appliquons dans \S \ref{23} ces résultats au cas des difféomorphismes d'Anosov du tore et nous démontrons le théorème \ref{gros}. Un argument essentiel sera l'usage de partitions de Markov et de la formule donnée par Manning dans \cite{Manning} pour compter les points périodiques d'un difféomorphisme Axiome A.

La construction de \S \ref{23} ne mettant pas en évidence l'entièreté du déterminant dynamique, nous en donnons une idée de preuve, inspirée de \cite{R2}, dans \S \ref{24}.

Enfin, \S \ref{25} est dédiée à une estimation de la vitesse à laquelle les coefficients du développement en série entière du déterminant dynamique et leurs dérivées tendent vers $0$ (proposition \ref{vitconv2}), donnant un résultat plus faible que la deuxième partie du lemme 2.13 de \cite{Poll}.

\subsection{Systèmes d'applications analytiques réelles hyperboliques}\label{21}

Cette sous-partie rappelle la construction faite par Rugh dans \cite{R1}. Certains points ont dû être très légèrement adaptés en prévision de la troisième sous-partie. Les preuves restent très proches de celles de \cite{R1} et sont données en annexe. 

\begin{df}
Soient $D_1,D_2,D_1'$ et $D_2'$ des disques dans $\C$. Une application $f$ holomorphe sur un voisinage de $D_1 \times D_2$ dans $\C^2$ à valeurs dans $\C^2$ est dite analytique réelle hyperbolique de $D_1 \times D_2$ sur $D_1' \times D_2'$ si, en notant $f = (f_1,f_2)$ on a :
\begin{itemize}
\item $f_1 \( \bar{D_1} \times \bar{D_2} \) \subset \bul{D_1'}$;
\item pour tout $w_1$ dans un voisinage ouvert $U_1$ de $\bar{D_1}$ et tout $z_2 $ dans un voisinage ouvert $U_2$ de $\bar{D_2'}$ il existe un unique $w_2 \in \bul{D_2} $ tel que $f_2\(w_1,w_2\) = z_2$. 
\end{itemize}
\end{df}

On utilisera le lemme suivant à maintes reprises dans la suite. L'usage de résultats similaires est classique, on en trouvera des exemples dans \cite{R1} mais aussi \cite{Ruelle1}. Le preuve est en annexe \ref{a1}.

\begin{lm}\label{pf}
Soit $U,D$ des ouverts de $\C^m$, $m$ entier naturel non nul, et $K$ une partie compacte de $\C^m$ tels que $K \subset U$ et $ \bar{U} \subset D$, $U$ étant connexe et borné. Soit $N$ un entier naturel et $V$ un ouvert de $\C^N$. Pour tout $x \in V$, on se donne une application holomorphe $\psi_x$ sur $D$ qui envoie $U$ dans $K$ de telle manière que l'application $\(x,z\) \mapsto \psi_x\(z\)$ soit holomorphe. Alors pour tout $x \in V$, l'application $\psi_x$ admet un unique point fixe $z_x$ dans $\bar{U}$ qui est en fait dans $U$ et dont la dépendance en $x$ est holomorphe. De plus, les valeurs propres de la différentielle de $\psi_x$ en $z_x$ sont de module strictement plus petit que $1$.
\end{lm}

\begin{df}
Un système d'applications analytiques réelle hyperboliques est la donnée :
\begin{itemize}
\item d'un ensemble non-vide $I$;
\item d'une matrice $a : I \times I \to \set{0,1}$;
\item pour tout $i \in I$ de deux disques dans $\C$, $D_1^i$ et $D_2^i$;
\item pour tout couple d'éléments $\(i,j\) \in I \times I$ tel que $a\(i,j\) = 1$ d'une application $f^{ij} = \(f_1^{ij},f_2^{ij}\)$ analytique réelle hyperbolique de $D_1^i\times D_2^i$ sur $D_1^j \times D_2^j$.
\end{itemize}
\end{df}

Dans toute la suite de cette partie, on se donne un système d'applications analytiques réelle hyperboliques avec les notations ci-dessus. Pour tout $\(i,j\) \in I \times I$ tel que $a\(i,j\) = 1$, on note 
\[
\phi_s^{ij} : \bar{D_1^i} \times \bar{D_2^j} \to \bul{D_2^i}
\]
qui à $w_1 \in \bar{D_1^i} $ et $z_2 \in \bar{D_2^j}$ associe l'unique solution $w_2 \in \bul{D_2^i}$ de $ f_2^{ij}\(w_1,w_2\) = z_2$. L'application $\phi_s^{ij}$ s'étend en une application holomorphe au voisinage de $\bar{D_1^i} \times \bar{D_2^j}$, toujours noté $\phi_s^{ij}$, par un raisonnement d'analyse complexe élémentaire. Pour $w_1 \in \bar{D_1^i} $ et $z_2 \in \bar{D_2^j}$ on note $\phi_u^{ij} \(w_1,z_2\) = f_1^{ij}\(w_1, \phi_s^{ij} \(w_1,z_2\)\) \in \bul{D_1^j}$ de telle manière que 
\[
f^{ij} \(w_1, \phi_s^{ij} \(w_1,z_2\)\) = \( \phi_u^{ij} \(w_1,z_2\) , z_2 \)
\]
et $\phi_u^{ij}$ s'étend en une application holomorphe au voisinage de $\bar{D_1^i} \times \bar{D_2^j}$.

Pour toute suite $i_0, \dots, i_n$ d'éléments de $I$ telle que $a\(i_j,i_{j+1}\)=1$ pour $j=1, \dots,n$, on note :
\[
f^{i_0 \dots i_n} = f^{i_{n-1} i_n} \circ f^{i_{n-2} i_{n-1}} \circ \dots \circ f^{i_0 i_1}
\]
(avec la convention que $f^{i_0} = I$) et on va définir par récurrence une application holomorphe $ \phi_s^{i_0 \dots i_n}$ sur un voisinage de $\bar{D_1^{i_0}} \times \bar{D_2^{i_n}}$ qui envoie $\bar{D_1^{i_0}} \times \bar{D_2^{i_n}}$ dans $\bul{D_2^{i_0}}$ et une application holomorphe $ \phi_u^{i_0 \dots i_n}$ sur un voisinage de $\bar{D_1^{i_0}} \times \bar{D_2^{i_n}}$ qui envoie $\bar{D_1^{i_0}} \times \bar{D_2^{i_n}}$ dans $\bul{D_1^{i_n}}$ et qui vérifieront en particulier
\begin{equation}\label{param}
f^{i_0 \dots i_n} \circ \(pr_1, \phi_s^{i_0 \dots i_n}\) = \(\phi_u^{i_0 \dots i_n}, pr_2 \)
,\end{equation}
où $pr_1$ et $pr_2$ désignent les projections respectivement sur les première et deuxième coordonnées.

Ces applications fournissent ce que Rugh appelle des \og pinning coordinates \fg{} dans \cite{R1} : l'application
\[
\(w_1,z_2\) \in \bul{D_1^{i_0}} \times \bul{D_2^{i_n}} \mapsto \(w_1,\phi_s^{i_0 \dots i_n}\(w_1,z_2\)\)
\]
paramètre les trajectoires pour $f^{i_0 i_1}, \dots , f^{i_{n-1} i_n}$ qui restent dans les rectangles qu'on s'est donnés.

Pour le cas $n=1$, on vient de définir ces applications. Pour le pas de récurrence si $i_0,\dots,i_{n+1}$ est tel que $a\(i_j,i_{j+1}\)=1$ pour $j=1, \dots,n+1$ et que $ \phi_s^{i_0 \dots i_n}$ et $ \phi_u^{i_0 \dots i_n}$ sont définies, et si $\(w_1,z_2\)$ est dans un voisinage de $\bar{D_1^{i_0}} \times \bar{D_2^{i_{n+1}}}$ on note $\xi^{i_0 \dots i_{n+1}} \(w_1,z_2\) = \(\xi_1^{i_0 \dots i_{n+1}} \(w_1,z_2\),\xi_2^{i_0 \dots i_{n+1}} \(w_1,z_2\)\) $ l'unique point fixe de l'application 
\[
y_1,y_2 \mapsto \( \phi_u^{i_0 \dots i_n} \(w_1, y_2 \) , \phi_s^{i_n i_{n+1}} \(y_1,z_2\) \)
\]
qui, par le lemme \ref{pf}, est bien défini et est analytique en $\(w_1,z_2\)$ puisque cette application est holomorphe et envoie $\bar{D_1^{i_n}} \times \bar{D_2^{i_n}}$ dans $\bul{D_1^{i_n}} \times \bul{D_2^{i_n}}$ et on pose
\begin{equation}\label{def1}
\phi_u^{i_0 \dots i_{n+1}} \(w_1,z_2\) = \phi_u^{i_n i_{n+1}} \(\xi_1^{i_0 \dots i_{n+1}} \(w_1,z_2\),z_2\)
\end{equation}
et
\begin{equation}\label{def2}
\phi_s^{i_0 \dots i_{n+1}} \(w_1,z_2\) = \phi_s^{i_0 \dots i_n} \( w_1,\xi_2^{i_0 \dots i_{n+1}} \(w_1,z_2\)\)
.\end{equation}

Notons qu'on a alors
\begin{align*}
f^{i_n i_{n+1}} \( \phi_u^{i_0 \dots i_n} \(w_1,x_2\),x_2\) & = f^{i_n i_{n+1}} \( \phi_u^{i_0 \dots i_n} \(w_1,x_2\),\phi_s^{i_n i_{n+1}} \(x_1,z_2\)\) \\
      & = f^{i_n i_{n+1}} \( x_1,\phi_s^{i_n i_{n+1}} \(x_1,z_2\)\) \\
      & = \( \phi_u^{i_n i_{n+1}} \(x_1,z_2\), z_2\) \\
      & = \( \phi_u^{i_0 \dots i_{n+1}} \(w_1,z_2\),z_2\) 
\end{align*}
et donc par récurrence il vient
\[
f^{i_0 \dots i_{n+1}} \( w_1, \phi_s^{i_0 \dots i_{n+1}} \(w_1,z_2\)\) = \( \phi_u^{i_0 \dots i_{n+1}} \(w_1,z_2\),z_2\)
.\]

Si $i_1, \dots, i_n$ sont des éléments de $I$ tels que $a\(i_0,i_1\) = \dots = a\(i_{n-1},i_n\) = a\(i_n,i_0\)= 1$ alors par le lemme \ref{pf}, l'application $ w_1,w_2 \mapsto \(\phi_u^{i_0 \dots i_n i_0} \(w_1,w_2\) , \phi_s^{i_0 \dots i_n i_0} \(w_1,w_2\)\)$ a un unique point fixe que l'on note $x^{i_0 \dots i_n i_0} = \(x_1^{i_0 \dots i_n i_0},x_2^{i_0 \dots i_n i_0}\)$.  Alors $x^{i_0 \dots i_n i_0}$ est l'unique point fixe de $f^{i_0 \dots i_n i_0}$ dont l'orbite reste dans les rectangles qu'on s'est donnés (par la formule \eqref{param}). Notons par ailleurs que ces points fixes sont hyperboliques (voir \cite{R1}).

Sans restriction de généralité, on suppose que pour tout $i \in I$ et tout $j \in \set{1,2}$ le disque $D_j^i$ est centré en $0$ et on note $r_j^i$ son rayon. On note $\hat{\C}= \C \cup \set{\infty}$ la sphère de Riemann. Pour tout $i \in I$, on note $U_i$ l'ouvert $\(\hat{C} \setminus \bar{D_1^i}\) \times \bul{D_i^2}$ , $V_i$ l'ouvert $\bul{D_1^i} \times \( \hat{\C} \setminus \bar{D_2^i} \)  $ et $\B_i = \A{U_i}$ l'espace des fonctions holomorphes sur $U_i$ qui se prolonge par continuité à la frontière, muni de la norme du sup. On note $\B = \prod_{i \in I} \B_i$. Pour tout $i \in I$ on note $\pi_i$ la projection canonique de $\B$ sur $\B_i$ et $b_i$ l'injection de $\B_i$ dans $\B$.

On se donne dans la suite une application holomorphe $h$ définie sur un voisinage de l'union disjointe des $\bar{D_1^i} \times \bar{D_2^i}$ pour $i \in I$. On se donne également pour tous $i,j \in I$ tels que $a\(i,j\) = 1$ un signe $s_{ij} \in \set{+1,-1}$. Pour tous $i_0, \dots, i_n \in I$ tels que $a\(i_0,i_1\) = \dots = a\(i_{n-1},i_n\) = 1$ on pose alors 
\[
s_{i_0 \dots i_n} = \prod_{j=0}^{n-1} s_{i_j i_{j+1}}
\]
et
\[
h^{i_0 \dots i_n} \(z,w\) = \prod_{k=1}^n h \(f^{i_0 \dots i_k} \(w_1, \phi_s^{i_0 \dots i_n} \(w_1,z_2\)\)\)
\]
pour $w = \(w_1,w_2\)$ dans un voisinage de $\bar{V_{i_0}}$ et $z = \(z_1,z_2\)$ dans un voisinage de $\bar{U_{i_n}}$.

On pose alors
\[
G^{i_0 \dots i_n} \(z,w\) = s_{i_0 \dots i_n} \frac{h^{i_0 \dots i_n }\(z,w\)}{z_1 - \phi_u^{i_0 \dots i_n}\(w_1,z_2\)} \frac{\partial_2 \phi_s^{i_0 \dots i_n} \(w_1,z_2\)}{w_2 - \phi_s^{i_0 \dots i_n }\(w_1,z_2\)}
\]
pour $z = \(z_1,z_2\)$ dans un voisinage de $\bar{U_{i_n}}$ et $w = \(w_1,w_2\)$ dans un voisinage de $\bar{V_{i_0}}$.

Si $i_0, \dots, i_{n+1} \in I$ sont tels que $a\(i_0,i_1\) = \dots = a\(i_n,i_{n+1}\) = 1$ , $z = \(z_1,z_2\)$ est dans un voisinage de $\bar{U_{i_{n+1}}}$ et $w = \(w_1,w_2\)$ est dans un voisinage de $\bar{V_{i_0}}$ alors on a
\begin{equation}\label{rec}
G^{i_0 \dots i_{n+1}}\(z,w\) = \frac{1}{2i \pi} \s{\partial D_1^{i_n}}{\( \frac{1}{2i \pi} \s{\partial D_2^{i_n}}{G^{i_n i_{n+1}} \(z, x\) G^{i_0 \dots i_n}\(x,w\)}{x_2}\) }{x_1}  
.\end{equation}
La preuve de cette formule est en annexe \ref{a2}.

Si $i_0, \dots, i_n \in I$ sont tels que $a\(i_0,i_1\) = \dots = a\(i_{n-1},i_n\) = 1$, on définit un opérateur borné $\L^{i_0 \dots i_n}$ de $\B_{i_0}$ dans $\B_{i_n}$ par 
\begin{equation}\label{def3}
\L^{i_0 \dots i_n} \phi \(z\) = \frac{1}{2i \pi} \s{\partial D_1^i}{ \( \frac{1}{2i \pi} \s{\partial D_2^i}{G^{i_0 \dots i_n}\(z,w\)\phi\(w\)}{w_2}\)}{w_1} 
,\end{equation}
pour $\phi \in \B_{i_0}$ et $z \in U_{i_n}$.

La formule de récurrence \eqref{rec} et le théorème de Fubini assurent que si $a\(i_0,i_1\) = \dots = a\(i_n,i_{n+1}\) = 1$ alors 
\[
\L^{i_n i_{n+1}} \circ \L^{i_0 \dots i_n} = \L^{i_0 \dots i_{n+1}}
.\]
Ainsi si on pose 
\[
\L = \sum_{i,j \in I, a\(i,j\) = 1} b_j \circ \L^{ij} \circ \pi_i
\]
alors pour tout entier naturel $n$ non nul on a 
\begin{equation}\label{puiss}
\L^n = \sum_{ a\(i_0,i_1\) = \dots = a\(i_{n-1},i_n\) = 1 } b_{i_n} \circ \L^{i_0 \dots i_n} \circ \pi_{i_0}
.\end{equation}

Rugh prouve alors dans \cite{R1} que $\L$ est $\frac{2}{3}$-sommable et que pour tout entier naturel $n$ non-nul on a
\begin{equation}\label{tr}
\tr \(\L^n\) = -\sum_{ a\(i_0,i_1\) = \dots = a\(i_{n-1},i_0\) = 1 } \frac{s_{i_0 \dots i_{n-1} i_0} \prod_{k=0}^{n-1} h\(f^{i_0 \dots i_k} \(x^{i_0 \dots i_{n-1} i_0}\)\)}{\det \(Df^{i_0 \dots i_{n-1} i_0}\(x^{i_0 \dots i_{n-1} i_0}\) - I \)}
.\end{equation}
La preuve de ce fait est en annexe \ref{a3}.

\subsection{Perturbation analytique d'applications analytiques réelles hyperboliques}\label{22}

On justifie dans cette partie que la construction de \S \ref{21} reste valable sous de petites perturbations.

\begin{lm}\label{ouv}
Soient $D_1,D_2,D_1',D_2'$ des disques dans $\C$ et $U$ un voisinage de $\bar{D_1} \times \bar{D_2}$ dans $\C^2$. Soit $V$ un voisinage ouvert de $0$ dans $\C^d$, où $d$ est un entier naturel. Pour tout $t$ dans $V$ on se donne une application holomorphe $f_t$ sur $U$ à valeurs dans $\C^2$ de telle manière que l'application $\(t,z\) \mapsto f_t\(z\)$ soit holomorphe sur $V \times U$. On suppose que $f_0$ est analytique réelle hyperbolique de $D_1 \times D_2$ sur $D_1' \times D_2'$ alors il existe un voisinage $W$ de $0$ dans $V$ tel que pour tout $t \in W$ l'application $f_t$ est analytique réelle hyperbolique de $D_1 \times D_2$ sur $D_1' \times D_2'$.
\end{lm}

\begin{proof}
Il est aisé de voir que la première condition dans la définition d'une application analytique réelle hyperbolique est encore vérifiée pour $t$ suffisamment proche de $0$. On se donne $U_1$ et $U_2$ comme dans la seconde condition. On note $r'$ le rayon de $D_2'$ et $x'$ son centre. Soit $K$ un compact de $U_1$ contenant $D_1$. On choisit $r'' > r'$ de telle manière que le disque fermé de $D_2''$ centre $x'$ et de rayon $r''$ soit inclus dans $U_2$. Alors $\phi_{0,s} \( K \times \bar{D_2''} \)$ est un compact inclus dans $D_2$ et $f_{0,2} \(K \times \( \bul{D_2} \setminus \phi_{0,s} \(K \times \bar{D_2''}\) \) \) \subset \C \setminus D_2''$. Par conséquent, $f_{0,2} \( K \times \partial D_2 \) \subset \C \setminus \bul{D_2''}$. Donc pour $t$ suffisamment proche de $0$, $f_{t,2} \( K \times \partial D_2 \)$ ne rencontre pas le disque fermé $\bar{D_2'''}$ de centre $x'$ et de rayon $r''' \in \left]r,r'' \right[$. Alors pour $w_1 \in K$ et $z_2 \in \bar{D_2'''}$, le nombre de solutions $w_2$ dans $\bul{D_2}$ de l'équation $f_{t,2} \(w_1,w_2\) = z_2$ est
\[
\frac{1}{2i \pi} \s{\partial D_2}{\frac{\partial_2 f_{t,2} \(w_1,w_2\)}{f_{t,2}\(w_1,w_2\) - z_2}}{w_2}
\]
qui est continue en $w_1,z_2$ et $t$ et vaut donc $1$ lorsque $t$ est suffisamment proche de $0$.
\end{proof}

Dans la suite on se donne :
\begin{itemize}
\item un ensemble non-vide $I$;
\item une matrice $a : I \times I \to \set{0,1}$;
\item un réel $ \epsilon$ strictement positif que l'on s'autorise à réduire;
\item pour tout $i \in I$ deux disques $D_1^i$ et $D_2^i$ dans $\C$ et un voisinage ouvert $W_i$ de $\bar{D_1^i} \times \bar{D_2^i}$ dans $\C^2$;
\item pour tout $i,j \in I$ tels que $a\(i,j\) = 1$ une application holomorphe $\(t,z\) \in \D{0}{\epsilon} \times W_i \mapsto f_t\(z\)$. 
\end{itemize}
On suppose qu'en fixant $t=0$, les éléments ci-dessus forment un système d'applications analytiques réelles hyperboliques. Par le lemme \ref{ouv}, c'est donc le cas pour tout $t$ quitte à réduire $\epsilon$. On reprend les notations de \S \ref{21} en ajoutant un indice $t$ pour spécifier le système que l'on utilise. On remarquera que les applications ainsi définies sont analytiques en $t$ par le lemme \ref{pf}. En outre, on suppose que lorsque $t$ est réel et $i,j$ sont tels que $a\(i,j\)= 1$, on a $\(f^{ij}_t\)^{-1} \(f^{ij}_t\(W_i\) \cap \R^2\) = W_i \cap \R^2$. Cette condition supplémentaire apparaîtra naturellement lorsqu'on appliquera nos résultats dans la sous-partie suivante.

Si $i,j \in I$ sont tels que $a\(i,j\) = 1$, l'hypothèse ci-dessus implique que lorsque $t$ est réel $\phi_{t,s}^{ij}$ envoie $\(D_1^i \times D_2^j\) \cap \R^2$ dans $\R^2$ et donc $\partial_2 \phi_{t,s}^{ij}$ est à valeurs réelles sur $D_1^j \times D_2^j \cap \R^2$ et ne s'annule pas (car une application holomorphe injective est un difféomorphisme sur son image). On note $s_{ij}$ son signe (qui ne dépend pas de $t$). Avec les notations de \S \ref{21}, si $i_0, \dots, i_n \in I$ sont tels que $a\(i_0,i_1\) = \dots = a\(i_{n-1},i_n\) = 1$ alors $s_{i_0 \dots i_n}$ est le signe de $\partial_2 \phi_{t,s}^{i_0 \dots i_n}$ sur $\(D_1^{i_0} \times D_2^{i_n}\) \cap \R^2$ lorsque $t$ est réel par la formule \eqref{signe} et le lemme \ref{pf}. Si de plus $i_0 = i_n$, alors $x_t^{i_0 \dots i_n} \in \R^2$ (puisqu'il est obtenu par itération d'applications qui préservent $\R^2$) et on a 
\[
\partial_2 f_{t,2}^{i_0 \dots i_n} \(x_t^{i_0 \dots i_n}\) = \( \partial_2 \phi_{t,s}^{i_0 \dots i_n} \(x_t^{i_0 \dots i_n}\)\)^{-1}
\]
et donc le signe de $\partial_2 f_{t,2}^{i_0 \dots i_n} \(x_t^{i_0 \dots i_n}\)$ est $s_{i_0 \dots i_n}$. Le point fixe $x_t^{i_0 \dots i_n}$ étant hyperbolique, les valeurs propres de $d\(f_t^{i_0 \dots i_n}\)\(x_t^{i_0 \dots i_n}\)$ sont réelles et $\tr{D\(f_t^{i_0 \dots i_n}\)\(x_t^{i_0 \dots i_n}\)}$ est du signe de sa valeur de module plus grand que $1$. Or
\[
\tr{D\(f_t^{i_0 \dots i_n}\)\(x_t^{i_0 \dots i_n}\)} = \partial_1 f_{t,1}^{i_0 \dots i_n} \(x_t^{i_0 \dots i_n}\) + \partial_2 f_{t,2}^{i_0 \dots i_n} \(x_t^{i_0 \dots i_n}\)
\]
et le signe de $\tr{D\(f_t^{i_0 \dots i_n}\)\(x_t^{i_0 \dots i_n}\)}$ est donc $s_{i_0 \dots i_n}$. Le signe de $\det \(D\(f_t^{i_0 \dots i_n}\)\(x_t^{i_0 \dots i_n}\) - I \)$ est donc $- s_{i_0 \dots i_n}$.

On se donne une application holomorphe $ \(v,z\) \mapsto h_v \(z\)$ définie sur $\mathcal{U} \times \coprod_{i \in I} W_i$, où $\mathcal{U}$ est un ouvert de $\C^D$, $D$ entier naturel. La partie précédente fournit un Banach $\B$ et une famille d'opérateurs $\( \L_{v,t} \)_{v \in \mathcal{U}, t \in \D{0}{\epsilon}}$. Le point précédent assure que lorsque $t$ est réel pour tout entier naturel $n$ non-nul on a
\begin{equation}\label{trasse}
\tr{\L_{v,t}^n} = \sum_{ a\(i_0,i_1\) = \dots = a\(i_{n-1},i_0\) = 1 } \frac{\prod_{k=0}^{n-1} h_v\(f_t^{i_0 \dots i_k} \(x_t^{i_0 \dots i_{n-1} i_0}\)\)}{\left| \det \(Df_t^{i_0 \dots i_{n-1} i_0}\(x_t^{i_0 \dots i_{n-1} i_0}\) - I \) \right|}
.\end{equation}

Il existe une application holomorphe $\(v,t\) \mapsto L_{v,t} \in \mathcal{B}' \oh \mathcal{B}$, telle que $L_{v,t} $ est un noyau $\frac{2}{3}$-sommable de $\L_{v,t}$. En effet, si $i,j \in I$ sont tels que $a\(i,j\) = 1$ alors la formule \eqref{def3} permet de définir un opérateur $M_{v,t}^{,ij}$ à valeurs dans $\A{X_j}$ où $X_j$ est un voisinage de $\bar{U_j}$ dans $\hat{\C}^2$ qui factorise $M_{v,t}^{ij}$. $\L_{v,t}^{ij}$ dépend de manière analytique de $v$ et de $t$ (pour les mêmes raisons que dans le cas des applications dilatantes du cercle,la continuité est aisée à établir et on en déduit l'analyticité en énonçant un résultat similaire au lemme \ref{cont2}). Or l'injection de $\A{X_j}$ dans $\B_j$ est $\frac{2}{3}$-sommable.

On en déduit que l'application définie sur $\mathcal{U} \times \C \times \D{0}{\epsilon}$
\[
\(z,v,t\) \mapsto \det \( I - z \L_{v,t} \)
\]
est holomorphe. Et la formule \eqref{trasse} nous en donne une expression lorsque $t$ est réel.

\subsection{Applications aux difféomorphismes d'Anosov du tore}\label{23}

On applique dans cette partie les résultats de \S \ref{21} et \S \ref{22} au cas d'un difféomorphisme d'Anosov du tore, suivant \cite{R2}. Le résultat essentiel de cette partie est le théorème \ref{gros}.

Fixons quelques notations. On identifie le fibré tangent de $\T^2$ avec $\T^2 \times \R^2$ et son complexifié avec $\T^2 \times \C^2$. On note $| . |$ la norme hilbertienne usuelle sur $\C^2$. On note $\pi$ la projection de $\C^2$ sur $\C^2 / \Z^2$ et $\T^2 = \R^2 / \Z^2 \simeq \pi \(\R^2\)$. On se donne $\epsilon > 0$, que l'on s'autorise à réduire autant que nécessaire, et pour tout $t \in \left] - \epsilon, \epsilon \right[$ un difféomorphisme d'Anosov analytique et transitif $f_t$ de $\T^2$. On suppose en outre que l'application $\(t,x\) \mapsto f_t\(x\)$ est analytique sur $\left]-\epsilon,\epsilon \right[ \times \T^2$ et on note $f = f_0$. On note $\(t,x\) \in  \mapsto F_t\(x\)$ un relevé défini sur $\left]-\epsilon,\epsilon \right[ \times \R^2$ de l'application $\(t,x\) \mapsto f_t\(x\)$ définie sur $\left]-\epsilon,\epsilon \right[ \times \T^2$ et on pose $F = F_0$. Par stabilité structurelle de $f$, quitte à réduire $\epsilon$, on peut supposer que les $f_t$ pour $t$ réel sont tous topologiquement conjugués.

Dire que $f$ est un difféomorphisme d'Anosov de $\T^2$, c'est dire qu'il existe des constantes $C$ et $\lambda < 1$ et pour tout $x \in \T^2$ deux droites supplémentaires $E_x^u$ et $E_x^s$ dans $T_x \T^2 \simeq \R^2$ telles que :
\begin{itemize}
\item pour tout $x \in \T^2$, on a $T_x f \(E_x^u\) = E_{f\(x\)}^u$ et $T_x f \(E_x^s\) = E_{f\(x\)}^s$;
\item pour tout $x \in \T^2$ et tout entier naturel $n$, on a $\| \left. T_x f^n \right|_{E_x^s} \| \leqslant C \lambda^n$ et $\| \left. T_x f^{-n} \right|_{E_x^u} \| \leqslant C \lambda^n$.
.\end{itemize}

L'application $\(t,x\) \mapsto f_t\(x\)$ étant analytique sur $\left]-\epsilon,\epsilon \right[ \times \T^2$, elle s'étend, quitte à réduire $\epsilon$ en une application holomorphe, que l'on note de la même manière ainsi que son relevé, sur $\D{0}{\epsilon} \times U$ où $U$ est un voisinage de $\T^2$ dans $\C^2 / \Z^2$.

On se donne une application analytique $g : \T^2 \to \R$ qui s'étend en une application holomorphe de $U$ dans $\C$, quitte à réduire $U$. Nous allons démontrer :
\begin{thm}\label{gros}
Sous ces hypothèses et quitte à réduire $\epsilon$, pour tout $v \in \R$ et tout $t \in \left] - \epsilon, \epsilon \right[$ la série entière 
\[
\sum_{n > 0} \frac{1}{n} \( \sum_{x \in \fix{f_t^n}} \frac{\exp \(v \sum_{j=0}^{n-1} g\(f_t^j \(x\)\)\)}{\left| \det \( D \( f_t^n \) \( x \) - I \) \right|}  \) z^n
\]
a un rayon de convergence strictement positif et l'application
\[
z \mapsto \exp\( - \sum_{n > 0} \frac{1}{n} \( \sum_{x \in \fix{f_t^n}} \frac{\exp \(v \sum_{j=0}^{n-1} g\(f_t^j \(x\)\)\)}{\left| \det \( D \( f_t^n \) \( x \) - I \) \right|}  \) z^n \)
\]
s'étend en une application entière $z \mapsto d\(z,v,t\)$ qui a un zéro simple en $\exp\(-P_t\(\phi_t^u + vg\) \)$, où $P_t$ désigne la pression topologique pour la dynamique de $f_t$ et $\phi_t^u : x \mapsto - \log\( \left| \left. \det Df_t\(x\) \right|_{E_{t,x}^u}\right|\)$ (où $E_{t,x}^u$ désigne la direction instable de $f_t$ au point $x$).

De plus, l'application $d$ est analytique en ses trois arguments.
\end{thm}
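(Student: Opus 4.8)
The plan is to realise the geometric dynamical determinant $d$ as a finite combination of Fredholm determinants of the transfer operators built in \S \ref{21} and \S \ref{22}, by means of a Markov partition of $f$. First I would fix an analytic Markov partition $\set{R_i}_{i \in I}$ adapted to $f = f_0$: since $f$ is analytic Anosov on $\T^2$, its local stable and unstable manifolds are real-analytic, so in coordinates adapted to these manifolds each rectangle $R_i$ becomes a product $D_1^i \times D_2^i$ and each admissible transition $R_i \to R_j$ (recorded by the Markov matrix $a\(i,j\)$) is realised by an analytic real hyperbolic map $f_0^{ij}$ in the sense of \S \ref{21}. By structural stability this combinatorial data is unchanged for small real $t$, and Lemma \ref{ouv} lets the whole system persist holomorphically for $t \in \D{0}{\epsilon}$; the compatibility condition $\(f_t^{ij}\)^{-1}\(f_t^{ij}\(W_i\) \cap \R^2\) = W_i \cap \R^2$ of \S \ref{22} holds because the $f_t$ preserve $\T^2 \simeq \pi\(\R^2\)$. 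Feeding the weight $h_v = \exp\(vg\)$ into the construction then yields the Banach space $\B$, the operators $\L_{v,t}$, the holomorphy of $\(z,v,t\) \mapsto \det\(I - z\L_{v,t}\)$ on $\C \times \mathcal{U} \times \D{0}{\epsilon}$, and the trace formula \eqref{trasse}, in which the factor $\frac{1}{\left|\det\(Df_t^n - I\)\right|}$ and the Birkhoff weight $\exp\(v \sum_{j} g\(f_t^j x\)\)$ appear exactly as in the statement.

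The subtlety is that $\tr\(\L_{v,t}^n\)$ sums over \emph{admissible cyclic words} $i_0 \dots i_{n-1} i_0$, that is, over $f_t^n$-periodic orbits staying in the chosen rectangles, whereas the theorem sums over all of $\fix{f_t^n}$; the two differ only through the overcounting of periodic points lying on the boundary of the partition. To reconcile them I would invoke Manning's combinatorial analysis \cite{Manning}: the boundary of the partition splits into stable and unstable parts, each carrying a subshift of finite type, and an inclusion--exclusion expresses the geometric weighted sum as a finite alternating combination of the word-sums attached to the full system and to these boundary subsystems. Passing to generating functions identifies $d\(z,v,t\)$ with a finite product $\prod_\alpha \det\(I - z\L_{v,t}^{(\alpha)}\)^{\pm 1}$ of Fredholm determinants of transfer operators of the same type, each analytic in $\(z,v,t\)$ by \S \ref{22}. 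This already gives the positive radius of convergence of the defining series (it is the logarithm of a meromorphic function not vanishing at $z=0$) and the joint analyticity of $d$ in its three arguments.

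For the leading zero I would argue as in the circle case (Propositions \ref{trouspec} and \ref{final}). Since $\frac{1}{\left|\det\(Df_t^n - I\)\right|} \sim \exp\(\sum_{k=0}^{n-1} \phi_t^u\(f_t^k x\)\)$ as $n \to \infty$, the weighted periodic sums grow like $\exp\(n P_t\(\phi_t^u + vg\)\)$, so the main operator $\L_{v,t}$ has spectral radius $\exp\(P_t\(\phi_t^u + vg\)\)$; transitivity of $f_t$ makes the underlying subshift mixing, and Ruelle--Perron--Frobenius gives a simple leading eigenvalue with a spectral gap, hence a simple zero of $\det\(I - z\L_{v,t}\)$ at $z = \exp\(-P_t\(\phi_t^u + vg\)\)$. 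The boundary subsystems carry strictly smaller topological pressure, so their determinants neither vanish nor blow up at this point, and the simple zero therefore survives as the smallest zero of the product $d$.

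The hard part will be the \emph{entireness} of $d$. The Manning product is a priori only meromorphic in $z$, and one must show that the zeros of its denominator factors are exactly cancelled by zeros of its numerator factors, so that $d$ extends to a genuine entire function. This cancellation reflects the fact that the boundary subshifts are internal to the full symbolic dynamics; following Rugh \cite{R2} one establishes it by exhibiting the determinant in a form where no denominator appears. Since the construction of \S \ref{23} does not by itself make this visible, I would carry out the argument separately in \S \ref{24}.
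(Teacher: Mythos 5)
Your construction of $d$ follows the same route as the paper: a Markov partition for $f_0$, Manning's inclusion--exclusion \cite{Manning} to correct the overcounting of periodic points on rectangle boundaries, the operators of \S \ref{22} with weight $\exp\(vg\)$, the resulting expression of $d$ as a finite alternating product of Fredholm determinants, and entirety deferred to a separate argument following \cite{R2}. The genuine gap is your treatment of the leading zero. You invoke a Ruelle--Perron--Frobenius spectral gap for $\L_{v,t}$ itself; but this operator acts on $\B$, whose elements are functions holomorphic on the sets $U_i = \(\hat{\C} \setminus \bar{D_1^i}\) \times \bul{D_2^i}$ — they are not functions on the torus, the operator is defined by contour integrals, and it carries no positivity, so RPF simply does not apply to it. Note that even in the expanding-circle case the paper needed a dedicated argument (proposition \ref{trouspec}), which worked only because restriction to $S^1$ intertwines $\L_{u,t}$ with a positive transfer operator on $\mathcal{C}^1\(S^1\)$; no analogous intertwining exists for the two-variable hyperbolic spaces. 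Moreover, for the simple zero to survive in the alternating product you need the boundary subsystems to carry strictly smaller pressure, which you assert without proof. The paper avoids both difficulties: it multiplies $d\(z,v,0\)$ by Ruelle's zeta function $\zeta$ attached to the weight $\exp\(\phi^u + vg\)$ (\cite{Ruelle2}, 7.23--7.24), whose only singularity in a disk of radius $r > \exp\(-P\(\phi^u+vg\)\)$ is a simple pole at $\exp\(-P\(\phi^u+vg\)\)$ and which does not vanish there, and then shows that $d\(\cdot,v,0\)\zeta$ is holomorphic and non-vanishing on a slightly larger disk, using the elementary estimate that the difference between $1/\left|\det\(\left. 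Df^n\(x\)\right|_{E^u_x}\)\right|$ and $1/\left|\det\(Df^n\(x\)-I\)\right|$ is exponentially small relative to the first term, so that the series defining $\log\(d\zeta\)$ has radius of convergence strictly larger than $\exp\(-P\(\phi^u+vg\)\)$. The simple zero of $d$ is then read off as the simple pole of $\zeta$, with no spectral theory and no pressure comparison for boundary systems.

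Two further points in your plan need repair. First, the joint analyticity of $d$ in $\(z,v,t\)$ does not follow from the product formula alone: the product involves negative exponents, so at a parameter value where a denominator determinant vanishes, analyticity of the quotient in $\(v,t\)$ is not automatic. The paper fixes this by exploiting entirety in $z$: writing $d = d_1/d_2$ with $d_1,d_2$ jointly holomorphic, choosing a circle on which $d_2\(\cdot,v_0,t_0\)$ does not vanish, and representing $d$ inside that circle by the Cauchy integral of $d_1/\(\(w-z\)d_2\)$, which is manifestly holomorphic in all three variables. Second, for real $t \neq 0$ the partition you fixed for $f_0$ is no longer a Markov partition for $f_t$, so Manning's formula cannot be applied to $f_t$ verbatim; the identity between the alternating sum of traces and the sum over $\fix{f_t^n}$ (the paper's lemme \ref{Man}, formule \eqref{salt}) is proved by showing that both sides are real-analytic in $t$ and coincide near $t = 0$, where the implicit function theorem and the topological conjugacy of the $f_t$ identify the continued fixed points $x_t^{e_0 \dots e_{n-1} e_0}$ with the elements of $\fix{f_t^n}$ counted with Manning's multiplicities $N^n_m$. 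Your appeal to structural stability gestures at this but does not supply the continuation argument.
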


\begin{rmq}\label{adaptation}
On en déduit une formule pour la réponse linéaire en fonction des points périodiques de $f$ de la même manière que dans le cas du cercle. Plus précisément, la proposition \ref{expr} et son corollaire \ref{derivee} restent vrais sans modification des preuves lorsqu'on remplace $\mu_t$ par la mesure physique de $f_t$ (c'est-à-dire la mesure d'équilibre de $\phi_t^u$, voir pour cela le corollaire 4.13 de \cite{Bow2}) et l'application $d$ définie dans la proposition \ref{ppal} par celle définie dans le théorème \ref{gros} (on a $P_t\(\phi_t^u\) = 0$ par le théorème 4.11 de \cite{Bow2})
\end{rmq}

\begin{lm}\label{233}
Si $\epsilon$ et $U$ sont choisis suffisamment petit, alors pour $t$ réel et $x \in U$ on a $f_t\(x\) \in \T^2$ si et seulement si $x \in \T^2$.
\end{lm}

\begin{proof}
Pour $t$ réel, la différentielle de $f_t$ en un point du tore est \og à coefficients réels \fg{} et inversible. Un raisonnement similaire à celui de la démonstration du lemme \ref{tech} permet donc de conclure.
\end{proof}

On supposera dans la suite que les hypothèses du lemme \ref{233} sont vérifiées. Pour tout $x \in \T^2$ on note $p_x^u$ la projections sur le $\C$-sous-espace vectoriel de $\C^2$ engendré par $E_x^u$ (noté $F_x^u$) selon celui engendré par $E_x^s$ (noté $F_x^s$) et $p_x^s = I -p_x^u$. On choisit $ \theta \in \left]\lambda,1 \right[$ et on pose pour tout $x \in \T^2$ et tout $v \in \C^2$:
\[
\|v\|_x^s = \sqrt{\sum_{k \geqslant 0} \theta^{-2k} \left| T_x f^k \circ p_x^s \(v\) \right|^2}, \quad
\|v\|_x^u = \sqrt{\sum_{k \geqslant 0} \theta^{-2k} \left| T_x f^{-k} \circ p_x^u \(v\) \right|^2}
\]
et
\[
\|v\|_x = \max \( \|v\|_x^s,\|v\|_x^u\)
.\] 
Cette norme dont l'introduction est due à Mather fait de la différentielle de $f$ une contraction (resp. une dilatation) lorsqu'on la restreint aux directions stables (resp. instables).

Si $x \in \R^2$ et $\delta > 0$ on pose $R_x \( \delta \) = \set{y \in \C^2 : \|y-x\|_x \leqslant \delta }$ où on a noté $\|.\|_x = \|.\|_{\pi\(x\)}$ (on utilisera la même convention pour les projections et les sous-espaces de $\C^2$). Remarquons que si $\delta < \frac{1}{4}$ alors $\pi$ induit un difféomorphisme d'un voisinage de $R_x \( \delta \)$ sur son image. De plus, si on pose 
\[
D_1^x \(\delta\) = \set{ y \in F_x^s : \|y-x\|_x^s \leqslant \delta}
\textrm{ et }
D_2^x \(\delta\) = \set{ y \in F_x^u : \|y-x\|_x^u \leqslant \delta}
,\]
le rectangle $R_x \( \delta\)$ s'écrit naturellement comme un produit de deux disques :
\[
R_x \( \delta \) = D_1^x \( \delta \) + D_2^x \( \delta \)
.\]

Rugh prouve alors dans \cite{R2} le résultat suivant.

\begin{lm}
Il existe $\gamma > 0$ et $\delta >0$ arbitrairement petits tels que si $x,y \in \T^2$ vérifient $|x-y| < \gamma $ et $|F\(x\) - F\(y\)| < \gamma$ alors $F$ induit une application analytique réelle hyperbolique de $R_x\(\delta\) = D_1^x \(\delta\) + D_2^x \(\delta\)$ sur $R_{F\(y\)} \(\delta\) =  D_1^{F\(y\)} \(\delta\) + D_2^{F\(y\)} \(\delta\)$.
\end{lm}

\begin{rmq}
Si on voulait se ramener exactement au cadre des parties précédentes, il faudrait considérer l'application $\tilde{F}$ vérifiant
\[
\tilde{F} \circ \(p_x^s,p_x^u\) = \(p_{F\(y\)}^s,p_{F\(y\)}^u\) \circ F 
\]
et dire que cette application est analytique réelle hyperbolique. C'est ce qu'on entend dans l'énoncé du lemme.
\end{rmq}

On se donne de tels $\delta$ et $\gamma$ avec $\delta < \frac{1}{2}$. Dans la suite, on se donne une partition de Markov $\(R_i\)_{i \in I}$ de $\T^2$ pour $f$ en des rectangles de diamètres plus petits que $\min\( \frac{\gamma}{ \Delta +1}, \frac{\delta}{M} \)$ et assez petits pour que les résultats de \cite{Manning} s'appliquent (voir \cite{Bow2} ou \cite{Bow1} pour la définition et la construction des partitions de Markov).

Afin de démontrer le théorème \ref{gros}, nous rappelons maintenant la construction de \cite{Manning} qui permet de rectifier l'erreur dans le décompte des points périodiques d'un difféomorphisme Anosov lorsqu'on a recours à la dynamique symbolique. On dit que des rectangles $R_{i_1}, \dots, R_{i_r}$ sont reliés si $ \cap_{j=1}^r R_{i_j} \neq \emptyset$. On note $q$ le plus grand entier tel qu'il existe $q$ rectangles distincts reliés. Si $m = \(m_1, \dots , m_k \)$ est un uplet d'entiers non-nuls tels que $|m| = \sum_{j=1}^k m_j \leqslant q$, on note $\mathcal{C}_m$ l'ensemble des $k$-uplets $\(e_1, \dots, e_k\)$, où pour $j = 1, \dots ,k$, $e_j$ est un ensemble de $m_j$ rectangles, et les éléments de $e_1, \dots, e_k$ sont $|m|$ rectangles reliés distincts. 

On définit une matrice de transition $a_m$ sur $\mathcal{C}_m$ en posant si $e = \(e_1, \dots, e_k\)$ et $f = \(f_1, \dots ,f_k\)$  sont des éléments de $\mathcal{C}_m$, $a_m\(e,f\)=1$ si et seulement si pour $j=1, \dots ,k$ il existe une bijection de $e_j$ sur $f_j$ telle que pour tout $R \in e_j$, $f \( \bul{R} \) $ rencontre $\bul{\phi\(R\)}$ (notons qu'une telle bijection est alors unique, voir \cite{Manning}). On note $\(C_m,\sigma_m\)$ le sous-décalage de type fini ainsi défini. Notons que pour $k=1$ et $m=\(1\)$, on retrouve le sous-décalage de type fini habituellement associé à $f$.

Il existe alors une application continue $\pi_m : C_m \to \T^2$ qui semi-conjugue $\sigma_m$ et $f$ et telle que si $e=\(e^n\)_{n \in \Z}$ est un élément de $C_m$ et $e^0 = \(e_1, \dots, e_k\)$ alors $ \pi_m \(e\) \in \cap_{j=1}^k \(\cap e_j\)$ (voir \cite{Manning}).

Si $n$ est un entier naturel non-nul et $x \in \fix{f^n}$, on note $N_m^n\(x\) $ le nombre d'antécédents de $x$ par $\pi_m$ qui sont dans $\fix{\sigma_m^n}$.

Pour $k=1,\dots ,q$, on note $G_k$ l'ensemble des $k$-uplets $m = \(m_1, \dots , m_k \)$ d'entiers non-nuls tels que $|m| = \sum_{j=1}^k m_j \leqslant q$. Manning prouve alors dans \cite{Manning} que pour tout entier naturel non-nul $n$ et tout $x \in \fix{f^n}$ :
\begin{equation}\label{comptage}
\sum_{k=1}^q \sum_{m \in G_k} \(-1\)^{k+1}  N_m^n \(x\) = 1
.\end{equation}

Fixons temporairement $k \in \set{1,\dots,q}$ et $m \in G_k$. Pour tout $e = \(e_1, \dots ,e_k\) \in \mathcal{C}_m$, on choisit $x_e \in \cap_{j=1}^k \( \cap e_j\)$ (un tel point existe bien par définition de $\mathcal{C}_m$) et un relevé $\tilde{x_e}$ de $x_e$. Remarquons qu'alors $\pi\(R_{\tilde{x_e}} \(\delta\)\)$ recouvre $x_e \in \cap_{j=1}^k \( \cap e_j\)$. 

Si $e= \(e_1,\dots,e_k\)$ et $f = \(f_1,\dots,f_k\)$ sont des éléments de $\mathcal{C}_m$ tels que $a_m\(e,f\) = 1$ alors il existe $R \in e_1$ et $R' \in f_1$ tels que $f\( \bul{R} \)$ rencontre $\bul{R'}$. On note $y$ un point dans l'intersection. Alors $y$ a un relevé $\tilde{y}$ tel que $|\tilde{y}-\tilde{x_e}| \leqslant \textrm{diam } R \leqslant \frac{\gamma}{\Delta + 1}$ et il existe $\tau_{ef} \in \Z^2$ tel que de la même manière $|\tilde{x_f} - \(F\(\tilde{y}\) - \tau_{ef}\)| \leqslant \frac{\gamma}{\Delta + 1}$. Par conséquent
\begin{align*}
\left| \tilde{x_e} - F^{-1} \(\tilde{x_f} + \tau_{ef}\) \right| & \leqslant \left|\tilde{x_e}-\tilde{y} \right| + \left|F^{-1} \(F\(\tilde{y}\) \) - F^{-1}\(\tilde{x_f} + \tau_{ef} \)\right| \\
    & \leqslant \frac{\gamma}{\Delta +1} + \Delta \left| \tilde{y} - \(\tilde{x_f} + \tau_{ef} \)\right| \leqslant \frac{\gamma}{\Delta +1} + \Delta \frac{\gamma}{\Delta + 1} \leqslant \gamma. 
\end{align*}
Et de même 
\[
\left| F \(\tilde{x_e}\) - \(\tilde{x_f} + \tau_{ef}\) \right| \leqslant \gamma
.\]
Et donc $F$ est hyperbolique de $R_{\tilde{x_e}} \(\delta\)$ sur $R_{\tilde{x_f} + \tau_{ef}} \(\delta\)$. Autrement dit $f$ induit une application analytique réelle hyperbolique de $\pi\(R_{\tilde{x_e}} \(\delta\)\)$ sur $\pi\(R_{\tilde{x_f}} \(\delta\)\)$ (qui sont biholomorphes à des produits de disques). 

On a donc construit un système d'applications analytiques réelle hyperboliques. On applique alors la construction de \S \ref{22} (ce qui implique potentiellement de réduire $\epsilon$, mais seulement un nombre fini de fois) avec $h_v \(x\) = \exp\(v g\(x\)\)$ pour $v \in \C$ et $ x \in U$. On note $\L_{m,v,t}$ l'opérateur obtenu et $\B_m$ le Banach sur lequel il agit. Nous allons alors montrer :
\begin{lm}\label{Man}
Pour tout $t \in \left]- \epsilon, \epsilon\right[$, tout $v \in \C$ et tout entier naturel non-nul $n$, on a 
\begin{equation}\label{salt}
\sum_{k=1}^q \sum_{m \in G_k} \(-1\)^{k+1} \tr \L_{m,v,t}^n = \sum_{x \in \fix{f_t^n}} \frac{\exp \(v \sum_{j=0}^{n-1} g\(f_t^j \(x\)\)\)}{\left| \det \( D \( f_t^n \) \( x \) - I \) \right|} 
.\end{equation}
\end{lm}
\begin{proof}
Remarquons que les deux membres de cette équation sont analytiques (réels) en $t$ : celui de gauche est la restriction d'une application holomorphe par la partie précédente, celui de droite par le théorème des fonctions implicites et le fait que les $f_t$ sont tous conjugués. Il suffit donc de montrer que l'égalité a lieu dans un voisinage de $0$ (qui peut éventuellement dépendre de $n$). 

Fixons donc $n$ et montrons cette égalité dans un voisinage de $0$. Par le théorème des fonctions implicites, si $x \in \fix{f_n}$ ($x$ est hyperbolique pour $f^n$), il existe une application continue $t \mapsto c_x\(t\)$ définie sur un voisinage de $0$ telle que $c_x\(0\) = x$ et pour tout $t$ où cette application est définie $c_x\(t\) \in \fix{f_t^n}$. De plus, si l'ensemble de définition de $c_x$ est choisi suffisamment petit, une autre application avec ces propriétés coïncide avec $c_x$ sur la composante connexe de $0$ dans l'intersection de son domaine de définition avec celui de $c_x$. On se donne $\epsilon' >0$ tel que pour tout $x \in \fix{f_n}$ l'application $c_x$ est définie sur $\left]- \epsilon', \epsilon'\right[$ avec la propriété d'unicité ci-dessus et pour tout $t \in \left] - \epsilon',\epsilon' \right[$, les $c_x\(t\)$ pour $x \in \fix{f^n}$ sont distincts (ceci est possible car par le théorème d'inversion locale les points $n$ périodiques de $f$ sont isolés et donc en nombre fini). Mais alors les $c_x\(t\)$ pour $x \in \fix{f^n}$ sont exactement les éléments de $\fix{f_t^n}$ car $f_t$ est topologiquement conjugué à $f$ et a donc autant de points $n$-périodiques. 

Ainsi si $k \in \set{1, \dots, q}$, $m \in G_k$ et $e_0, \dots ,e_{n-1}$ sont des éléments de $\mathcal{C}_m$ tels que $a_m \(e_0,e_1\) = \dots = a_m\(e_{n-2},e_{n-1}\) = a_m\(e_{n-1},e_0\) = 1$ alors par la propriété d'unicité de $c_{x^{e_0 \dots e_{n-1} e_0}}$ (avec les notations des parties précédentes) on a pour tout $t \in \left] - \epsilon', \epsilon' \right[$ :
\[
x_t^{e_0 \dots e_{n-1} e_0} = c_{x^{e_0 \dots e_{n-1} e_0}} \(t\)
.\]
Or si on note de manière légèrement abusive$\(e_0 \dots e_{n-1} \)^{\omega}$ le point périodique de $\sigma_m$ associé au cycle $e_0, \dots, e_{n-1}, e_0$, le point $\pi_m \( \(e_0 \dots e_{n-1} \)^{\omega}\)$ vérifie la propriété qui caractérise $x^{e_0 \dots e_{n-1} e_0}$ donnée dans la première partie (c'est un point périodique de $f$ dont les itérés sont dans les bons rectangles) et donc 
\[
\pi_m \( \(e_0 \dots e_{n-1} \)^{\omega}\) = x^{e_0 \dots e_{n-1} e_0}
\]
puis pour tout $t \in \left] - \epsilon', \epsilon' \right[$ :
\[
x_t^{e_0 \dots e_{n-1} e_0} = c_{\pi_m \( \(e_0 \dots e_{n-1} \)^{\omega}\)} \(t\)
.\]

La formule \eqref{trasse} pour la trace de l'opérateur $\L_{m,v,t}^n$ donne alors pour $t \in \left] - \epsilon', \epsilon' \right[$ :
\begin{align*}
\tr{\L_{m,v,t}^n} & = \sum_{a_m\(e_0,e_1\) = \dots = a_m\(e_{n-1},e_0\) = 1 } \frac{\prod_{k=0}^{n-1} h_v\(f_t^n \(x_t^{e_0 \dots e_{n-1} e_0}\)\)}{\left| \det \(Df_t^n \(x_t^{e_0 \dots e_{n-1} e_0}\) - I \) \right|} \\
     & = \sum_{a_m\(e_0,e_1\) = \dots = a_m\(e_{n-1},e_0\) = 1 } \frac{\exp \( v \sum_{j=0}^{n-1} g \(f_t^n \( c_{\pi_m \( \(e_0 \dots e_{n-1} \)^{\omega}\)} \(t\) \) \)\)}{\left| \det \(Df_t^n \(c_{\pi_m \( \(e_0 \dots e_{n-1} \)^{\omega}\)} \(t\)\) - I \) \right|} \\
     & = \sum_{y \in \fix{\sigma_m^n}} \frac{\exp \( v \sum_{j=0}^{n-1} g \(f_t^n \( c_{\pi_m \( y\)} \(t\) \) \)\)}{\left| \det \(Df_t^n \(c_{\pi_m \( y\)} \(t\)\) - I \) \right|} \\
     & = \sum_{x \in \fix{f^n}} N_m^n\(x\) \frac{\exp \( v \sum_{j=0}^{n-1} g \(f_t^n \( c_x \(t\) \) \)\)}{\left| \det \(Df_t^n \(c_x \(t\)\) - I \) \right|}.
\end{align*}

On obtient alors pour $t \in \left] - \epsilon', \epsilon' \right[$ :
\begin{align*}
\sum_{k=1}^q \sum_{m \in G_k} \(-1\)^{k+1} \tr \L_{m,v,t}^n & = \sum_{x \in \fix{f_t^n}}\( \(\sum_{k=1}^q \sum_{m \in G_k} \(-1\)^{k+1}  N_m^n \(x\) \) \frac{\exp \(v \sum_{j=0}^{n-1} g\(f_t^j \(x\)\)\)}{\left| \det \( D \( f_t^n \) \( x \) - I \) \right|} \) \\
 & = \sum_{x \in \fix{f_t^n}} \frac{\exp \(v \sum_{j=0}^{n-1} g\(f_t^j \(x\)\)\)}{\left| \det \( D \( f_t^n \) \( x \) - I \) \right|} 
\end{align*}
par la formule \eqref{comptage}, ce qui achève la démonstration de la formule \eqref{salt}.
\end{proof}

\begin{proof}[Démonstration du théorème \ref{gros}]
Du lemme, \ref{Man}, on déduit que pour tout $t \in \left]- \epsilon, \epsilon \right[$ et $v \in \R$ la série entière
\[
\sum_{n > 0} \frac{1}{n} \( \sum_{x \in \fix{f_t^n}} \frac{\exp \(v \sum_{j=0}^{n-1} g\(f_t^j \(x\)\)\)}{\left| \det \( D \( f_t^n \) \( x \) - I \) \right|}  \) z^n
\]
a un rayon de convergence strictement positif et que l'application
\[
z \mapsto \exp\( - \sum_{n > 0} \frac{1}{n} \( \sum_{x \in \fix{f_t^n}} \frac{\exp \(v \sum_{j=0}^{n-1} g\(f_t^j \(x\)\)\)}{\left| \det \( D \( f_t^n \) \( x \) - I \) \right|}  \) z^n \)
\]
s'étend à $\C$ en une application méromorphe 
\[
z \mapsto d\(z,v,t\) = \prod_{k=1}^q \prod_{m \in G_k} \det\(I - z \L_{m,v,t}\)^{\(-1\)^{k+1}} 
.\]
Dans \cite{R2}, Rugh prouve que cette application est en fait entière (une esquisse de preuve de ce fait est l'objet de \S \ref{24}).

Montrons que $d$ est analytique en ses trois arguments (c'est en fait la restriction d'une application holomorphe). La fonction $d$ s'écrit comme le quotient de la restriction de deux applications $d_1$ et $d_2$ qui d'après la partie précédente sont holomorphes sur $\C \times \C \times \D{0}{\epsilon}$. Soit $\(z_0,v_0,t_0\) \in \C \times \R \times \left] - \epsilon, \epsilon \right[$. Il existe alors $R > 0$ tel que $d_2\(.,v_0,t_0\)$ ne s'annule pas sur le cercle $S$ de centre $z_0$ et de rayon $R$ ($d_2\(.,v_0,t_0\)$ n'est pas identiquement nulle car elle vaut $1$ en $0$). Alors pour $v$ et $t$ dans un voisinage (respectivement dans $\C$ et dans $ \D{0}{\epsilon}$) de $v_0$ et $t_0$, $d_2\(.,v,t\)$ reste hors d'un voisinage de $0$ sur ce cercle et on peut donc écrire pour de tels $v$ et $t$ et $z$ dans le disque ouvert de centre $z_0$ et de rayon $R$ (on utilise ici l'entièreté de $d\(.,v.t\)$):
\[
d\(z,v,t\) = \frac{1}{2i \pi} \s{ \partial D}{ \frac{d_1\(w,v,t\)}{\(w-z\)d_2\(w,v,t\)}}{w}
.\]
Et il est classique qu'une telle expression est holomorphe en $\(z,v,t\)$. L'application $d$ est donc analytique en ses trois arguments au voisinage de $\(z_0,v_0,t_0\)$ puis analytique en ses trois arguments.

Enfin, on va montrer que si $t \in \left] - \epsilon, \epsilon \right[$ et $v \in \R$ alors il existe $R > \exp\(-P_t\(\phi_t^u + vg\)\)$ (où $\phi_t^u : x \mapsto - \log\( \left| \det \left. Df_t\(x\) \right|_{E_{t,x}^u} \right|\)$) tel que $d\(.,v,t\)$ a un unique zéro dans le cercle de centre $0$ et de rayon $R$, que celui-ci est simple et qu'il s'agit de $\exp\(-P_t\(\phi_t^u + vg\)\)$. Sans restriction de généralité, on peut pour cela supposer $t = 0$ et on pose alors $\phi^u = \phi_0^u$ et $P=P_0$.

D'après Ruelle (paragraphes 7.23 et 7.24 de \cite{Ruelle2}) la série entière 
\[
\sum_{n>0} \frac{1}{n} \(\sum_{x \in \fix{f^n}} \exp \( \sum_{j=0}^{k-1} \( \phi^u \(f^j\(x\)\) + v g \(f^j \(x\) \) \)\)\) z^n
\]
a pour rayon de convergence $\exp\(-P\(\phi^u + vg\)\)$ et l'application
\[
z\mapsto \exp \( \sum_{n>0} \frac{1}{n} \(\sum_{x \in \fix{f^n}} \exp \( \sum_{j=0}^{k-1} \( \phi^u \(f^j\(x\)\) + v g \(f^j \(x\) \) \)\)\) z^n \)
\]
s'étend en une application méromorphe $\zeta$ sur un disque de centre $0$ et de rayon $r > \exp\(-P\(\phi^u + vg\)\)$ qui ne s'annule pas et qui a un unique pôle en $ \exp\(-P\(\phi^u + vg\)\)$ qui est simple. Mais alors pour $z$ de module suffisamment petit on a 
\begin{align*}
& d\(z,v,0\) \zeta \(z\) =  \\  & \exp\( \sum_{n>0} \frac{1}{n} \(\sum_{x \in \fix{f^n}} \exp\( v \sum_{j=0}^{k-1} g \(f^j\(x\)\)\) \( \frac{1}{\left|\det\( \left. Df^n \(x\)\right|_{E_x^u}\) \right|} - \frac{1}{\left| \det\(Df^n\(x\) -I\)\right|}\)    \)z^n \) 
.\end{align*}
Si $n$ est un entier naturel et $x \in \fix{f^n}$ on note $a_u\(n,x\)$ et $a_s\(n,x\)$ les valeurs propres de $Df^n\(x\)$ de valeurs absolues respectivement plus petite et plus grande que $1$. L'hypothèse d'hyperbolicité faite sur $f$ assure alors que $|a_u\(n,x\)| \geqslant \lambda^n$ et $|a_s\(n,x\)| \leqslant \lambda^{-n}$.  On a alors
\begin{align*}
\left| \frac{1}{\left|\det\( \left. Df^n \(x\)\right|_{E_x^u}\) \right|} - \frac{1}{\left| \det\(Df^n\(x\) -I\)\right|} \right| & = \left| \frac{1}{\left|a_u\(n,x\)\right|} - \frac{1}{\left|a_u\(n,x\) - 1\right| \left|a_s\(n,x\) - 1\right|}\right| \\
   & \leqslant \frac{1}{|a_u\(n,x\)|} \frac{\left| 1 - \(1- \frac{1}{a_u\(n,x\)}\)\(1-a_s\(n,x\)\)\right|}{\(1- \frac{1}{a_u\(n,x\)}\)\(1-a_s\(n,x\)\)} \\
   & \leqslant \frac{1}{\left|\det\( \left. Df^n \(x\)\right|_{E_x^u}\) \right|} 3 \( \frac{\lambda}{\lambda -1}\)^2 \lambda^{-n}
.\end{align*}
Et donc pour tout entier naturel non-nul $n$, on a 
\begin{align*}
\left| \frac{1}{n} \(\sum_{x \in \fix{f^n}} \exp\( v \sum_{j=0}^{k-1} g \(f^j\(x\)\)\) \( \frac{1}{\left|\det\( \left. Df^n \(x\)\right|_{E_x^u}\) \right|} - \frac{1}{\left| \det\(Df^n\(x\) -I\)\right|}\) \)\right| \\ \leqslant 3 \( \frac{\lambda}{\lambda-1}\)^2 \lambda^{-n} \frac{1}{n} \(\sum_{x \in \fix{f^n}} \exp \( \sum_{j=0}^{k-1} \( \phi^u \(f^j\(x\)\) + v g \(f^j \(x\) \) \)\)\)
.\end{align*}
Notons que ceci fonctionne parce que le poids $\exp\(v g\)$ est positif. On en déduit que la série
\[
\sum_{n>0} \frac{1}{n} \(\sum_{x \in \fix{f^n}} \exp\( v \sum_{j=0}^{k-1} g \(f^j\(x\)\)\) \( \frac{1}{\left|\det\( \left. Df^n \(x\)\right|_{E_x^u}\) \right|} - \frac{1}{\left| \det\(Df^n\(x\) -I\)\right|}\)    \)z^n 
\]
a un rayon de convergence supérieur ou égal à $\lambda \exp\(-P\(\phi^u + vg\) \)$. Par principe du prolongement analytique, l'application $z \mapsto d\(z,v,0\) \zeta\(z\)$ est holomorphe et ne s'annule pas sur le disque de centre $0$ et de rayon $R = \min\(r,\lambda \exp\(-P\(\phi^u + vg\) \)\)$. On en déduit que $\exp\(-P\(\phi^u + vg\) \)$ est l'unique $0$ de $z \mapsto d\(z,v,0\)$ dans le disque de centre $0$ et de rayon $R$ et qu'il est simple. Ce qui achève la preuve du théorème \ref{gros}, à l'entièreté de la fonction $d$ près qui est l'objet de \S \ref{24}.
\end{proof}

\subsection{Esquisse de preuve de l'entièreté du déterminant.}\label{24}

S'inspirant des idées de \cite{R2}, on donne une esquisse de preuve de l'entièreté des déterminants définis dans la partie précédente. On en profite pour donner la preuve d'un résultat légèrement plus fort dans la remarque \ref{unpeuplus} que nous utiliserons pour démontrer la proposition \ref{vitconv2} de \S \ref{25}.

On se donne un difféomorphisme Anosov $f$ de $\T^2$ et une application analytique $g$ de $\T^2$ dans $\R$. Par la même méthode que dans la partie précédente, on montrerait que l'application
\[
z \mapsto \exp \(-\sum_{n > 0} \frac{1}{n} \sum_{x \in \fix{f^n}} \frac{\prod_{k=0}^{n-1} g\(f^k\(x\)\)}{\left| \det \(D f^n \(x\) - I \) \right|}\) 
\]
s'étend en une application méromorphe sur $\C$. On va montrer que cette extension est en fait entière. 

Pour cela on se donne deux orbites périodiques $\mathcal{O}_1$ et $\mathcal{O}_2$ telles que pour chacune il existe une partition de Markov de diamètre suffisamment petit pour appliquer les constructions de la partie précédente telle que les seuls points périodiques inclus dans les bords des rectangles sont ceux de l'orbite en question (pour la construction voir \cite{R2} qui reprend la construction de partitions de Markov par Bowen, voir \cite{Bow2}). On se donne une application analytique $c$ de $\T^2$ dans $\R_+^*$ telle que :
\begin{itemize}
\item $\(s,x\) \in \R \times \R^2 \mapsto c\(x\)^s$ est la restriction d'une application holomorphe sur $\C \times \( \C^2 / \Z^2 \)$;
\item $\prod_{x \in \mathcal{O}_1} c\(x\) = 1$;
\item $\prod_{x \in \mathcal{O}_2} c\(x\) < 1$. 
\end{itemize}
La première hypothèse nous permet d'appliquer la construction de la partie précédente avec  comme poids
\[
h_s\(x\) = g\(x\) c\(x\)^s
\] 
pour $x \in \T^2$ et $s \in \R$ sans causer de problèmes de dépendance logique. Pour construire une telle application, on peut commencer par trouver une application $\tilde{c}$ holomorphe sur $\C^2 / \Z^2$ prenant des valeurs réelles sur $\T^2$, strictement négative sur $\mathcal{O}_2$ et strictement positive sur $\mathcal{O}_1$ (par exemple en utilisant le théorème de Stone-Weierstrass). On obtient alors l'application désirée en renormalisant $e^{\tilde{c}}$.

On note $\(z,s\) \mapsto d\(z,s\)$ l'application analytique ainsi obtenue. Chacune des deux partitions de Markov est à l'origine d'une écriture de $d$ :
\[
d\(z,s\) = \frac{\phi_i\(z,s\)}{\psi_i\(z,s\)}
\]
pour $i = 1,2$ où pour tout $s \in \R$ les applications $\phi_i\(.,s\)$ et $\psi_i\(.,s\)$. De plus, $\psi_i$ est un produit fini d'applications qui au voisinage de l'origine sont de la forme
\[
\(z,s\) \mapsto \exp \( - \sum_{n > 0} \frac{G_i^{kn}C_i^{skn}}{J_{i,n}} z^n \) 
\]
où 
\[
G_i = \prod_{x \in \mathcal{O}_i} g\(x\)
, \quad
C_i = \prod_{x  \in \mathcal{O}_i} c\(x\)
\textrm{ et }
J_{i,n} = \left| \det\(D f^n\(x\) - I\) \right|
\]
pour $x$ un élément quelconque de $\mathcal{O}_i$.

Ainsi $C_1=1$, et donc pour tout $s \in \R$ on a $\psi_1\(.,v\) = \psi_1\(.,0\)$. Par contre, $0 < C_2 < 1$ et donc les séries entières dont les sommes apparaissent dans les exponentielles ci-dessus tendent vers $+ \infty$ lorsque $s$ tend vers $+ \infty$. Par conséquent pour tout $z \in \C$ ,$\psi_2\(z,s\) \neq 0$ pour $s$ suffisamment grand.

Mais alors si $\psi_1\(.,0\)$ a une racine d'ordre $n$ en $z \in \C$ alors pour tout $s$ suffisamment grand $\psi_2\(z,s\) \neq 0$ et donc $\phi_1\(.,v\)$ a une racine d'ordre au moins $n$ en $z$ (puisque $\psi_1\(.,v\) = \psi_1\(.,0\)$ a une racine d'ordre $n$ en $z$). et donc $\phi_1^{(j)}\(z,s\) = 0$ pour $j = 0, \dots, n-1$. Comme ceci a lieu pour tout $s$ suffisamment grand et que $\phi_2$ est analytique en $s$ on a donc également $\phi_1^{(j)}\(z,0\) = 0$ pour $j = 0, \dots , n-1$, $\phi_1\(.,0\)$ a une racine d'ordre au moins $n$ en $z$ et donc $d\(.,0\) $ est holomorphe en $z$.

\begin{rmq}\label{unpeuplus}
Notons que si l'on fait la construction de la partie précédente en remplaçant le poids $\exp\(vg\(x\)\)$ par $\exp\(vg\(x\)+ s a\(x\)\)$  où $a$ est une fonction holomorphe sur $\C^2 / \Z^2$ telle que pour un certain $t \in \D{0}{\epsilon}$ suffisamment petit on ait 
\[
\sum_{x \in \mathcal{O}_{1,t}} a\(x\) = 0
\textrm{ et }
\Re \( \sum_{x \in \mathcal{O}_{2,t}} a\(x\) \) < 0
\]
où $\mathcal{O}_{1,t}$ et $\mathcal{O}_{2,t}$ sont les orbites périodiques de $f_t$ obtenues en suivant $\mathcal{O}_1$ et $\mathcal{O}_2$ par le théorème des fonctions implicites (ce que l'on peut toujours faire si $t$ est suffisamment petit), alors on peut appliquer l'argument ci-dessus pour montrer que $z \mapsto d\(z,v,t\)$ est entière (on ne l'avait montré a priori que pour $t$ réel).
\end{rmq}

\subsection{Vitesse de convergence}\label{25}

\'Ecrivons la fonction $d$ du théorème \ref{gros} comme
\begin{equation}\label{develo}
d\(z,v,t\) = 1 + \sum_{n \leqslant 1} a_n\(v,t\)z^n
.\end{equation}
On va donner une estimation de la vitesse de décroissance des $a_n\(v,t\)$ et de leurs dérivées pour $v$ et $t$ au voisinage de $0$. Par le théorème \ref{gros} et la remarque \ref{unpeuplus} pour $v$ et $t$ suffisamment proche de $0$ (dans $\C$) l'application $z \mapsto d\(z,v,t\)$ est entière. De plus, elle s'écrit 
\[
d\(z,v,t\) = \prod_{k=1}^q \prod_{m \in G_k} \det\(I - z \L_{m,v,t}\)^{\(-1\)^{k+1}} 
\]
avec les notations de \S \ref{23}. La méthode sera la même que dans le cas des applications dilatantes du cercle, à quelques détails techniques près liés à cette écriture de $d$ comme un quotient.

\begin{lm}\label{nucl2}
Soient $k \in \set{1,\dots,q}$ et $m \in G_k$. Il existe des constantes $C >0$ et $0 < \eta <1$ tel que pour tous $v$ et $t$ suffisamment proches de $0$ il existe une suite $\(\phi_n\)_{n \geqslant 0}$ d'éléments de $\B_m$ et une suite $\(l_n\)_{n \geqslant 0}$ d'éléments du dual de $\B_m$ tels que
\[
\forall n \in \N : \|\phi_n\| \|l_n\| \leqslant C \eta^{\sqrt{n}}
\]
et
\[
\L_{m,v,t} = \sum_{n \in \N} l_n \otimes \phi_n
.\]
\end{lm}

\begin{proof}
Il suffit de se rappeler de la preuve de la dépendance analytique de $\L_{m,v,t}$ en $v$ et $t$ (dans \S \ref{22}) : les injections que l'on considère vérifient de telles estimations (par développement en série de Laurent en une coordonnée et en série entière en l'autre).
\end{proof}

\begin{cor}
Soient $k \in \set{1,\dots,q}$ et $m \in G_k$. Il existe des constantes $C_1 >0$ et $C_2 > 0$ telles que pour tous $v$ et $t$ suffisamment proches de $0$ et tout $z \in \C  $ on ait
\[
\log \left| \det\(I- z \L_{m,v,t}\) \right| \leqslant C_1 + C_2 \left| \log |z| \right|^3
.\] 
\end{cor}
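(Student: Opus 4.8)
The plan is to combine the nuclear decomposition of Lemma~\ref{nucl2} with the coefficient estimate already carried out in the proof of Proposition~\ref{conv1}, and then to optimise the resulting power series by an elementary saddle-point computation. Write $\det\(I - z\L_{m,v,t}\) = \sum_{n \geq 0} a_n z^n$ (the $a_n$ depend on $v,t$). By Lemma~\ref{nucl2} there are $C>0$ and $\eta\in\(0,1\)$, \emph{uniform} for $v,t$ near $0$, together with $l_n \in \B_m'$ and $\phi_n\in\B_m$ such that $\L_{m,v,t} = \sum_n l_n\otimes\phi_n$ and $\lambda_n := \|l_n\|\|\phi_n\|\leq C\eta^{\sqrt n}$. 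Setting $e_n = \phi_n/\|\phi_n\|$ and $f_n = l_n/\|l_n\|$ (discarding null terms) puts us exactly in the situation of Proposition~\ref{conv1}, so the Grothendieck coefficient formula together with Hadamard's inequality gives
\[
|a_n| \leq n^{\frac n2}\sum_{0\leq i_1 < \dots < i_n} \lambda_{i_1}\cdots\lambda_{i_n} \leq C^n n^{\frac n2}\, S_n, \qquad S_n := \sum_{0 \leq i_1 < \dots < i_n}\eta^{\sqrt{i_1}+\dots+\sqrt{i_n}}.
\]

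First I would estimate $S_n$. Since the $i_j$ are distinct integers, relaxing the constraint $i_1<\dots<i_n$ to the independent constraints $i_j \geq j-1$ enlarges the index set, whence $S_n \leq \prod_{j=1}^n T_{j-1}$ with $T_r := \sum_{i\geq r}\eta^{\sqrt i}$. A direct comparison with an integral gives $T_r \leq C_\eta\(1+\sqrt r\)\eta^{\sqrt r}$, so that
\[
S_n \leq C_\eta^n \(\prod_{j=1}^n\(1+\sqrt{j-1}\)\) \eta^{\sum_{k=0}^{n-1}\sqrt k}.
\]
Because $\sum_{k=0}^{n-1}\sqrt k \geq \frac23\(n-1\)^{3/2}$, while the factors $n^{n/2}$, $C^n$, $C_\eta^n$ and $\prod_j\(1+\sqrt{j-1}\) \leq \(1+\sqrt n\)^n$ are all of the form $\exp\(O\(n\log n\)\)$, these are negligible against $\eta^{\frac23\(n-1\)^{3/2}}$; hence for every $b\in\(\eta^{2/3},1\)$ there is $C_0$ with
\[
|a_n| \leq C_0\, b^{\,n^{3/2}}, \qquad n \in \N,
\]
\emph{uniformly} in $v,t$ near $0$. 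In particular the series converges for all $z$, consistently with the entirety established in \S\ref{24}.

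It then remains to bound $\log\Phi\(x\)$, where $\Phi\(x\) := \sum_n C_0\, b^{\,n^{3/2}} x^n \geq |\det\(I - z\L_{m,v,t}\)|$ for $x=|z|$. Writing $\gamma = -\log b > 0$ and splitting $\gamma n^{3/2} = \tfrac\gamma2 n^{3/2} + \tfrac\gamma2 n^{3/2}$, I would use
\[
\Phi\(x\) \leq C_0 \(\max_{s\geq 0}\exp\(-\tfrac\gamma2 s^{3/2}+s\log x\)\)\sum_{n\geq 0}\exp\(-\tfrac\gamma2 n^{3/2}\).
\]
The remaining sum is a finite constant, and the elementary maximisation $\max_{s\geq 0}\(s\log x - \tfrac\gamma2 s^{3/2}\) = \frac{16}{27\gamma^2}\(\log x\)_+^3$ (attained at $s_* \sim \(\log x\)^2$) produces exactly the cube. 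Taking logarithms yields $\log\Phi\(x\)\leq C_1 + C_2 \(\log x\)_+^3$, and since $\(\log|z|\)_+^3\leq |\log|z||^3$ this is the announced bound, uniform in $v,t$.

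The main obstacle is the first estimate: one must check that the super-exponential Hadamard factor $n^{n/2}$ and the combinatorial factor hidden in $S_n$ do \emph{not} degrade the decay rate, i.e.\ that the $\eta^{\sqrt n}$-decay of the nuclear weights genuinely forces the coefficients to decay like $b^{\,n^{3/2}}$ (growth of the exponent of order $\tfrac32$). This is precisely what fixes the exponent $3 = d+1$ in the final inequality; the saddle-point step is then purely computational.
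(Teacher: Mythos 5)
Your proof is correct and follows the same skeleton as the paper's: write $\det(I-z\L_{m,v,t})=\sum_n a_n z^n$, combine Lemma \ref{nucl2} with the Grothendieck--Hadamard coefficient bound from the proof of Proposition \ref{conv1} to get $|a_n|\leqslant C^n n^{n/2}\sum_{0\leqslant i_1<\dots<i_n}\eta^{\sqrt{i_1}+\dots+\sqrt{i_n}}$, deduce a decay of the form $\exp(-Dn^{3/2})$, and convert this decay into the bound $C_1+C_2\left|\log|z|\right|^3$ on $\log|\det|$. Two local differences are worth noting. First, where the paper simply cites Fried \cite{fried} for the combinatorial estimate $\sum_{0\leqslant i_1<\dots<i_n}\eta^{\sqrt{i_1}+\dots+\sqrt{i_n}}\leqslant D_1\exp(-D_2 n^{3/2})$, you prove it directly by relaxing the ordering constraint to the independent constraints $i_j\geqslant j-1$ (legitimate since all terms are positive) and comparing with integrals; this makes the argument self-contained, and your bookkeeping correctly shows that the parasitic factors $C^n n^{n/2}(1+\sqrt n)^n=\exp(O(n\log n))$ do not spoil the $n^{3/2}$ decay. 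Second, for the passage from coefficient decay to the growth bound, the paper truncates the series at $n_0=\lceil(\log(2|z|)/D_4)^2\rceil$ and bounds head and tail separately, recovering the case $|z|\leqslant 1$ by compactness, whereas you split the exponent $\gamma n^{3/2}$ in two and optimize $s\log x-\tfrac{\gamma}{2}s^{3/2}$ over $s\geqslant 0$; the two computations are equivalent (the truncation index is essentially your saddle point $s_*\sim(\log x)^2$, and both produce the cube of the logarithm as the Legendre-type transform of $n^{3/2}$), but your version has the minor advantage of treating $|z|\leqslant 1$ uniformly, without a separate compactness step.
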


\begin{proof}
\'Ecrivons
\[
\det\(I-zL_{m,v,t}\) = \sum_{n=0}^{+ \infty} \alpha_n\(v,t\) z^n
\]
comme dans la preuve de \ref{conv1}, on déduit de \ref{nucl2} que pour tout entier $n$ on a, pour $v$ et $t$ suffisamment proches de $0$ :
\begin{align*}
\left|\alpha_n\(v,t\)\right| & \leqslant C^n \sum_{0 \leqslant i_1 < \dots < i_n} \eta^{\sum_{j=1}^n \sqrt{i_j}} n^{\frac{n}{2}} \leqslant C^n D_1 \exp\(-D_2 n^{\frac{3}{2}}\) n^{\frac{n}{2}}\\
                             & \leqslant D_3 \exp\(-D_4 n^{\frac{3}{2}}\)
\end{align*}
pour certaines constantes $D_3 > 0$ et $D_4 > 0$, où on a utilisé
\[
\sum_{0 \leqslant i_1 < i_2 < \dots <i_n} \eta^{\sum_{j=1}^n \sqrt{i_j}} \leqslant D_1 \exp\(-D_2 n^{\frac{3}{2}}\) 
\]
qui est démontré dans \cite{fried}.

Maintenant si $z \in \C$ avec $|z| \geqslant 1$ on pose 
\[
n_0 = \left\lceil \(\frac{\log\(2 |z| \)}{D_4}\)^2 \right\rceil
\]
 de telle manière que si $n \geqslant n_0$ on a $|z| \exp\(-D_4 \sqrt{n}\) \leqslant \frac{1}{2}$ et donc
\begin{align*}
\left| \det\(I- z \L_{m,v,t}\) \right| & \leqslant  \sum_{n=0}^{+ \infty} \left| \alpha_n \(v,t\) \right| |z|^n \\
      & \leqslant D_3 \sum_{n=0}^{n_0-1} \exp\(-D_4 n^{\frac{3}{2}}\) |z|^n + D_3 \sum_{n=n_0}^{+ \infty} \frac{1}{2^n} \\
      & \leqslant D_3 \( \sum_{n=0}^{+ \infty} \exp\(-D_4 n^{\frac{3}{2}}\) \)|z|^{n_0} + \frac{D_3}{2^{n_0-1}} \\
      & \leqslant D_3 \( \sum_{n=0}^{+ \infty} \exp\(-D_4 n^{\frac{3}{2}}\) \) \exp\(\log |z| \left\lceil \(\frac{\log\(2 |z| \)}{D_4}\)^2 \right\rceil \) + 2 D_3 \\
      & \leqslant D_5 \exp\( \frac{\(\log |z|\)^3}{D_4^2}\) + 2 D_3
\end{align*}
où $D_5 = D_3 \( \sum_{n=0}^{+ \infty} \exp\(-D_4 n^{\frac{3}{2}}\) \) \exp\( \(\frac{\log 2}{D_4}\)^2 + 1\)$. On en déduit aisément le résultat lorsque $|z| \geqslant 1$. On récupère le résultat lorsque $|z| \leqslant 1$ par compacité.
\end{proof}

\begin{lm}\label{254}
Il existe des constantes $A_1 >0$ et $A_2 > 0$ telles que pour tous $v$ et $t$ suffisamment proches de $0$ et tout $z \in \C  $ on ait
\[
\log \left| d\(z,v,t\) \right| \leqslant A_1 + A_2 \left| \log |z| \right|^3
.\] 
\end{lm}

\begin{proof}
Pour tout $r \leqslant 0$, on note $n_{v,t}\(r\)$ le nombre de zéros de $z \mapsto d\(z,v,t\)$ de module inférieur ou égal à $r$ (comptés avec multiplicité). De même, si $k \in \set{1,\dots,q}$ et $m \in G_k$, on note $n_{m,v,t}\(r\)$ le nombre de zéros de $z \mapsto d\(z,v,t\)$ de module inférieur ou égal à $r$. L'application $z \mapsto d\(z,v,t\)$ étant entière pour tout $r \geqslant 0$ on a
\[
n_{v,t}\(r\) = \sum_{k=1}^q \sum_{m \in G_k} \(-1\)^{k+1} n_{m,v,t}\(r\) \leqslant \sum_{0 \leqslant 2l +1 \leqslant q} \sum_{m \in G_{2l+1}} n_{m,v,t}\(r\) 
.\]

On déduit alors de la formule de Jensen qu'il existe des constantes $C_3 >0$ et $C_4 > 0$ telles que pour tout $r \geqslant 0$ on ait 
\[
\int_0^r \frac{n_{v,t}\(s\)}{s} \mathrm{d}s \leqslant \sum_{0 \leqslant 2l +1 \leqslant q} \sum_{m \in G_{2l+1}} \int_0^r \frac{n_{m,v,t}\(s\)}{s} \mathrm{d}s \leqslant C_3 + C_4 \left|\log r\right|^3
.\]

On a alors pour tout $r \geqslant 0$ :
\begin{align*}
n_{v,t}\(r\) & \leqslant 2 \int_r^{2r} \frac{n_{v,t}\(s\)}{s} \mathrm{d}s  \leqslant 2 \int_0^{2r} \frac{n_{v,t}\(s\)}{s} \mathrm{d}s \leqslant C_5 + C_6 \left| \log r \right|^3 
\end{align*}
pour certaines constantes $C_5 >0$ et $C_6 >0$. On a alors pour $r \geqslant 1$ :
\[
\int_r^{+ \infty} \frac{n_{v,t}\(s\)}{s^2} \mathrm{d}s \leqslant \frac{C_5}{r} + C_6 \int_r^{+ \infty}  \frac{\(\log s\)^3}{s^2} \mathrm{d}s
.\]
Et par le changement de variable \og $u= \log s$ \fg{} puis trois intégrations par parties on obtient 
\[
\int_r^{+ \infty}  \frac{\(\log s\)^3}{s^2} \mathrm{d}s = \int_{\log r}^{+ \infty} u^3 e^{-u} \mathrm{d} u = \frac{\(\log r\)^3}{r} + \frac{3\(\log r\)^2}{r} + \frac{6 \log r}{r} + \frac{6}{r}
\] 
et il existe donc des constantes $C_7 > 0$ et $C_8 >0$ telles que pour tout $r \geqslant 1$ on ait
\[
\int_0^r \frac{n_{v,t}\(s\)}{s} \mathrm{d}s + r \int_r^{+ \infty} \frac{n_{v,t}\(s\)}{s^2} \mathrm{d}s \leqslant C_7 + C_8 \(\log r\)^3
.\]
Or $z \mapsto d\(z,v,t\)$ est de genre $0$ (par le corollaire 4 page 18 de la deuxième partie de \cite{Groth}) et $d\(0,v,t\) = 1$ et donc, par le lemme 3.5.1 de \cite{Boas}, pour tout $z \in \C$ tel que $|z| \geqslant 1$ on a
\[
\log |d\(z,v,t\)| \leqslant C_7 + C_8 \(\log |z|\)^3
.\]
Et on récupère le résultat pour $|z| \leqslant 1$ par compacité.

\end{proof}

\begin{prop}\label{vitconv2}
Il existe des constantes $\alpha > 0$ et $\beta > 0$ telles que pour $v$ et $t$ suffisamment proche de $0$ et tout entier naturel $n$ on ait 
\[
\left| a_n\(v,t\) \right| \leqslant \alpha e^{- \beta n^\frac{3}{2}}
,\]
où les $a_n$ sont définis par \eqref{develo}. De plus, les  dérivées partielles des $a_n$ vérifient des estimations similaires  
\end{prop}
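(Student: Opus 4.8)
The plan is to read off the decay of the coefficients directly from the growth estimate on $d$ furnished by Lemma \ref{254}, by means of a Cauchy estimate optimized over the radius---the same strategy as for the circle in Proposition \ref{vitconv}, adapted to the fact that here $d$ is a quotient of Fredholm determinants, so Proposition \ref{conv1} does not apply to $d$ itself. Since $z \mapsto d\(z,v,t\)$ is entire for $v,t$ near $0$ in $\C$ (Theorem \ref{gros} and Remarque \ref{unpeuplus}) and $d\(0,v,t\) = 1$, the coefficients are given for every $r > 0$ by the Cauchy formula
\[
a_n\(v,t\) = \frac{1}{2i\pi} \s{\partial \D{0}{r}}{\frac{d\(z,v,t\)}{z^{n+1}}}{z},
\]
so that Lemma \ref{254} yields $\left| a_n\(v,t\) \right| \leqslant r^{-n} \exp\(A_1 + A_2 \left| \log r \right|^3\)$.

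The crux is then a one-variable optimization of this bound over $r > 1$. Setting $u = \log r > 0$, I would minimize the exponent $-nu + A_2 u^3$; it is stationary at $u = \sqrt{n/\(3 A_2\)}$, and substituting this value gives a minimum equal to $-\beta n^{3/2}$ with $\beta = \frac{2}{3\sqrt{3 A_2}} > 0$. Choosing $r = \exp\(\sqrt{n / \(3A_2\)}\)$ therefore produces $\left| a_n\(v,t\) \right| \leqslant \alpha e^{-\beta n^{3/2}}$ with $\alpha = e^{A_1}$ for $n \geqslant 1$, the case $n = 0$ being trivial since $a_0 \equiv 1$. The constants are uniform in $\(v,t\)$ precisely because those of Lemma \ref{254} are.

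For the partial derivatives I would use that $\(v,t\) \mapsto a_n\(v,t\)$ is holomorphic on a fixed complex polydisque about $0$ on which the bound above holds uniformly. Integrating over a slightly smaller polydisque of polyrayon $\rho$ and applying the Cauchy estimates for holomorphic functions of several variables gives, for each multi-indice,
\[
\left| \frac{\partial^{j+k} a_n}{\partial v^j \partial t^k}\(v,t\) \right| \leqslant \frac{j! \, k!}{\rho^{j+k}} \, \alpha \, e^{-\beta n^{3/2}},
\]
uniformly for $\(v,t\)$ near $0$, which is the asserted estimate with an order-dependent constant.

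The main difficulty has in fact already been surmounted in Lemma \ref{254}: extracting the $\left| \log |z| \right|^3$ growth of $\log |d|$ from the quotient structure is what required the genus-$0$ argument through the formule de Jensen and the zero-counting function. Once that bound is available, the present proposition reduces to a routine Cauchy estimate and an elementary optimization; the only point deserving care is to check that the constants $A_1, A_2$---and hence $\alpha, \beta$---may be taken uniform over a full complex neighbourhood of $\(0,0\)$, so that the derivative bounds follow by the same Cauchy-in-$\(v,t\)$ argument.
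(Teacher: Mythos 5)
Your proposal is correct and follows essentially the same route as the paper: Cauchy's inequalities on a circle of radius $r$ combined with the growth bound of Lemma \ref{254}, optimized at $r = \exp\(\sqrt{n/\(3A_2\)}\)$ to yield the $e^{-\beta n^{3/2}}$ decay, then Cauchy estimates in $\(v,t\)$ over a fixed complex polydisque for the partial derivatives. Your explicit computation of $\beta = \frac{2}{3\sqrt{3A_2}}$ and your remark that the uniformity of $A_1, A_2$ over a complex neighbourhood of $\(0,0\)$ (via Remarque \ref{unpeuplus}) is the point to check are both consistent with, and slightly more detailed than, the paper's argument.
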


\begin{proof}
Les inégalités de Cauchy et le lemme \ref{254} donnent pour tous $r \leqslant 1$ et $n \in \N$ : 
\[
\log \left| a_n\(v,t\) \right|  \leqslant A_1 + A_2 \( \log r \)^3 - n \log r
\]
et on obtient le résultat annoncé en prenant $r = \exp\(\sqrt{\frac{n}{3A_2}}\)$ (quitte à traiter séparément les premiers termes en prenant par exemple $r=1$).

Comme dans le cas des applications dilatantes du cercle, ce résultat reste vrai pour les dérivées des $a_n$ grâce à la formule de Cauchy.
\end{proof}

On a donc une estimation de la vitesse de convergence des séries apparaissant dans la formule donnée par le corollaire \ref{formula} (qui est toujours valable dans ce cadre par la remarque \ref{adaptation}).

\appendix

\section*{Annexe}

\section{Preuve du lemme \ref{pf}}\label{a1}

Soit $K'$ une partie compacte de $V$. La suite d'applications $\(x,z \mapsto \psi_x^n \(z\) \)_{n \in \N}$ étant uniformément bornée sur $K' \times \bar{U}$, on peut en extraire une sous-suite $\(x,z \mapsto \psi_x^{n_k} \(z\) \)_{k \in \N}$ convergeant uniformément sur ce même ensemble. Notons $x,z \mapsto \psi_x^* \(z\)$ la limite, qui est holomorphe sur l'intérieur de $K' \times \bar{U}$. 

Si $z = \(z_1, \dots, z_m \) \in \C^m$ on note $|z| = \sum_{i=1}^d |z_i|$.

Soit $x \in \bul{K'} $, en remarquant que pour tout entier $k$ non-nul on a 
\[
\sup_{D} |\psi_x^{n_k}| \leqslant \sup_{K} |\psi_x^{n_k -1}| \leqslant \sup_{K} |\psi_x^{n_{k -1}}| \leqslant \sup_{D} |\psi_x^{n_{k -1}}|
\]
d'où par passage à la limite
\[
\sup_{z \in D} |\psi_x^*| = \sup_{z \in K} |\psi_x^*|
\]
et donc $\psi_x^*$ est constante par le principe du maximum. On note $z_x$ son unique valeur. La suite de compacts non-vide $\(\psi_x^n\(\bar{D}\)\)_{n \in \N}$ est décroissante donc non-vide et si $z'$ en est un élément alors en notant pour tout $k$ entier naturel $z'_k$ un élément de $\bar{U}$ tel que $\psi_x^{n_k} \(z_k\) = z'$ on a 
\[
|z' - z_x| \leqslant |\psi_x^{n_k} \(z_k\) - \psi_x^*\(z_k\) | \leqslant \sup_D |\psi_x^{n_k} - \psi_x^*|
\] 
et en laissant tendre $k$ vers $+ \infty$ il vient $z_x = z'$ d'où
\[
\bigcap_{n \in \N} \psi_x^n\(\bar{D}\) = \set{z_x}
.\]
Cet ensemble étant stable par $\psi_x$ (puisque $\psi_x \(\bar{D}\) \subset \bar{D}$), il vient que $z_x$ est l'unique point fixe de $\psi_x$ dans $\bar{D}$ et donc l'unique point fixe de $\psi_x$.

$x \mapsto z_x$ sur $\bul{K'}$ est holomorphe comme composées d'applications holomorphes. $V$ étant dénombrable à l'infini on en déduit que $x \mapsto z_x$ est holomorphe sur $V$.

Enfin, la convergence uniforme implique que
\[
\(d\(\psi_x\)_{z_x}\)^{n_k} = d\( \psi_x^{n_k} \)_{z_x} \underset{ k \to + \infty}{\to} d\(\psi_x^*\)_{z_x} = 0
\]
et donc les valeurs propres de $d\(\psi_x\)_{z_x}$ sont de module strictement plus petit que $1$.

\section{Preuve de la formule \texorpdfstring{\eqref{rec}}{e}}\label{a2}
Si $i_0, \dots, i_{n+1} \in I$ sont tels que $a\(i_0,i_1\) = \dots = a\(i_n,i_{n+1}\) = 1$ , $z = \(z_1,z_2\)$ est dans un voisinage de $\bar{U_{i_{n+1}}}$ et $w = \(w_1,w_2\)$ est dans un voisinage de $\bar{V_{i_0}}$ alors 
\begin{align*}
& \frac{1}{2i \pi} \s{\partial D_1^{i_n}}{\( \frac{1}{2i \pi} \s{\partial D_2^{i_n}}{G^{i_n i_{n+1}} \(z, x\) G^{i_0 \dots i_n}\(x,w\)}{x_2}\) }{x_1}  \\
 & = \frac{s_{i_0 \dots i_{n+1}}}{2i \pi} \s{\partial D_1^{i_n}}{\( \frac{1}{2i \pi} \s{\partial D_2^{i_n}}{ \frac{h^{i_n i_{n+1}} \(z,x\) h^{i_0 \dots i_n}\(x,w\)}{x_1 - \phi_u^{i_0 \dots i_n} \(w_1,x_2\)} \frac{\partial_2 \phi_s^{i_0 \dots i_n} \(w_1,x_2\)}{w_2 - \phi_s^{i_0 \dots i_n} \(w_1,x_2\)} \right. \\ & \quad \left. \frac{1}{w_1 - \phi_u^{i_n i_{n+1}} \(x_1,z_2\)} \frac{\partial_2 \phi_s^{i_n i_{n+1}}\(x_1,z_2\)}{x_2 - \phi_s^{i_n i_{n+1}} \(x_1,z_2\)} }{x_2}\) }{x_1} \\ 
 & = \frac{s_{i_0 \dots i_{n+1}}}{2i \pi} \s{\partial D_1^{i_n}}{ \( \frac{h^{i_n i_{n+1}} \(z,\(x_1,\phi_u^{i_n i_{n+1}} \(x_1,z_2\)\)\) h^{i_0 \dots i_n}\(\(x_1,\phi_u^{i_n i_{n+1}} \(x_1,z_2\)\),w\)}{x_1 - \phi_u^{i_0 \dots i_n} \(w_1,\phi_u^{i_n i_{n+1}} \(x_1,z_2\)\)} \right. \\ & \quad \left. \frac{\partial_2 \phi_s^{i_0 \dots i_n} \(w_1,\phi_u^{i_n i_{n+1}} \(x_1,z_2\)\)}{w_2 - \phi_s^{i_0 \dots i_n} \(w_1,\phi_u^{i_n i_{n+1}} \(x_1,z_2\)\)}  \frac{\partial_2 \phi_s^{i_n i_{n+1}}\(x_1,z_2\)}{w_1 - \phi_u^{i_n i_{n+1}} \(x_1,z_2\)}  \) }{x_1} \\
 & = s_{i_0 \dots i_{n+1}} \frac{ h^{i_n i_{n+1}}\(z, \xi^{i_0 \dots i_{n+1}}\(w_1,z_2\)\) h^{i_0 \dots i_n} \(\xi^{i_0 \dots i_{n+1}}\(w_1,z_2\), w\)}{{1- \partial_2 \phi_u^{i_0 \dots i_n}\(w_1,\xi_2^{i_0 \dots i_{n+1}} \(w_1,z_2\)\) \partial_1 \phi_s^{i_n i_{n+1}} \(\xi_1^{i_0 \dots i_{n+1}}\(w_1,z_2\),z_2\) }} \\ & \quad \frac{\partial_2 \phi_s^{i_0 \dots i_n} \(w_1, \xi_2^{i_0 \dots i_{n+1}} \(w_1,z_2\)\)}{w_2 - \phi_s^{i_0 \dots i_n}\(w_1, \xi_2^{i_0 \dots i_{n+1}}\(w_1,z_2\)\)} \frac{\partial_2 \phi_s^{i_n i_{n+1}} \( \xi_1^{i_0 \dots i_{n+1}}\(w_1,z_2\), z_2\)}{z_1 - \phi_u^{i_n i_{n+1}}\( \xi_1^{i_0 \dots i_{n+1}}\(w_1,z_2\),z_2\)} \\
 & = s_{i_0 \dots i_{n+1}} \frac{h^{i_n i_{n+1}}\(z, \xi^{i_0 \dots i_{n+1}}\(w_1,z_2\)\) h^{i_0 \dots i_n} \(\xi^{i_0 \dots i_{n+1}}\(w_1,z_2\), w\)}{1- \partial_2 \phi_u^{i_0 \dots i_n}\(w_1,\xi_2^{i_0 \dots i_{n+1}} \(w_1,z_2\)\) \partial_1 \phi_s^{i_n i_{n+1}} \(\xi_1^{i_0 \dots i_{n+1}}\(w_1,z_2\),z_2\) } \\ & \quad \frac{\partial_2 \phi_s^{i_0 \dots i_n} \(w_1, \xi_2^{i_0 \dots i_{n+1}} \(w_1,z_2\)\)}{w_2 - \phi_s^{i_0 \dots i_n i_{n+1}}\(w_1, z_2\)} \frac{\partial_2 \phi_s^{i_n i_{n+1}} \( \xi_1^{i_0 \dots i_{n+1}}\(w_1,z_2\), z_2\)}{z_1 - \phi_u^{i_0 \dots i_{n+1}}\( w_1,z_2\)} \\
\end{align*}
où on a utilisé la formule de Cauchy puis la formule des résidus (le pôle est bien simple par le lemme \ref{pf}) et la définition de $ \phi_u^{i_0 \dots i_{n+1}}$ et de $\phi_s^{i_0 \dots i_{n+1}}$. Par ailleurs on note que
\begin{align*}
& h^{i_n i_{n+1}}\(z, \xi^{i_0 \dots i_{n+1}}\(w_1,z_2\)\) h^{i_0 \dots i_n} \(\xi^{i_0 \dots i_{n+1}}\(w_1,z_2\), w\) \\
 & = h \( f^{i_n i_{n+1}} \( \xi_1^{i_0 \dots i_{n+1}}\(w_1,z_2\), \phi_s^{i_n i_{n+1}} \( \xi_1^{i_0 \dots i_{n+1}}\(w_1,z_2\) , z_2 \)\)\) \\ & \quad \prod_{k=1}^n h \(f^{i_0 \dots i_k} \(w_1, \phi_s^{i_0 \dots i_n} \(w_1,\xi_2^{i_0 \dots i_{n+1}}\(w_1,z_2\)\)\)\) \\
 & = h \( \phi_u^{i_n i_{n+1}} \( \xi_1^{i_0 \dots i_{n+1}}\(w_1,z_2\), z_2\) , z_2 \) \prod_{k=1}^n h \(f^{i_0 \dots i_k} \(w_1, \phi_s^{i_0 \dots i_{n+1}} \(w_1,z_2\)\)\) \\
 & = h \( \phi_u^{i_0 \dots i_{n+1}} \(w_1,z_2\), z_2\) \prod_{k=1}^n h \(f^{i_0 \dots i_k} \(w_1, \phi_s^{i_0 \dots i_{n+1}} \(w_1,z_2\)\)\) \\
 & = h \( f^{i_0 \dots i_{n+1}} \(w_1, \phi_u^{i_0 \dots i_{n+1}}\(w_1,z_2\)\)\) \prod_{k=1}^n h \(f^{i_0 \dots i_k} \(w_1, \phi_s^{i_0 \dots i_{n+1}} \(w_1,z_2\)\)\) \\
 & = h^{i_0 \dots i_{n+1}} \(w,z\)
.\end{align*}
En dérivant
\[
\xi_2^{i_0 \dots i_{n+1}} \(w_1,z_2\) = \phi_s^{i_n i_{n+1}} \( \phi_u^{i_0 \dots i_n} \(w_1, \xi_2^{i_0 \dots i_{n+1}} \(w_1,z_2\)\), z_2\)
\]
on obtient
\[
\drond{\xi_2^{i_0 \dots i_{n+1}}}{z_2} \(w_1,z_2\) = \frac{\partial_2 \phi_s^{i_n i_{n+1}} \(\xi_1^{i_0 \dots i_{n+1}}\(w_1,z_2\), z_2\)}{1- \partial_2 \phi_u^{i_0 \dots i_n}\(w_1,\xi_2^{i_0 \dots i_n} \(w_1,z_2\)\) \partial_1 \phi_s^{i_n i_{n+1}} \(\xi_1^{i_0 \dots i_{n+1}}\(w_1,z_2\),z_2\)}
\]
ce qui en dérivant \eqref{def2} donne
\begin{equation}\label{signe}
\partial_2 \phi_s^{i_0 \dots i_{n+1}} \(w_1,z_2\) = \frac{\partial_2 \phi_s^{i_0 \dots i_n} \(w_1,\xi_2^{i_0 \dots i_{n+1}} \(w_1,z_2\)\) \partial_2 \phi_s^{i_n i_{n+1}} \(\xi_1^{i_0 \dots i_{n+1}}\(w_1,z_2\), z_2\) }{1- \partial_2 \phi_u^{i_0 \dots i_n}\(w_1,\xi_2^{i_0 \dots i_{n+1}} \(w_1,z_2\)\) \partial_1 \phi_s^{i_n i_{n+1}} \(\xi_1^{i_0 \dots i_{n+1}}\(w_1,z_2\),z_2\)}
\end{equation}
et des calculs précédents on déduit la formule \eqref{rec}.

\section{Preuve de la formule \texorpdfstring{\eqref{tr}}{e}}\label{a3}
Si $i \in I$ et $n_1,n_2$ sont des entiers naturels on note $e_{n_1,n_2}^i$ l'élément de $\B_i$ défini par
\[
\forall \(z_1,z_2\) \in U_i : e_{n_1,n_2}^i \(z_1,z_2\) = z_1^{-n_1 - 1} z_2^{n_2}
\]
dont la norme est $\frac{\(r_2^i\)^{n_2}}{\(r_1^i\)^{n_1+1}} $, et $e_{n_1,n_2}^{i,*}$ l'élément de son dual défini par 
\[
\forall \phi \in \B_i : e_{n_1,n_2}^{i,*} \(\phi\) = \frac{1}{2i \pi} \s{\partial D_1^i}{\( \frac{1}{2i \pi} \s{\partial D_2^i}{w_1^{n_1} w_2^{-n_2-1} \phi\(w_1,w_2\) }{w_2}\)}{w_1}
.\]
On remarquera que $e_{n_1,n_2}^{i,*} \(e_{n_1,n_2}^i\) = 1$ et que la norme de $e_{n_1,n_2}^{i,*}$ est $\frac{\(r_1^i\)^{n_1+1}}{\(r_2^i\)^{n_2}}$.

Si $i_0, \dots, i_n \in I$ sont tels que $a\(i_0,i_1\) = \dots = a\(i_{n-1},i_n\) = 1$ alors pour tout $w$ dans $\bar{V_{i_0}}$, l'application $z \mapsto G^{i_0 \dots i_n} \(z,w\)$ est holomorphe sur un voisinage de $\bar{U_{i_n}}$ et on peut donc écrire
\begin{equation}\label{somme}
G^{i_0 \dots i_n} \(z,w\) = \sum_{n_1,n_2 \in \N} z_1^{-n_1-1} z_2^{n_2} \Theta_{n_1,n_2}^{i_0 \dots i_n} \(w\)
\end{equation}
où
\[
\Theta_{n_1,n_2}^{i_0 \dots i_n} \(w \) = \frac{1}{2i \pi} \s{\partial D_1^{i_n}}{\( \frac{1}{2i \pi} \s{\partial D_2^{i_n}}{x_1^{n_1} x_2^{-n_2-1} G^{i_0 \dots i_n}\(x,w\)  }{x_2}\)}{x_1}
.\]
La formule des résidus donne alors
\[
\Theta_{n_1,n_2}^{i_0 \dots i_n} \(w \) = \frac{s_{i_0 \dots i_n}}{2i \pi} \s{\partial D_2^{i_n}}{ h^{i_0 \dots i_n} \( \( \phi_u^{i_0 \dots i_n} \(w_1,x_2\),x_2\),w\) \frac{\partial_2 \phi_s^{i_0 \dots i_n} \(w_1,x_2\) \(\phi_u^{i_0 \dots i_n} \(w_1,x_2\)\)^{n_1}}{\(w_2 - \phi_s^{i_0 \dots i_n} \(w_1,x_2\) \)x_2^{n_2+1}}}{x_2}
.\]
Comme $\phi_s^{i_0 \dots i_n}$ envoie $\bar{D_1^{i_0}} \times \bar{D_2^{i_n}}$ dans $\bul{D_2^{i_0}}$, pour $\epsilon$ suffisamment petit, $\phi_s^{i_0 \dots i_n}$ envoie $\bar{D_1^{i_0}} \times \bar{D_2^{i_n, \epsilon}}$ dans $\bul{D_2^{i_0}}$, où $D_2^{i_n, \epsilon}$ désigne le disque de centre $0$ et de rayon $\(1+ \epsilon\) r_2^{i_n}$ et on a alors 
\[
\Theta_{n_1,n_2}^{i_0 \dots i_n} \(w \) = \frac{s_{i_0 \dots i_n}}{2i \pi} \s{\partial D_2^{i_n, \epsilon}}{ h^{i_0 \dots i_n} \( \( \phi_u^{i_0 \dots i_n} \(w_1,x_2\),x_2\),w\) \frac{\partial_2 \phi_s^{i_0 \dots i_n} \(w_1,x_2\) \(\phi_u^{i_0 \dots i_n} \(w_1,x_2\)\)^{n_1}}{\(w_2 - \phi_s^{i_0 \dots i_n} \(w_1,x_2\) \)x_2^{n_2+1}}}{x_2}
\] 
puisque $\partial D_2^{i_n} $ et $\partial D_2^{i_n, \epsilon} $ sont homotopes dans l'ensemble de définition de l'intégrande.

Procédant au changement de variable \og $\tilde{w_2} = \phi_s^{i_0 \dots i_n} \(w_1,x_2\)$ \fg{} (on a alors $x_2 = f_2^{i_0 \dots i_n} \(w_1, \tilde{w_2}\)$ et $ \phi_u^{i_0 \dots i_n} \(w_1,x_2\) = f_1^{i_0 \dots i_n} \(w_1,\tilde{w_2}\)$), nous obtenons
\[
\Theta_{n_1,n_2}^{i_0 \dots i_n} \(w \) = \frac{s_{i_0 \dots i_n}}{2i \pi} \s{\Gamma\(w_1\)}{h^{i_0 \dots i_n}\(f^{i_0 \dots i_n}\(w_1,\tilde{w_2}\), w\) \frac{\(f_1^{i_0 \dots i_n}\(w_1, \tilde{w_2}\)\)^{n_1}}{\(w_2 - \tilde{w_2}\) \(f_2\(w_1,\tilde{w_2}\)\)^{n_2+1}} }{\tilde{w_2}}
\]
où $\Gamma\(w_1\)$ est l'image de $\partial D_2^{i_n, \epsilon}$ par $x_2 \mapsto \phi_s^{i_0 \dots i_n} \(w_1,x_2\)$ .

$f_1^{i_0 \dots i_n}$ envoie $\bar{D_1^{i_0}} \times \phi_s^{i_0 \dots i_n} \( \bar{D_1^{i_0}} \times \bar{D_2^{i_n}}\)$ dans $\bul{D_1^{i_n}}$ et donc $f_1^{i_0 \dots i_n}$ envoie $\bar{D_1^{i_0}} \times \phi_s^{i_0 \dots i_n} \( \bar{D_1^{i_0}} \times \bar{D_2^{i_n, \epsilon}}\)$ dans $\bul{D_1^{i_n}}$ pour $\epsilon$ assez petit, on note alors
\[
\theta = \frac{1}{r_1^{i_n}} \sup_{\bar{D_1^{i_0}} \times \phi_s^{i_0 \dots i_n} \( \bar{D_1^{i_0}} \times \bar{D_2^{i_n, \epsilon}}\)} |f_1| < 1
.\]

En notant $\delta > 0$ la distance entre les compacts $\phi_s^{i_0 \dots i_n} \( \bar{D_1^{i_0}} \times \bar{D_2^{i_n, \epsilon}}\)$ et $\hat{\C} \setminus \bul{D_2^{i_n}}$ et $M$ une majoration de $h$ sur les rectangles considérées, on obtient la majoration
\begin{align*}
\sup_{\bar{V_{i_0}}} \left| \Theta_{n_1,n_2}^{i_0 \dots i_n} \right| & \leqslant \frac{\left| \Gamma\(w_1\) \right|}{2 \pi} M^n \frac{\(\theta r_1^{i_n}\)^{n_1}}{\delta \( \(1+ \epsilon\) r_2^{i_n} \)^{n_2+1}} \leqslant C \frac{\(\theta r_1^{i_n}\)^{n_1}}{\( \(1+ \epsilon\) r_2^{i_n} \)^{n_2+1}}
\end{align*}
où la constante $C$ ne dépend pas de $n_1$ et $n_2$.

On note alors $\Theta_{n_1,n_2}^{i_0 \dots i_n, *}$ la forme linéaire définie sur $\B_{i_0}$ par
\[
\Theta_{n_1,n_2}^{i_0 \dots i_n, *} \( \phi\) = \frac{1}{2i \pi} \s{\partial D_1^{i_0}}{ \( \frac{1}{2i \pi} \s{\partial D_2^{i_0}}{\Theta_{n_1,n_2}^{i_0 \dots i_n} \(w_1,w_2\) \phi\(w\)}{w_2}\)}{w_1} 
\]
et on a l'estimation
\[
\|\Theta_{n_1,n_2}^{i_0 \dots i_n, *}\| \leqslant C'  \frac{\(\theta r_1^{i_n}\)^{n_1}}{\( \(1+ \epsilon\) r_2^{i_n} \)^{n_2+1}}
\]
où la constante $C'$ ne dépend pas de $n_1$ et $n_2$. La série
\[
\sum_{n_1,n_2 \in \N} \Theta_{n_1,n_2}^{i_0 \dots i_n, *} \otimes e_{n_1,n_2}^{i_n}
\]
converge donc vers un noyau $\frac{2}{3}$-sommable. La même estimation nous permet d'intervertir l'intégrale de la définition \eqref{def3} et la somme de la formule \eqref{somme} et prouve que ce noyau représente $ \L^{i_0 \dots i_n} $ qui est donc $\frac{2}{3}$-sommable.

Dans le cas particulier où $i_0 = i_n$, l'opérateur $\L^{i_0 \dots i_n}$ est un endomorphisme $\frac{2}{3}$-sommable de $\B_{i_0}$ et 
\begin{align*}
\tr{\L^{i_0 \dots i_n}} & = \sum_{n_1,n_2 \in \N} \Theta_{n_1,n_2}^{i_0 \dots i_n, *} \( e_{n_1,n_2}^{i_n} \) \\
   & = \sum_{n_1,n_2 \in \N} \frac{1}{2i \pi} \s{\partial D_1^{i_0}}{ \( \frac{1}{2i \pi} \s{\partial D_2^{i_0}}{\Theta_{n_1,n_2}^{i_0 \dots i_n} \(w_1,w_2\) w_1^{-n_1-1}w_2^{n_2}}{w_2}\)}{w_1} \\
   & = \frac{s_{i_0 \dots i_n}}{2i \pi} \s{\partial D_1^{i_0}}{ \( \frac{1}{2i \pi} \s{\partial D_2^{i_0}}{\sum_{n_1,n_2 \in \N} \(\frac{1}{2i \pi} \s{\Gamma\(w_1\)}{h^{i_0 \dots i_n}\(f^{i_0 \dots i_n}\(w_1,\tilde{w_2}\), w\)  \right. \right. \\ & \quad \left. \left. \frac{\(f_1^{i_0 \dots i_n}\(w_1, \tilde{w_2}\)\)^{n_1}}{\(w_2 - \tilde{w_2}\) \(f_2\(w_1,\tilde{w_2}\)\)^{n_2+1}} }{\tilde{w_2}}\)w_1^{-n_1-1} w_2^{n_2}}{w_2}\)}{w_1} \\
   & = \frac{s_{i_0 \dots i_n}}{2i \pi} \s{\partial D_1^{i_0}}{ \( \frac{1}{2i \pi} \s{\partial D_2^{i_0}}{ \(\frac{1}{2i \pi} \s{\Gamma\(w_1\)}{h^{i_0 \dots i_n}\(f^{i_0 \dots i_n}\(w_1,\tilde{w_2}\), w\)  \right. \right. \\ & \quad \left. \left. \sum_{n_1,n_2 \in \N} \frac{\(f_1^{i_0 \dots i_n}\(w_1, \tilde{w_2}\) w_1^{-1} \)^{n_1}}{\(w_2 - \tilde{w_2}\) \(f_2\(w_1,\tilde{w_2}\)w_2^{-1} \)^{n_2+1}} }{\tilde{w_2}}\)\frac{1}{w_1 w_2}}{w_2}\)}{w_1} \\
   & = \frac{s_{i_0 \dots i_n}}{2i \pi} \s{\partial D_1^{i_0}}{ \( \frac{1}{2i \pi} \s{\partial D_2^{i_0}}{ \(\frac{1}{2i \pi} \s{\Gamma\(w_1\)}{h^{i_0 \dots i_n}\(f^{i_0 \dots i_n}\(w_1,\tilde{w_2}\), w\) \right. \right. \\ & \quad \left. \left. \frac{1}{w_1 - f_1^{i_0 \dots i_n} \(w_1,\tilde{w_2}\)} \frac{1}{f_2^{i_0 \dots i_n} \(w_1, \tilde{w_2}\)-w_2} \frac{1}{w_2 -\tilde{w_2}}}{\tilde{w_2}}\)}{w_2}\)}{w_1}  \\
   & = \frac{s_{i_0 \dots i_n}}{2i \pi} \s{\partial D_1^{i_0}}{ \( \frac{1}{2i \pi} \s{\Gamma\(w_1\)}{ \(\frac{1}{2i \pi} \s{\partial D_2^{i_0}}{h^{i_0 \dots i_n}\(f^{i_0 \dots i_n}\(w_1,\tilde{w_2}\), w\) \right. \right. \\ & \quad \left. \left. \frac{1}{w_1 - f_1^{i_0 \dots i_n} \(w_1,\tilde{w_2}\)} \frac{1}{f_2^{i_0 \dots i_n} \(w_1, \tilde{w_2}\)-w_2} \frac{1}{w_2 -\tilde{w_2}}}{w_2}\)}{\tilde{w_2}}\)}{w_1} \\
   & = \frac{s_{i_0 \dots i_n}}{2i \pi} \s{\partial D_1^{i_0}}{ \( \frac{1}{2i \pi} \s{\Gamma\(w_1\)}{ h^{i_0 \dots i_n}\(f^{i_0 \dots i_n}\(w_1,\tilde{w_2}\), w\) \right.  \\ & \quad \left.  \frac{1}{w_1 - f_1^{i_0 \dots i_n} \(w_1,\tilde{w_2}\)} \frac{1}{f_2^{i_0 \dots i_n} \(w_1, \tilde{w_2}\)-\tilde{w_2}} }{\tilde{w_2}}\)}{w_1} .
\end{align*}
En notant, $W_2\(w_1\)$ l'unique point fixe de $w_2 \mapsto \phi_s^{i_0 \dots i_n} \(w_1,w_2\)$, qui est bien défini par le lemme \ref{pf}, on remarque que
\[
|\partial_2 f_2^{i_0 \dots i_n} \(w_1,W_2\(w_1\)\)| = \frac{1}{|\partial_2 \phi_s^{i_0 \dots i_n} \(w_2,W_2\(w_1\)\)|} > 1
\]
et donc par la formule des résidus on a
\begin{align*}
\frac{1}{2i \pi} \s{\Gamma\(w_1\)}{h^{i_0 \dots i_n}\(f^{i_0 \dots i_n}\(w_1,\tilde{w_2}\), w\)  \frac{1}{w_1 - f_1^{i_0 \dots i_n} \(w_1,\tilde{w_2}\)} \frac{1}{\tilde{w_2} - f_2^{i_0 \dots i_n} \(w_1, \tilde{w_2}\)} }{\tilde{w_2}} \\ = h^{i_0 \dots i_n}\(f^{i_0 \dots i_n}\(w_1,W_2\(w_1\)\), \(w_1,W_2\(w_1\)\)\)  \frac{1}{w_1 - f_1^{i_0 \dots i_n} \(w_1,W_2\(w_1\)\)} \frac{1}{1 - \partial_2 f_2^{i_0 \dots i_n}\(w_1,W_2\(w_1\)\)} 
.\end{align*}
En remarquant que 
\[
W_2'\(w_1\) \(1 - \partial_2 f_2^{i_0 \dots i_n} \(w_1,W_2\(w_1\)\) \)= \partial_1 f_2^{i_0 \dots i_n} \(w_1,W_2\(w_1\)\)
\]
et en appliquant de nouveau la formule des résidus (on a bien un pôle simple car le point fixe $x_{i_0 \dots i_n}$ est hyperbolique), il vient
\begin{align*}
\tr{\L^{i_0 \dots i_n}} & = - \frac{s_{i_0 \dots i_n} h^{i_0 \dots i_n} \( f^{i_0 \dots i_n} \(x^{i_0 \dots i_n}\), x^{i_0 \dots i_n} \) }{\(1-\partial_2 f_2^{i_0 \dots i_n} \(x^{i_0 \dots i_n}\)\) \( 1 - \partial_1 f_1^{i_0 \dots i_n} \(x^{i_0 \dots i_n}\) - W_2'\(x_1^{i_0 \dots i_n}\) \partial_2 f_1^{i_0 \dots i_n} \(x^{i_0 \dots i_n}\)\)} \\
    & = - \frac{s_{i_0 \dots i_n} \prod_{k=1}^n h\(f^{i_0 \dots i_k} \(x^{i_0 \dots i_n}\)\)}{\det \(df^{i_0 \dots i_n}\(x^{i_0 \dots i_n}\) - I \)}
.\end{align*}

Or, si $n$ est un entier naturel non-nul, la formule \eqref{puiss} donne
\[
\tr \(\L^n\) = \sum_{ a\(i_0,i_1\) = \dots = a\(i_{n-1},i_n\) = 1 } \tr \(b_{i_n} \circ \L^{i_0 \dots i_n} \circ \pi_{i_0}\)
.\]
On remarque que les opérateurs qui apparaissent dans cette somme sont $\frac{2}{3}$-sommable, que si $i_0 \neq i_n$, l'opérateur $b_{i_n} \circ \L^{i_0 \dots i_n} \circ \pi_{i_0}$ n'a pas de spectre en dehors de $0$ et sa trace est donc nulle et si $i_0$ = $i_n$, cet opérateur a même spectre que $\L^{i_0 \dots i_n}$ et donc même trace. D'où la formule \eqref{tr}.

\section*{Remerciements}
Je remercie Hans Henrik Rugh, Damien Thomine et Viviane Baladi pour leurs commentaires pertinents et leurs relectures attentives.

\bibliography{biblio}

\end{document}